\newtheorem{thm}{Theorem}[section]
\newtheorem{defn}[thm]{Definition}
\newtheorem{prop}[thm]{Proposition}
\newtheorem{lem}[thm]{Lemma}
\newtheorem{rque}{Remark} [section]
\newtheorem{cor}[thm]{Corollary}
\newtheorem{rmq}{Remark}[section]
\newtheorem{assumption}{Assumption}[section]
\DeclareMathOperator{\Hess}{Hess}
\DeclareMathOperator{\Var}{Var}
\DeclareMathOperator{\Ent}{Ent}
\DeclareMathOperator{\argmax}{argmax}
\DeclareMathOperator{\Ch}{Ch}
\DeclareMathOperator{\Lip}{Lip}
\DeclareMathOperator{\aet}{a_{\eta}}
\DeclareMathOperator{\bet}{b_{\eta}}
\newcommand{\R}{\mathbb{R}}
\newcommand{\Q}{\mathfrak{q}}
\DeclareMathSymbol{\Alpha}{\mathalpha}{operators}{"41}\DeclareMathSymbol{\Beta}{\mathalpha}{operators}{"42}\DeclareMathSymbol{\Epsilon}{\mathalpha}{operators}{"45}\DeclareMathSymbol{\Zeta}{\mathalpha}{operators}{"5A}\DeclareMathSymbol{\Eta}{\mathalpha}{operators}{"48}\DeclareMathSymbol{\Iota}{\mathalpha}{operators}{"49}\DeclareMathSymbol{\Kappa}{\mathalpha}{operators}{"4B}\DeclareMathSymbol{\Mu}{\mathalpha}{operators}{"4D}\DeclareMathSymbol{\Nu}{\mathalpha}{operators}{"4E}\DeclareMathSymbol{\Omicron}{\mathalpha}{operators}{"4F}\DeclareMathSymbol{\Rho}{\mathalpha}{operators}{"50}\DeclareMathSymbol{\Tau}{\mathalpha}{operators}{"54}\DeclareMathSymbol{\Chi}{\mathalpha}{operators}{"58}\DeclareMathSymbol{\omicron}{\mathord}{letters}{"6F}
\renewcommand{\bar}{\overline}
\begin{document}

\title{Stability of eigenvalues and observable diameter in RCD$(1, \infty)$ spaces}
\author{Jer\^ome Bertrand and Max Fathi}
\date{\today}

\maketitle

\begin{abstract}
We study stability of the spectral gap and observable diameter for metric-measure spaces satisfying the RCD$(1,\infty)$ condition. We show that if such a space has an almost maximal spectral gap, then it almost contains a Gaussian component, and the Laplacian has eigenvalues that are close to any integers, with dimension-free quantitative bounds. Under the additional assumption that the space admits a needle disintegration, we show that the spectral gap is almost maximal iff the observable diameter is almost maximal, again with quantitative dimension-free bounds. 
\end{abstract}

\section{Introduction}

%{\color{red} en général pour une fonction Lipschitz, $\|nabla f|$ la constante de Lipschitz locale ne coïncide pas nécessairement avec la pente $\nabla f|_*$ utilisée dans la théorie RCD. On a un besoin d'une inégalité de Poincaré locale pour les boules et que la mesure soit doublante.\\
%Max : on peut pas utiliser une telle hypothese, c'est meme pas valable pour la gaussienne. Par contre, on a la Sobolev-to-lipschitz property, qui peut etre appliquée a la fonction définissant les aiguilles.\\
%Jerome: Ok mais cela ne change pas le fait que pour la norme utilisée appliquée à une fonction affine de pente 1, la norme peut être inférieure à 1 }

A classical topic in geometry is to understand the structure of spaces that maximize a given geometric quantity, within a suitable class of spaces. An emblematic example that motivates some of our work here is the Cheeger-Gromoll splitting theorem, which states that manifold with nonnegative Ricci curvature that contain an infinite-length geodesic must split off a line. There are many examples of results with a similar flavor, involving quantities such as the diameter, the spectral gap, isoperimetric-type properties, etc... We shall discuss relevant examples later in this introduction. 

Once such a result is known, it is natural to investigate the structure of spaces that \emph{almost} maximize the geometric quantity, and determine whether they are close in some sense to having the same structure as an extremal space. When investigating classes of compact spaces, a non-explicit stability estimate can often be derived by a compactness argument, via Gromov's precompactness theorem, and showing that a sequence of spaces that saturate in the limit the geometric constraint must converge to an extremal space. See \cite[Theorem 1.5]{CM17a} for an example where this strategy is used. Our work here will deal with a class of non-compact spaces, for which this strategy is unavailable. Moreover, our focus will be on deriving estimates that have a fully explicit dependence on the various parameters (and in particular, making sure the estimates are dimension-free). 

In this work, we shall consider metric measures spaces $(X,d,m)$ which belong to the class of $RCD(1,\infty)$ spaces, normalized so that $m(X)=1$. These spaces are a natural extension of Riemannian spaces satisfying a curvature condition. It is stronger than the Lott-Sturm-Villani curvature dimension condition CD$(1,\infty)$, but more appropriate for the type of questions we investigate here, since it is for example the right setting for the non-smooth generalization of the Cheeger-Gromoll splitting theorem \cite{Gig14}. For some of the results we obtain here, we shall further assume the existence of a $CD(1,\infty)$ disintegration of the metric measure space. Such spaces are studied -for general $K$ and $N$- in \cite{CaMi21} and denoted by $CD^1(K,N)$. These notions will be fully defined in Section \ref{sect_prelim}. 
%This additional property is defined as follows

%\begin{defn}[$CD^1(1,\infty)$] A metric measure space $(X,d,m)$ admits a $CD(1,\infty)$ disintegration if for any $1-$Lipschitz function $\varphi: X \longrightarrow \R$,   there exists a set $Q$ made of geodesics $q$ defined on the interval $X_q \subset \R$, a function $h_q: X_q \longrightarrow \R$ such that the following disintegration holds for any Borel set $B \subset X$:
%$$ m(B) \int_Q \int_{B\cap X_q} \, h_q dt \, d\Q(q),$$
%and for $\Q$-a.e. $q \in Q$, $(X_q, |\cdot|, h_q\, dt)$ is a $CD(1,\infty)$ m.m.s. 
%\end{defn} 
%\begin{rmq} Such spaces are studied -for general $K$ and $N$- in \cite{CaMi21} and denoted by $CD^1(K,N)$.
%\end{rmq}

Examples of metric measures spaces admitting a CD$(1,\infty)$ disintegration are weighted Riemannian manifolds whose Bakry-Emery curvature is greater or equal to one, as proved by Klartag \cite{Kla17}, and $RCD(1,N)$ m.m.s (for any $1<N<+\infty$) as proved by Cavalletti and Mondino in \cite{CM17a}. It seems that an obvious adaptation of the proof \cite[Theorem 3.8]{CM17a} yields that metric measure spaces which are both RCD$(1,\infty)$ and $CD_{loc}(K,N)$, for $K \in \R$ and $1<N <+\infty$, also meet the $CD^1(1,\infty)$ condition.

In an RCD$(1,\infty)$ space, there is a spectral gap $\lambda_1 \geq 1$ for the first eigenvalue of the Laplacian; note that $\lambda_1=1$ holds true for the Gauss space. In \cite{GKKO}, Gigli, Ketterer, Kuwada, and Ohta proved the following rigidity result:

\begin{thm}\label{GKKO}
Let $(X,d,m)$ be an RCD$(1,\infty)$-space, and assume that $\lambda_1=1$. Then there exists an RCD$(1,\infty)$-space $(Y,d_Y,m_Y)$ such that:

\begin{itemize}

\item The metric space $(X,d)$ is isometric to the product space $(\R,|\cdot|)\times (Y,d_Y)$ with the product metric.

\item Through the isometry above, the  measure $m$ coincides with the product measure $e^{-x^2/2}dx\otimes m_Y$. 
\end{itemize} 
\end{thm}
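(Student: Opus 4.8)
The natural route is to extract an equality case in the Bochner inequality from the eigenfunction equation. Let $f$ be a first eigenfunction, normalized in $L^2$, so that $-\Delta f = \lambda_1 f = f$; since $f\in D(\Delta)$ and $\Delta f=-f\in W^{1,2}$, the second-order calculus of Gigli applies to $f$. Integrating the Bochner identity gives $\int \Gamma_2(f)\,dm = \int (\Delta f)^2\,dm = \int f^2\,dm$, while $\int \Gamma(f)\,dm = -\int f\,\Delta f\,dm = \int f^2\,dm$, so that $\int \Gamma_2(f)\,dm = \int \Gamma(f)\,dm$. Feeding this into the self-improved (Savaré--Gigli) Bochner inequality $\Gamma_2(f)\ \geq\ |\Hess f|_{\mathrm{HS}}^2\,m + \Gamma(f)$ — valid because the curvature lower bound equals $1$ — forces $\Hess f = 0$ $m$-a.e. and equality throughout.

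From $\Hess f=0$ one gets $\nabla|\nabla f|^2 = 2\,\Hess f(\nabla f,\cdot) = 0$, so $|\nabla f|$ is constant on the (connected) space; it is nonzero since $f$ is a nonconstant eigenfunction, and after rescaling $f$, which preserves the eigenvalue equation, we may assume $|\nabla f|\equiv 1$ and $\Delta f = -f$. Testing $-\Delta f = f$ against $\varphi(f)$ for $\varphi\in C_c^\infty(\R)$ and using $\Gamma(f)=1$ yields $\int x\,\varphi(x)\,d(f_\#m) = \int \varphi'(x)\,d(f_\#m)$, which is exactly Stein's characterization of the standard Gaussian; hence $f_\#m = (2\pi)^{-1/2}e^{-x^2/2}\,dx$ and, in particular, $f(X)=\R$. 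Finally, $\nabla f$ is a parallel gradient field, so its gradient flow $(F_t)_{t\in\R}$ is globally defined, consists of isometries of $(X,d)$ — exactly as for the Busemann function in Gigli's splitting theorem, a parallel field being Killing — and satisfies $f\circ F_t = f+t$ because $\tfrac{d}{dt}f(F_t(x)) = |\nabla f|^2 = 1$.

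Set $Y:=f^{-1}(0)$ with the restricted distance $d_Y$ (which I expect to agree with the intrinsic one), and define $\Phi\colon \R\times Y\to X$ by $\Phi(t,y)=F_t(y)$. Since $f$ is surjective and $|\nabla f|=1$, the flow lines foliate $X$ and $\Phi$ is a bijection; that it is an isometry onto the product space then follows from each $F_t$ being an isometry acting as a vertical translation — this is the $RCD$ analogue of the Pythagorean step in Gigli's splitting argument. For the measure, disintegrate $m$ along $f$ as $m=\int_\R m_t\,d(f_\#m)(t)$; the continuity equation for $(F_t)$ with velocity $\nabla f$ and divergence $\mathrm{div}_m(\nabla f)=\Delta f=-f$ gives $(F_t)_\#m = e^{t f + t^2/2}\,m$, which together with $f_\#m$ being the standard Gaussian pins down the conditional measures $m_t$ and yields $\Phi_\#\!\big(e^{-x^2/2}\,dx\otimes m_Y\big)=m$ for a suitable measure $m_Y$ on $Y$ of mass $(2\pi)^{-1/2}$. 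It then remains to observe that $(Y,d_Y,m_Y)$ is itself $RCD(1,\infty)$, which follows from the stability of the $RCD(1,\infty)$ condition under this factorization, the product being $X$ itself.

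I expect the main difficulty to be the fully non-smooth implementation of the third paragraph: producing an everywhere-defined flow of metric isometries from a Sobolev vector field whose divergence is only $L^p$ (Gaussian-tailed, not bounded), showing that its flow lines realize distances and foliate $X$, and identifying the conditional measures of $m$ along the level sets of $f$ with the transported copies of $m_Y$. This is precisely where one must import and adapt the machinery behind Gigli's splitting theorem — second-order calculus on $RCD$ spaces, regular Lagrangian flows, and the $W^{1,2}$ differential structure — whereas the Bochner rigidity of the first paragraph is comparatively soft.
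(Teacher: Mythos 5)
A point of order first: the paper does not prove this statement at all --- it is Theorem \ref{GKKO}, quoted verbatim from Gigli--Ketterer--Kuwada--Ohta \cite{GKKO} --- so there is no internal proof to compare against. Your outline does follow the same broad strategy as the original proof: integrating Bochner against the eigenvalue equation to get $\int |\Hess f|^2\,dm \le \lambda_1(\lambda_1-1)=0$, deducing $\Hess f=0$ and $|\nabla f|$ constant, identifying $f_{\sharp}m$ as the standard Gaussian via the Stein-type integration by parts, and then splitting the space along the (regular Lagrangian) flow of $\nabla f$, in the spirit of Gigli's splitting theorem.

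As a proof, however, the write-up has genuine gaps, which you partly acknowledge. (i) A first eigenfunction on an RCD$(1,\infty)$ space is neither bounded nor globally Lipschitz a priori, so the self-improved Bochner inequality and the identity $\nabla|\nabla f|^2 = 2\Hess f(\nabla f,\cdot)$ cannot be invoked directly; one must first truncate and mollify by the heat semigroup and pass to the limit (compare the approximation step $P_t\max(-n,\min(f,n))$ used in the proof of Lemma \ref{keylemma} of this paper for exactly this reason). (ii) Your entire third paragraph --- existence of the flow of $\nabla f$ as an everywhere-defined one-parameter group of \emph{metric} isometries, the fact that $f^{-1}(0)$ with the restricted distance is the correct factor and that $\Phi(t,y)=F_t(y)$ is an isometry onto the product, the identification of the conditional measures, and the RCD$(1,\infty)$ property of the factor --- is precisely the substance of the GKKO theorem; the assertion that a parallel gradient field is Killing is a smooth-manifold heuristic, and in the non-smooth setting establishing it (via second-order calculus and regular Lagrangian flows) is the hard part, not a citation-free observation. (iii) A small slip: with the normalization $f\circ F_t = f+t$ and $\mathrm{div}_m(\nabla f)=-f$, the transported measure is $(F_t)_{\sharp}m = e^{tf - t^2/2}\,m$, not $e^{tf + t^2/2}\,m$ (check against the model case $m=\gamma$, $f(x)=x$). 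In short, your proposal is a faithful road map of the known argument of \cite{GKKO}, but not a self-contained proof; completing it amounts to reproving that paper.
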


Other rigidity results for RCD$(1,\infty)$ spaces were obtained in \cite{OT20, Han21}. For smooth Riemannian manifolds endowed with their volume measure, the analogous inequality is due to Lichnerowicz and the case of equality -which characterizes the unit sphere- is due to Obata. Obata's theorem has been generalized to RCD$(N-1,N)$-spaces by Ketterer (with $N<\infty)$.

Recall that RCD$(N-1,N)$-spaces have diameter bounded from above by $\pi$ and thus, by Gromov's precompactness theorem, form a compact class of metric measure spaces for the measured Gromov Hausdorff distance. Therefore Ketterer's result also leads to stability results for  RCD$(N-1,N)$-spaces with almost minimal first eigenvalue (or equivalently, almost maximal diameter). In the setting of Riemannian manifolds, the equivalence follows from work of Cheng \cite{Cheng} and Croke \cite{Croke}. No such compactness property is available for the RCD$(1,\infty)$ spaces, which leaves the properties of such spaces with almost minimal first eigenvalue unknown.

Part of our work here will be to study RCD spaces with almost maximal spectral gap, that is RCD$(1,\infty)$ spaces with $\lambda_1 \leq 1 + \epsilon$. Let us recall that an RCD$(1,\infty)$-space has a discrete spectrum \cite[Proposition 6.7]{GMS15}. One of our main results is the following about higher eigenvalues: 

\begin{thm} \label{main_thm_rcd}
Consider an RCD$(1, \infty)$-space, and assume its spectral gap $\lambda_1$ is smaller than $1+\epsilon$ with $\epsilon$ less than some fixed $\epsilon_0$. Then for any $n \geq 1$, $\theta < 1/2$, there is an eigenvalue $-\lambda_n$ of the Laplacian $\Delta$ such that $|\lambda_n - n\lambda_1| \leq C(n, \theta, \epsilon_0)\epsilon^{1/2 - \theta}.$
\end{thm}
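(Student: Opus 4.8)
The plan is to exploit the almost-rigidity of the spectral gap: when $\lambda_1 \le 1+\epsilon$, Theorem~\ref{GKKO}'s rigidity should be quantitatively stable, so the first eigenfunction $f_1$ behaves almost like the coordinate function $x$ on a Gaussian factor. The key heuristic is that on the exact Gauss space $(\R, e^{-x^2/2}dx)$ the Hermite polynomials $H_n$ are eigenfunctions with eigenvalues $n$, and $H_n$ is (up to normalization) the degree-$n$ polynomial in $x = H_1$. So if $f_1$ is close to a normalized Gaussian coordinate, then $H_n(f_1)$ — the $n$-th Hermite polynomial evaluated at $f_1$ — should be an approximate eigenfunction of $\Delta$ with approximate eigenvalue $n\lambda_1$, and the spectral theorem then forces a true eigenvalue nearby.

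Concretely, I would proceed as follows. First, establish a quantitative "almost Gaussian component" statement: from $\lambda_1 \le 1+\epsilon$ deduce that a unit-norm first eigenfunction $f_1$ satisfies $\E[f_1]=0$, $\E[f_1^2]=1$, and that its carré du champ $\Gamma(f_1)$ is close to $1$ in $L^2$ (or that $\Hess f_1$ is small, using the Bochner/$\Gamma_2$ identity which in $RCD(1,\infty)$ reads $\frac12\Delta\Gamma(f_1) - \langle\nabla f_1,\nabla\Delta f_1\rangle \ge \|\Hess f_1\|_{HS}^2 + \Gamma(f_1)$; integrating against $1$ and using $\Delta f_1 = -\lambda_1 f_1$ gives $\int \|\Hess f_1\|^2 + \int\Gamma(f_1) = \lambda_1^2 \le (1+\epsilon)^2$, while $\int \Gamma(f_1) = \lambda_1 \ge 1$, so $\int \|\Hess f_1\|^2_{HS} \le 2\epsilon + \epsilon^2$). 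Thus $f_1$ is "almost linear." Second, define $g_n := H_n(f_1)$ with $H_n$ the (probabilists') Hermite polynomial, and compute $\Delta g_n$ via the diffusion chain rule $\Delta \phi(f_1) = \phi'(f_1)\Delta f_1 + \phi''(f_1)\Gamma(f_1)$; using $\Delta f_1 = -\lambda_1 f_1$, $\Gamma(f_1) = 1 + (\text{small})$, and the Hermite ODE $H_n'' - x H_n' + n H_n = 0$ rewritten appropriately, one gets $\Delta g_n = -n\lambda_1 g_n + R_n$ where the remainder $R_n$ involves $(\Gamma(f_1)-1)H_n''(f_1)$ plus lower-order terms, hence is controlled in $L^2$ by the smallness of $\Gamma(f_1)-1$ together with bounds on the moments $\E[H_n^{(k)}(f_1)^2]$. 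The loss of the exponent $1/2-\theta$ (rather than $1/2$) comes precisely from controlling these polynomial moments of $f_1$: one only knows $f_1$ is close to Gaussian in a weak sense, so higher moments must be bootstrapped via hypercontractivity or a moment comparison, costing an arbitrarily small power of $\epsilon$.

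The final step is the standard spectral argument: if $h$ is a unit vector with $\|\Delta h + \mu h\|_{L^2} \le \delta$ and $\mu > 0$, then the spectrum of $-\Delta$ meets $[\mu-\delta, \mu+\delta]$; moreover, since the spectrum is discrete \cite[Proposition 6.7]{GMS15}, this gives a genuine eigenvalue $\lambda_n$ with $|\lambda_n - n\lambda_1| \le \delta = C(n,\theta,\epsilon_0)\epsilon^{1/2-\theta}$. (One must check $g_n \ne 0$ and estimate $\|g_n\|_{L^2}$ from below away from zero, which again follows from $f_1$ being almost Gaussian so that $\E[H_n(f_1)^2] \approx n!$.)

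The main obstacle I anticipate is obtaining the quantitative moment bounds on $f_1$: controlling $\E[f_1^{2k}]$ (equivalently, $\E[H_n^{(j)}(f_1)^2]$) uniformly enough to bound the remainder $R_n$ in $L^2$. The $L^2$ smallness of $\Hess f_1$ (or of $\Gamma(f_1)-1$) does not immediately give pointwise or high-moment control; one likely needs to combine the log-Sobolev inequality available in $RCD(1,\infty)$ (which yields Gaussian concentration and hence all polynomial moments of $f_1$ bounded by the Gaussian ones up to constants depending only on $\epsilon_0$) with the $L^2$ stability of $\Gamma(f_1)$. Interpolating between "all moments bounded" and "$L^2$-close to constant" is what produces the exponent $1/2-\theta$ and the constant $C(n,\theta,\epsilon_0)$; making this interpolation clean and dimension-free is the technical heart of the argument. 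A secondary subtlety is that $\Gamma(f_1)$ need not be smooth, so the chain-rule computation for $\Delta H_n(f_1)$ must be justified in the weak/Sobolev sense valid in $RCD$ spaces, using that $f_1 \in D(\Delta)$ has sufficient regularity (e.g., $f_1$ is Lipschitz and $\Gamma(f_1) \in W^{1,2}$ by the improved Bochner inequality).
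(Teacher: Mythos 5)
Your proposal follows essentially the same route as the paper's proof: Hermite polynomials of the first eigenfunction as approximate eigenfunctions via the diffusion chain rule, with the remainder $H_n''(f)\,(\Gamma(f)-\lambda_1)$ controlled by combining Bochner-type information with hypercontractive moment bounds and an interpolation that produces the exponent $1/2-\theta$, and then the standard perturbation lemma for operators with discrete spectrum. The only step you leave implicit --- upgrading the integrated Bochner bound to $L^p$-closeness of $\Gamma(f)$ to the constant $\lambda_1$ --- is precisely the paper's Key Lemma, obtained from Savar\'e's pointwise self-improved Bochner inequality together with an $L^p$ Poincar\'e inequality, so your outline is correct and matches the paper.
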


Since by assumption $\lambda_1$ is close to one, we hence prove that positive integers are close to being eigenvalues of the Lpalacian when the spectral gap is almost minimal. The integers appear here as the eigenvalues of the Ornstein-Uhlenbeck operator, which is the Laplacian on the Gaussian space. 

In this setting, higher eigenvalues are stable under measured Gromov-Hausdorff convergence \cite{GMS15}. The argument in that work uses a compactness argument, so it is not explicitly quantitative. Moreover, since the eigenfunctions are not necessarily globally Lipschitz, it does not seem like this type of statement can be immediately deduced from a statement on closeness to a Gaussian factor, such as Theorem \ref{thm_gaussian_component_eigen} below. 

In the Euclidean setting, it is known that the sequence of ordered eigenvalues is bounded from below by the ordered eigenvalues of the Ornstein-Uhlenbeck operator on the same space \cite{Mil18}, as a consequence of Caffarelli's contraction theorem. To our knowledge, an analogous result in the full geometric setting is an open problem at the time of writing. 

In the smooth finite-dimensional setting, Petersen \cite{Pet99} proved that an $n$-dimen\-sional manifold with Ricci curvature bounded from below by $n-1$, whose $(n+1)$-th eigenvalue is close to $n$, is close in the Gromov-Hausdorff sense to a sphere. Aubry \cite{Au05} later showed that the statement still holds for the $n$-th eigenvalue, with quantitative bounds, and that a control on the $(n-1)$-th eigenvalue is not enough. More recently, Takatsu \cite{Tak21} proved convergence of the spectral structure of low-dimensional projections of spheres of high-dimension to Gaussian spaces, which is a particular example to which all the results presented here apply. 

We shall also show that the pushforward of the reference measure by a normalized eigenfunction is close to Gaussian, with dimension-free quantitative bounds, partially extending results of \cite{DPF17,CF20} from the Euclidean setting to the full setting of RCD spaces. Similar questions for manifolds with small deficit in the Bakry-Ledoux isoperimetric inequality were raised in \cite[Remark 7.6]{MO19}, and can be positively answered with the tools we use here. 

\begin{thm} \label{thm_gaussian_component_eigen}
Let $(M,d,\mu)$ be an RCD$(1,\infty)$ probability space, and $f$ be a normalized eigenfunction of $-\Delta$ associated to the eigenvalue $\lambda$. Let $\nu=f_{\sharp} \mu$, and $\gamma$ be the standard one-dimensional Gaussian measure. Then,
$$W_1(\nu, \gamma) \leq  4\times 2^{\lambda/2}\sqrt{\lambda -1}.$$
\end{thm}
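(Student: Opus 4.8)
The plan is to control the Wasserstein distance $W_1(\nu,\gamma)$ by exploiting the eigenfunction equation $-\Delta f = \lambda f$ together with the fact that, since the space is RCD$(1,\infty)$, the associated diffusion semigroup $P_t$ satisfies a gradient contraction estimate with rate $e^{-t}$. The key observation is that a normalized eigenfunction has mean zero, variance one, and the eigenvalue equation forces control on its Lipschitz norm: applying the Bakry--Émery criterion, $|\nabla P_t f|^2 \le e^{-2t} P_t(|\nabla f|^2)$, and using $\int |\nabla f|^2\,d\mu = \lambda \int f^2\,d\mu = \lambda$, one gets that $f$ is, in a suitable integrated sense, Lipschitz with a constant governed by $\sqrt{\lambda}$. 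More precisely, since $P_t f = e^{-\lambda t} f$, differentiating or using the spectral representation yields a pointwise bound of the form $|\nabla f| \le 2^{\lambda/2}\sqrt{\lambda}$ up to constants; this is where the factor $2^{\lambda/2}$ will come from, via an interpolation between $t=0$ and $t=\infty$ of the Mehler-type formula $P_t f = e^{-\lambda t} f$ balanced against the $L^2$ bound on $|\nabla f|$.

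Next I would use the dual (Kantorovich--Rubinstein) formulation: $W_1(\nu,\gamma) = \sup\{\int \varphi\, d\nu - \int \varphi \,d\gamma : \Lip(\varphi)\le 1\}$. Pushing forward, $\int \varphi\,d\nu = \int \varphi\circ f\, d\mu$, and $\varphi\circ f$ is a function on $M$ with $|\nabla(\varphi\circ f)| \le |\nabla f|$. The point is to compare $\mu$-integrals of functions of $f$ against $\gamma$-integrals. A natural route is to invoke the Gaussian concentration / transport inequality valid on RCD$(1,\infty)$ spaces (Talagrand's inequality with constant $1$, equivalently the log-Sobolev inequality), which gives that $\nu$ itself, as the pushforward of $\mu$ under a function with controlled Lipschitz constant, satisfies Gaussian-type concentration. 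But concentration alone won't pin $\nu$ to $\gamma$; instead I would use the eigenfunction equation directly inside the integration by parts: for any smooth $\psi$,
\[
\int \psi'(f)\,|\nabla f|^2\, d\mu = -\int \psi(f)\,\Delta f\, d\mu = \lambda \int f\,\psi(f)\, d\mu.
\]
This is precisely the relation that characterizes $\gamma$ when $\lambda = 1$ and $|\nabla f|^2 \equiv 1$ (Stein's identity $\int \psi' d\gamma = \int x\psi\, d\gamma$). So the strategy is: choose $\psi$ to be (a smooth approximation of) a $1$-Lipschitz test function's antiderivative-type object, write the defect in Stein's identity for $\nu$ as $\int \psi'(f)(|\nabla f|^2 - 1)\,d\mu + (\lambda-1)\int f\psi(f)\,d\mu$, and bound each term: the second by $|\lambda - 1|$ times a moment bound, the first by $\||\nabla f|^2 - 1\|_{L^1(\mu)}$.

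The main obstacle, then, is estimating $\||\nabla f|^2 - 1\|_{L^1(\mu)}$ in terms of $\lambda - 1$. Here I would use the Bochner--Lichnerowicz inequality (the RCD$(1,\infty)$ form of $\Gamma_2 \ge \Gamma$): integrating Bochner's formula against $f$ or a function of $f$, and using $\Delta f = -\lambda f$, $\int f^2 = 1$, one obtains an identity whose error term is exactly $(\lambda-1)$-order, controlling the fluctuation of $|\nabla f|^2$ around its mean, which is $\int |\nabla f|^2 = \lambda \approx 1$. Combining a Poincaré-type or concentration bound on $|\nabla f|^2$ (itself needing the Lipschitz bound $|\nabla f| \lesssim 2^{\lambda/2}\sqrt\lambda$ to convert $L^2$ control to $L^1$ control via interpolation with the $L^\infty$ bound) should yield $\||\nabla f|^2-1\|_{L^1} \lesssim 2^{\lambda/2}\sqrt{\lambda-1}$. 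Feeding this back, together with $|\lambda-1| \le \sqrt{\lambda-1}\cdot\sqrt{\lambda-1}$ absorbed into the same order, and tracking constants carefully through the Mehler interpolation that produces $2^{\lambda/2}$, should deliver the stated bound $W_1(\nu,\gamma) \le 4\cdot 2^{\lambda/2}\sqrt{\lambda-1}$. The delicate points will be (i) justifying the pointwise gradient bound for eigenfunctions on a general RCD space, where one only has the self-improved Bakry--Émery estimate and must argue by semigroup interpolation rather than elliptic regularity, and (ii) making the Stein-type comparison rigorous for merely Lipschitz test functions, which requires an approximation argument and the a priori integrability coming from Gaussian concentration.
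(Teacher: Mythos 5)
Your overall skeleton is the same as the paper's: bound $W_1(\nu,\gamma)$ by a Stein discrepancy, use the eigenvalue equation and the diffusion (chain rule) property to write $\int x g(x)\,d\nu=\lambda^{-1}\int g'(f)\,|\nabla f|^2\,d\mu$, and thereby reduce everything to an $L^1(\mu)$ bound on $|\nabla f|^2-\lambda$ of order $\sqrt{\lambda-1}$. The genuine gap is in how you propose to obtain that central estimate. You rely on a pointwise Lipschitz bound $|\nabla f|\lesssim 2^{\lambda/2}\sqrt{\lambda}$, to be interpolated against an integrated Bochner ($L^2$-type) bound. No such pointwise bound is available: the Bakry--\'Emery gradient estimate only gives $|\nabla f|^2\le 2^{\lambda}P_{(\ln 2)/2}(f^2)$, and $P_t(f^2)$ is in general unbounded, since eigenfunctions of RCD$(1,\infty)$ spaces lie in every $L^p$ but are in general neither bounded nor globally Lipschitz (on the Gauss space they are Hermite polynomials; the paper explicitly points out in the introduction that eigenfunctions need not be globally Lipschitz, and Proposition \ref{integ_eigen} only yields $L^p$ bounds on $|\nabla f|$ for finite $p$). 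Moreover, integrating Bochner against the eigenfunction only gives $\int(\Gamma_2(f)-|\nabla f|^2)\,d\mu=\lambda(\lambda-1)$, i.e.\ $L^2$ control of the Hessian; to convert this into control of the oscillation of $|\nabla f|^2$ one must apply a Poincar\'e-type inequality to $|\nabla f|^2$, whose gradient is $2\Hess f\,\nabla f$, and estimating that gradient in $L^2$ would again require the unavailable $L^\infty$ bound on $|\nabla f|$. So the step ``$L^2$ control plus interpolation with the $L^\infty$ bound gives $\||\nabla f|^2-1\|_{1}\lesssim 2^{\lambda/2}\sqrt{\lambda-1}$'' does not go through as written.

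The way to close the gap --- and what the paper actually does in Lemma \ref{keylemma} --- is to stay entirely in $L^1$, where no $L^\infty$ information is needed: Savar\'e's self-improved Bochner inequality gives the a.e.\ pointwise bound $|\nabla|\nabla f|^2|\le 2|\nabla f|\sqrt{\Gamma_2(f)-|\nabla f|^2}$, so Cauchy--Schwarz with only the integrated quantities $\int\Gamma_2(f)\,d\mu=\lambda^2$ and $\int|\nabla f|^2\,d\mu=\lambda$ yields $\||\nabla|\nabla f|^2\,|\|_1\le 2\lambda\sqrt{\lambda-1}$, and then the $L^1$ (Cheeger-type) Poincar\'e inequality of Proposition \ref{prop_p_poincare}, applied to the function $|\nabla f|^2$, gives $\||\nabla f|^2-\lambda\|_1\le 4\lambda\sqrt{\lambda-1}$; plugging this into the Stein bound and dividing by $\lambda$ gives the theorem (comparing $|\nabla f|^2$ with $\lambda$ rather than with $1$ also lets you avoid your extra term $(\lambda-1)\int f\psi(f)\,d\mu$, though that term is harmless). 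The factor $2^{\lambda/2}$ in the statement comes from the $L^p$ gradient bounds of Proposition \ref{integ_eigen}, not from any pointwise Lipschitz estimate. Your remaining concerns (approximation for Lipschitz test functions, justifying the a.e.\ Bochner self-improvement for eigenfunctions) are handled in the paper by the standard Stein lemma with bounded $\psi,\psi'$ and by a truncation/semigroup approximation, respectively, and are not the essential difficulty.
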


Here $W_1$ stands for the $L^1$ Wasserstein distance. Asymptotic normality for eigenfunctions on manifolds was studied by E. Meckes \cite{Mec09}, and (a variant of) Theorem \ref{thm_gaussian_component_eigen} can be directly deduced from her abstract theorem via our main Lemma \ref{keylemma} below. When the eigenspace associated with the spectral gap is of dimension higher than one, or if there are several eigenvalues close to one, then a multivariate version of \ref{thm_gaussian_component_eigen} holds, and can be proved with the arguments we develop here, as in the Euclidean setting \cite{CF20}. 

A problem left open by this work is a complete stability estimate for Theorem \ref{GKKO}. We would expect a space with almost minimal spectral  gap to be close in some sense to a product space with Gaussian factor, but we have not been able to prove a satisfactory statement showing an approximate splitting at the level of the metric. 

A second part of our work will deal with stability of the observable diameter, motivated by Ketterer's work on Obata's theorem in RCD spaces, as well as \cite{MO19}. While seeking for upper bound on the diameter of an RCD$(1,\infty)$-space is meaningless, Gromov's observable diameter is a good candidate to replace standard diameter. Let us recall its definition.

\begin{defn}[Observable diameter]
The observable diameter $D_{obs}$ of the metric-measure space $(M, d, \mu)$ is a function of $\kappa \in (0, 1)$ defined by 
$$D_{obs}((M,d,\mu); \kappa) := \sup_{f:M \rightarrow \R }\Big\{ \inf_{E \operatorname{Borel}} \left\{ \operatorname{diam}(E); f_{\sharp}\mu(E) \geq 1-\kappa\right\}; f \; 1\operatorname{-Lipschitz}\Big\}.$$
For ease of notation, the observable diameter is also denoted by $D_{obs}(\mu; \kappa)$.
\end{defn}

As proved in Theorem \ref{ObsCompa} below, the observable diameter of an RCD$(1,\infty)$-space is pointwise bounded from above by that of the Gauss space.

We shall prove some results connecting RCD$(1,\infty)$ spaces with almost minimal spectral gap to those with almost maximal observable diameter. More precisely, we prove

\begin{thm} \label{sg_to_diam}
Let $(X,d,\mu)$ be an RCD$(1,\infty)$ space admitting a $CD(1,\infty)$ disintegration in the sense of Assumption \ref{assumpt_needle}, and whose spectral gap satisfies $\lambda_1 \leq 1 + \epsilon$. Then for any $\theta>0$ small enough and $\kappa\geq C(\theta) \epsilon^{1/20-\theta}$, the following inequalities hold 
$$ D_{obs}( \gamma;\kappa-C(\theta)\epsilon^{1/20 - \theta}) \geq D_{obs}( \mu;\kappa - C(\theta)\epsilon^{1/20 - \theta}) \geq D_{obs}( \gamma;\kappa).$$

\end{thm}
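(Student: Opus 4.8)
The plan is to prove the two inequalities in Theorem~\ref{sg_to_diam} separately, as they go in opposite directions and rely on different ingredients. The right-hand inequality, $D_{obs}(\mu;\kappa - C(\theta)\epsilon^{1/20-\theta}) \geq D_{obs}(\gamma;\kappa)$, should be the ``soft'' direction: the idea is to exploit the fact (from Theorems~\ref{main_thm_rcd} and \ref{thm_gaussian_component_eigen}) that almost-maximal spectral gap forces the space to almost contain a Gaussian factor. Concretely, I would take a normalized eigenfunction $f$ associated to $\lambda_1 \leq 1+\epsilon$; it is $1/\sqrt{\lambda_1}$-Lipschitz (since eigenfunctions of $-\Delta$ with eigenvalue $\lambda$ on an RCD$(1,\infty)$ space satisfy $\mathrm{Lip}(f) \le \sqrt{\lambda}\,\|f\|_\infty$-type bounds, but more simply the gradient estimate gives $\|\nabla f\|_\infty \le \sqrt{\lambda_1}$ after normalization via Bakry--Ledoux), and after rescaling by a factor $\sqrt{\lambda_1} \le \sqrt{1+\epsilon}$ we get a genuine $1$-Lipschitz function whose pushforward is $W_1$-close to $\gamma$ by Theorem~\ref{thm_gaussian_component_eigen}, at distance $O(\sqrt{\lambda_1-1}) = O(\sqrt{\epsilon})$. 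Since $D_{obs}$ is defined as a supremum over $1$-Lipschitz functions, this particular $f$ already witnesses a large separation interval for $\mu$; one then needs an elementary lemma translating $W_1$-closeness of one-dimensional pushforwards into comparison of the ``inf over Borel sets of prescribed mass of the diameter'' quantity, at the cost of shifting $\kappa$ by an amount controlled by the $W_1$ distance divided by a length scale --- this is where a loss appears and where the rescaling by $\sqrt{1+\epsilon}$ must be absorbed into the $\kappa$-shift. The extraction of the exponent $1/20$ (rather than $1/2$) presumably comes from needing uniform-in-scale control, forcing one to work at a length scale depending on $\epsilon$.

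The left-hand inequality, $D_{obs}(\gamma;\kappa') \geq D_{obs}(\mu;\kappa')$ with $\kappa' = \kappa - C(\theta)\epsilon^{1/20-\theta}$, is a strengthening of the pointwise comparison $D_{obs}(\mu;\cdot) \leq D_{obs}(\gamma;\cdot)$ asserted in Theorem~\ref{ObsCompa}; in fact as stated it is just that comparison applied at the parameter $\kappa'$, so the content here is really that $\kappa' \in (0,1)$ is a legitimate parameter, i.e. that $\kappa \ge C(\theta)\epsilon^{1/20-\theta}$ keeps $\kappa'$ positive, plus the monotonicity needed to sandwich. Here is where the needle/$CD^1(1,\infty)$ disintegration hypothesis (Assumption~\ref{assumpt_needle}) enters: for any $1$-Lipschitz test function on $X$, one disintegrates $\mu$ into one-dimensional needles each satisfying a $CD(1,\infty)$ condition, compares the conditional measure on each needle to a Gaussian (one-dimensional isoperimetric/concentration comparison, as in Cavalletti--Mondino and Klartag), and integrates back to conclude that the concentration function of $\mu$ is dominated by that of $\gamma$; the observable-diameter comparison follows since $D_{obs}$ is, up to a reparametrization, an inverse to the concentration function. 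So the real work is: (i) the $W_1$ bound plus Lipschitz eigenfunction giving the lower bound on $D_{obs}(\mu)$; (ii) a clean ``$W_1$ on the line $\Rightarrow$ separation-at-mass-$1-\kappa$'' transfer lemma with explicit $\kappa$-loss; (iii) the needle-based concentration comparison for the upper bound.

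The step I expect to be the main obstacle is (i)--(ii) combined: converting the $W_1$ estimate of Theorem~\ref{thm_gaussian_component_eigen} into a statement about the observable diameter with the correct bookkeeping of parameters. The observable diameter is governed by the \emph{behavior at the tails} (it asks for a Borel set of mass $1-\kappa$ with small diameter, i.e. it probes how spread out the bulk is), whereas $W_1$ is an $L^1$-type global quantity that does not by itself control tails --- two measures can be $W_1$-close yet one have much heavier tails, which would ruin a naive comparison. To get around this I would not try to compare tails directly; instead I would use the eigenfunction $f$ itself as the distinguished $1$-Lipschitz map and observe that $D_{obs}(\mu;\kappa) \ge \inf\{\mathrm{diam}(E): \nu(E)\ge 1-\kappa\}$ for $\nu = f_\sharp\mu$, then lower-bound the latter using only that $\nu$ is $W_1$-close to $\gamma$: if $\nu(E) \ge 1-\kappa$ for some interval $E$, then by a $W_1$-Markov argument $\gamma$ puts mass at least $1-\kappa - W_1(\nu,\gamma)/\delta$ on the $\delta$-neighborhood of $E$, forcing $\mathrm{diam}(E) + 2\delta$ to be at least the corresponding Gaussian separation $D_{obs}(\gamma; \kappa + W_1/\delta)$; optimizing $\delta$ balances $\delta$ against $W_1/\delta \sim \sqrt{\epsilon}/\delta$, giving a net loss of order $\sqrt{\epsilon^{1/2}} = \epsilon^{1/4}$ in the worst balancing --- and chaining this with the factor-$\sqrt{1+\epsilon}$ rescaling and the $\theta$-room needed in Theorem~\ref{main_thm_rcd}'s constants is what I expect to degrade the exponent down to $1/20 - \theta$. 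I would also need to double-check that one-sided intervals, not just symmetric ones, are handled, since the Gaussian observable diameter is realized by half-lines; this is routine but must be done carefully to keep the constants dimension-free.
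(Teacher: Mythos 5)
Your plan founders at step (i): on an RCD$(1,\infty)$ space the eigenfunction $f$ is \emph{not} globally Lipschitz, and no Bakry--Ledoux-type bound gives $\||\nabla f|\|_\infty\le\sqrt{\lambda_1}$. The gradient estimate $|\nabla P_tf|^2\le(e^{2t}-1)^{-1}P_t(f^2)$ only controls $|\nabla f|$ pointwise by $P_t(f^2)$, which is unbounded since $f$ is merely in every $L^p$ (Proposition \ref{integ_eigen} gives $\||\nabla f|\|_p\le(4p-2)^\lambda$, blowing up as $p\to\infty$), and Lemma \ref{keylemma} controls $|\nabla f|^2-\lambda$ only in $L^p$ for finite $p$; the paper itself points out that eigenfunctions need not be globally Lipschitz. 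So you cannot use $f$ (rescaled or not) as a competitor in the supremum defining $D_{obs}(\mu;\cdot)$, and this is exactly where Assumption \ref{assumpt_needle} is needed -- not, as you place it, in the left-hand inequality, which is nothing but Theorem \ref{ObsCompa} (a consequence of the Ambrosio--Mondino isoperimetric inequality, no needles required) evaluated at the shifted parameter. Tellingly, your argument for the substantive right-hand inequality never invokes the disintegration hypothesis at all.

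The paper's route is to take the needle decomposition associated with $f$ and use its guiding function $g$, which is $1$-Lipschitz by construction, as the distinguished test function. The bulk of the work (Lemmas \ref{EstNeed1}, \ref{fg-clo}, \ref{teclem2} and the $H^1$-closeness theorem) shows $\int(f-g)^2d\mu\le C(\theta)\epsilon^{1/10-\theta}$ and $\int|\nabla f-\nabla g|^2d\mu\le C(\theta)\epsilon^{1/20-\theta}$, via the one-dimensional Bakry--\'Emery stability of \cite{CF20} applied needle by needle; this is where the exponent $1/20$ actually comes from, not from a $W_1$-versus-tail optimization. Then the Stein argument of Theorem \ref{thm_gaussian_component_eigen}, run with $g$ in place of $f$, yields closeness of $g_\sharp\mu$ to $\gamma$ in \emph{total variation} (Corollary \ref{cor_dist_proj_lip}), and TV closeness transfers directly to the observable diameter with a pure $\kappa$-shift and no diameter loss (Lemma \ref{lem_obs_tv}), combined with $D_{obs}(\mu;\cdot)\ge D_{obs}(g_\sharp\mu;\cdot)$. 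Your proposed $W_1$-Markov transfer lemma in step (ii) is a reasonable idea in isolation (it would give $\operatorname{diam}(E)\ge D_{obs}(\gamma;\kappa+W_1/\delta)-2\delta$ and one could trade the $2\delta$ for a further $\kappa$-shift using the lower bound \eqref{sigEst} on the Gaussian density, at the price of $\kappa$-dependent constants), but it cannot rescue the argument without a $1$-Lipschitz function whose pushforward is close to $\gamma$, and producing that function is precisely the content the proposal is missing.
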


The assumption on the $CD(1,\infty)$ disintegration in this result could be relaxed, in that we only need a disintegration with respect to one particular test function, namely an eigenfunction associated with the spectral gap. 

This theorem can be compared with \cite[Theorem 4.4]{CMS19}, which proved that under a CD$(N-1,N)$ condition if the spectral gap is almost minimal then the diameter is almost maximal, with dimension-dependent exponent and constants.

Finally, we also obtain a converse result, deducing closeness of the spectral gap from closeness of the observable diameter: 

\begin{thm}\label{thm_dobs_to_sg}
Let $(X,d,\mu)$ be an RCD$(1,\infty)$ space admitting a $CD(1,\infty)$ disintegration in the sense of Assumption \ref{assumpt_needle}, and assume that
$$D_{obs}((M,d,\mu); \kappa) \geq D_{obs}((\R, |\cdot|, \gamma); \kappa)     -\epsilon$$
% & \leq   Sep((M,d,\mu); \kappa/2)
for a given $0<\kappa<1$ and $\epsilon>0$ small enough. Then there exists $C = C(\kappa)$ such that the spectral gap $\lambda_1$ satisfies
$$\lambda_1 \leq 1 + C\epsilon^{1/22}.$$

\end{thm}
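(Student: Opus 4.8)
The plan is to prove the contrapositive-flavored statement directly: assume the observable diameter is almost maximal, and deduce that the spectral gap is almost minimal. The natural strategy is to exploit the $CD(1,\infty)$ disintegration (Assumption \ref{assumpt_needle}) to reduce the problem to a family of one-dimensional densities along needles, and to use the almost-maximality of $D_{obs}$ to locate a $1$-Lipschitz function $f$ whose pushforward $f_\sharp \mu$ is spread out almost as much as a Gaussian at level $\kappa$. First I would unwind the definition of $D_{obs}$: there exists a $1$-Lipschitz $f$ and a Borel set $E$ with $f_\sharp\mu(E)\geq 1-\kappa$ and $\operatorname{diam}(E) \geq D_{obs}(\gamma;\kappa)-\epsilon$. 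Normalizing (subtracting the median and noting $f_\sharp\mu$ concentrates at scale $O(1)$ by the Gaussian comparison of Theorem \ref{ObsCompa}), I would argue that $f_\sharp\mu$ must have tails at least as heavy as those of $\gamma$ near the relevant quantile, so its variance is bounded below by roughly $1 - c(\kappa)\epsilon^{a}$ for an explicit $a$. Since $f$ is $1$-Lipschitz and hence in the Dirichlet form domain with $\Gamma(f)\leq 1$, the Rayleigh quotient gives $\lambda_1 \leq \int \Gamma(f)\,d\mu / \Var_\mu(f) \leq 1/\Var_\mu(f) \leq 1 + C(\kappa)\epsilon^{a}$, modulo recentering issues and the fact that $f$ need not have zero mean (handled by replacing $f$ by $f - \int f\,d\mu$, which only helps).

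The delicate point is controlling $\Var_\mu(f)$ from below using only information about one quantile of $f_\sharp\mu$ together with the a priori Gaussian-type upper bounds available in the RCD$(1,\infty)$/needle setting. Here I would invoke the disintegration: along $\mu$-a.e. needle the conditional density is a $CD(1,\infty)$ density on an interval, hence log-concave up to the Gaussian weight, so each conditional measure satisfies a Poincaré inequality with constant $1$ and a Gaussian concentration bound. This lets me (i) bound the upper tail of $f_\sharp\mu$ by a Gaussian tail, so that knowing $\operatorname{diam}(E)$ is close to the Gaussian value at level $\kappa$ forces the $(1-\kappa)$-mass of $f_\sharp\mu$ to nearly fill the corresponding Gaussian interval; and (ii) convert "nearly filling a Gaussian interval" into "variance $\geq 1 - C(\kappa)\epsilon^{a}$". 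Step (ii) is essentially a one-dimensional stability statement: among probability measures on $\R$ with sub-Gaussian tails whose $\kappa$-concentration interval has length close to maximal, the variance is close to $1$; this should follow from a compactness-free interpolation/truncation argument, tracking the exponent carefully. I expect the exponent $1/22$ to come out of composing the exponent losses in these reductions (a Gaussian-tail truncation at scale $\sim |\log\epsilon|$, a concentration-function estimate, and the quantitative one-dimensional stability), mirroring the $1/20$ that appears in Theorem \ref{sg_to_diam}.

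The main obstacle, and the step I would spend the most care on, is making the one-dimensional stability estimate (step (ii)) fully quantitative and dimension-free while only assuming control at a \emph{single} level $\kappa$ rather than uniformly in $\kappa$. Knowing that $f_\sharp\mu$ has a large $\kappa$-concentration interval does not by itself pin down the shape of the distribution; one must combine it with the Gaussian upper bound on $D_{obs}(\mu;\cdot)$ at \emph{all} levels (which does hold, by Theorem \ref{ObsCompa}) to sandwich the concentration function of $f_\sharp\mu$ between that of $\gamma$ and something close to it on a whole range of levels, and only then integrate to recover the variance. A clean way to organize this is: (a) show $\alpha_{f_\sharp\mu}(r) \leq \alpha_\gamma(r)$ for all $r$ from Theorem \ref{ObsCompa} applied with the same $f$; (b) show $\alpha_{f_\sharp\mu}(r_0) \geq \alpha_\gamma(r_0) - c\epsilon$ at the specific radius $r_0 \approx \tfrac12 D_{obs}(\gamma;\kappa)$ coming from the hypothesis; (c) by the log-concave-type rigidity of the concentration function (it cannot drop sharply away from $r_0$ without violating (a) somewhere), propagate the near-equality to a neighborhood of $r_0$; (d) integrate $\Var = \int_0^\infty 2r\,\alpha(r)\,dr$-type formulas to conclude $\Var_\mu(f) \geq 1 - C(\kappa)\epsilon^{a}$. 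Finally, plug into the Rayleigh quotient. The bookkeeping in (c)–(d) — in particular extracting a power of $\epsilon$ rather than a power of $|\log\epsilon|^{-1}$, which is where the small exponent $1/22$ is paid — is where the real work lies; everything else is a direct application of the needle machinery and the variational characterization of $\lambda_1$.
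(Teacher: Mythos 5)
Your reduction to a variance lower bound via the Rayleigh quotient is the right final step (it is also how the paper concludes), but the core claim you rely on is false: knowing that $\nu=f_{\sharp}\mu$ has Gaussian-dominated concentration at \emph{all} levels and nearly saturates it at the \emph{single} level $\kappa$ does not force $\Var(\nu)\geq 1-C(\kappa)\epsilon^{a}$. Concretely, take $\mu=\gamma$ itself (so the hypothesis holds with $\epsilon=0$) and $f=T$ the truncation map $T(x)=\max(-M,\min(x,M))$ with $M=2|\sigma^{-1}(\kappa/2)|-|\sigma^{-1}(\kappa)|$. Then $T$ is $1$-Lipschitz, $\nu=T_{\sharp}\gamma$ inherits the full Gaussian isoperimetric/concentration comparison (your point (a)), and one checks that every Borel set of $\nu$-mass $\geq 1-\kappa$ has diameter $\geq 2|\sigma^{-1}(\kappa/2)|$, so $T$ is a witness of the (maximal) observable diameter at level $\kappa$; moreover $\alpha_{\nu}=\alpha_{\gamma}$ on a whole neighborhood of your radius $r_0$, so steps (b)–(c) hold with equality. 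Yet $\Var(\nu)=1-2\int_M^\infty (x^2-M^2)\,d\gamma(x)$ is smaller than $1$ by a constant depending only on $\kappa$, not on $\epsilon$. The reason is that your step (d) needs the concentration profile on the whole half-line, while the hypothesis pins it down only near $r_0$ and the comparison (a) goes in the wrong direction for the tail contribution. So the witnessing Lipschitz function cannot in general serve as the test function, and no argument at the level of $f_{\sharp}\mu$ and its concentration function alone can close this gap. (A minor additional slip: the hypothesis does not give you a set $E$ of large diameter; it says that for some $1$-Lipschitz $f$ \emph{every} set of $f_{\sharp}\mu$-mass $\geq 1-\kappa$ has large diameter.)

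This is precisely where the paper's proof uses structure you discard. It first converts the observable-diameter hypothesis into a separation statement (two sets $A_1,A_2$ of mass $\geq\kappa/2$ at distance $\geq Sep(\gamma;\kappa/2)-2\epsilon$, via Theorem \ref{ObsCompa}), then takes the needle decomposition associated with $f=\chi_{A_1}-\chi_{A_2}-\mu(A_1)+\mu(A_2)$. On most needles the two sets retain mass $\approx\kappa/2$ and remain almost maximally separated, and the one-dimensional analysis (Lemmas \ref{closemass}--\ref{VarEsti1D}) exploits the $CD(1,\infty)$ structure of each needle — $1$-convexity of the potential $\phi$, convexity of $\phi-\phi_{\gamma}$, and Caffarelli's contraction theorem — to show $\phi$ is uniformly close to $\phi_{\gamma}$ on a long interval, hence $\Var_{m_q}(s)\geq 1-C\epsilon^{1/22}$. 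This log-concavity input is exactly what rules out truncation-type examples like the one above; mere Gaussian concentration does not. Finally, the Rayleigh test function is not the witnessing $f$ but the \emph{guiding function} $u$ of the needle decomposition, which is $1$-Lipschitz and equals $s+c_q$ along each needle, so its conditional variances are the needle variances and $\Var_\mu(u)\geq\int \Var_{m_q}(s)\,d\Q(q)\geq 1-C\epsilon^{1/22}$. To repair your proposal you would need to replace the concentration-function sandwich (c)–(d) by this needle-wise potential comparison, or some equivalent use of the disintegration's convexity structure.
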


As a corollary, this implies rigidity of the observable diameter, under the extra Assumption \ref{assumpt_needle}, via the main Theorem of \cite{GKKO}. This statement seems to be new. 

The remainder of the article is as follows: in Section 2, we present some necessary preliminaries on RCD spaces and needle decompositions. In Section 3, we prove Theorems \ref{main_thm_rcd} and \ref{thm_gaussian_component_eigen}, and Section 4 contains the proofs of Theorems \ref{sg_to_diam} and \ref{thm_dobs_to_sg}. 

%%%%%%%%%%%%%%%%%%%%%%%%%%%%%
%%%%%%%%%%%%%%%%%%%%%%%%%%%%%%%
\section{Preliminaries}
\label{sect_prelim}

\subsection{RCD spaces}

\subsubsection{Definitions}

In this work, we shall always work with separable, complete metric spaces, which we shall denote by $(M, d)$. We shall endow it with a Borel probability measure $\mu$. Additionally, we shall assume the metric space is \emph{geodesic}: every pair of points $x, y$ are connected by a minimal geodesic $\gamma : [0, 1] \longrightarrow M$ such that $\gamma(0) = x$, $\gamma(1) = y$ and $d(\gamma(s), \gamma(t)) = |t-s|d(x,y)$ for all $s, t \in [0, 1]$. 

We can endow the space of Borel probability measures on $M$ with finite second order moment, denoted by $\mathcal{P}_2(M)$, with the $L^2$ Wasserstein distance from optimal transport. We refer to the monograph \cite{Vil03} for the definition of $L^p$ Wasserstein distances. 

The relative entropy functional with respect to $\mu$ on $\mathcal{P}_2(M)$ is defined as
$$\Ent_{\mu}(\nu) := \int{\rho \log \rho d\mu}$$
if $\nu = \rho \mu$ is an absolutely continuous probability measure, and $+ \infty$ otherwise. 

With these objects in mind, we can define the curvature-dimension condition CD$(K, \infty)$: 

\begin{defn}
The space $(M, d, \mu)$ is said to satisfy the curvature-dimension condition CD$(K, \infty)$ for $K \in \R$ if the relative entropy is $K$-convex along $W_2$-geodesics on $\mathcal{P}_2(M)$, that is for any pair of probability measures $\nu_1$, $\nu_2$ with finite relative entropy with respect to $\mu$, there is a minimal $W_2$ geodesic $(\nu_t)_{t \in [0,1]}$ connecting them, and such that for all $t \in [0, 1]$ 
$$\Ent_{\mu}(\nu_t) \leq (1-t)\Ent_{\mu}(\nu_0) + t \Ent_{\mu}(\nu_1) -\frac{Kt(1-t)}{2}W_2(\nu_0, \nu_1)^2.$$
\end{defn}

The parameter $K$ plays the role of a Ricci curvature lower bound. Indeed, when the space is a smooth manifold endowed with its (normalized) volume measure, it satisfies this condition iff the Ricci curvature tensor is bounded from below by $K$ times the metric tensor. This definition is a particular instance of the more general curvature dimension CD$(K, N)$, that takes into account the dimension. Here we shall mostly deal with the infinite-dimensional setting, so we omit the full definition, and refer to \cite{Vi09} for the more general setting. 

Given the metric-measure structure, we can also define the Cheeger energy
$$\Ch(f) := \frac{1}{2} \inf_{(f_i)} \liminf \int{|\Lip f_i|^2d\mu}$$
where the infimum runs over the set of all sequences of locally-Lipschitz functions that converge to $f$ in $L^2(\mu)$, and where $\Lip (f)(x)$ is the local Lipschitz constant. %minimal local upper bound on the slope. 

Given an $L^2$ function with finite Cheeger energy, there exists a minimal weak upper gradient $|\nabla f| \in L^2(\mu)$ such that 
$$\Ch(f) = \frac{1}{2}\int{|\nabla f|^2 d\mu}.$$
The Sobolev space $W^{1,2}$ is the space of $L^2$ functions with finite Cheeger energy. We refer to \cite{AGS14b} for more details about these notions. 

\begin{defn}
A metric-measure space $(M, d , \mu)$ that satisfies the CD$(K, \infty)$ condition is said to satisfy the Riemannian curvature-dimension condition RCD$(K, \infty)$ if the Cheeger energy is a quadratic form, that is
$$\Ch(f+ g) + \Ch(f-g) = 2\Ch(f) + 2\Ch(g)$$
for all $f, g \in W^{1,2}$. 
\end{defn}

The most basic example of an RCD$(K, \infty)$ with positive $K$ is a Gaussian measure on $\R$  with variance $K^{-1}$. More generally, a probability measure on $\R^d$ with density $e^{-V}$ w.r.t. the Lebesgue measure satisfies the RCD$(K, \infty)$ condition iff $\Hess V \geq K$. 

On a smooth manifold endowed with its volume measure, the RCD$(K, \infty)$ condition is equivalent to requiring the Ricci curvature tensor to be bounded from below by $K$. More generally, if the measure has density $e^{-V}$ w.r.t. the volume measure, the RCD$(K, \infty)$ condition is equivalent to the Bakry-Emery condition
$$\operatorname{Ric} + \Hess V \geq K\tilde{g}$$
where $\tilde{g}$ is the Riemannian metric tensor of the manifold. However, there are also non-smooth spaces that satisfy the RCD$(K, \infty)$ condition, including Alexandrov spaces \cite{Pet} and some stratified spaces \cite{Ber}. 

On an RCD space, we can then define by polarization the scalar product of two elements of $W^{1,2}$ and the Dirichlet form
$$\langle \nabla f, \nabla g \rangle := \frac{1}{4}(|\nabla(f+g)|^2 - |\nabla (f-g)|^2) \in L^1(\mu); \hspace{2mm} \mathcal{E}(f,g) := \int{\langle \nabla f, \nabla g \rangle d\mu}.$$
The natural analog of the Laplacian on $(M, d, g)$ is the operator $\Delta : D(\Delta) \longrightarrow L^2$ such that
$$\mathcal{E}(f, g) = - \int{g(\Delta f) d\mu} \hspace{2mm} \forall g \in W^{1,2}.$$
The domain $D(\Delta)$ is dense in $W^{1,2}$. We refer to \cite{AGS14} for details of the construction. Note that even when the space is a smooth manifold, $\Delta$ is not simply the Laplace-Beltrami operator, since it takes into account the reference measure $\mu$, that is not necessarily the volume measure. For example, for the Euclidean space endowed with a measure with density $e^{-V}$, the natural Laplace operator is $\Delta_{eucl} - \nabla V \cdot \nabla$, where $\Delta_{eucl}$ is the usual Laplacian on $\R^d$. Finally, note that since the Cheeger energy is used as a Dirichlet form, the induced Laplacian is a local operator, which will allow us to use the diffusion property.

\subsubsection{Properties of RCD$(K, \infty)$ spaces}

An important property of RCD$(K, \infty)$ is the tensorization property, which states that this class of spaces is stable for the product: 

\begin{prop}[Tensorization property]
If two spaces $(M_i, d_i, \mu_i)$ satisfy the RCD$(K, \infty)$ condition, then the product space $(M_1 \times M_2, d_1 \oplus d_2, \mu_1 \otimes \mu_2)$ also does. Here $d_1 \oplus d_2((x_1,x_2), (y_1,y_2)) = \sqrt{d_1(x_1,y_1)^2 + d_2(x_2,y_2)^2}$. 
\end{prop}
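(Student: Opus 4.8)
The plan is to verify the two constituents of the RCD$(K,\infty)$ condition separately: the Lott--Sturm--Villani bound CD$(K,\infty)$, and the quadraticity (infinitesimal Hilbertianity) of the Cheeger energy. Both are stable under products, by results that are by now classical; I sketch the mechanisms below.

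\textbf{The curvature-dimension bound tensorizes.} This is Sturm's theorem; see also \cite[Chapter 30]{Vi09}. Write $\mathfrak{m}:=\mu_1\otimes\mu_2$ and fix $\nu_0,\nu_1\in\mathcal{P}_2(M_1\times M_2)$ with finite relative entropy with respect to $\mathfrak{m}$. Disintegrating $\nu_i$ along the projection onto $M_1$ as $\nu_i=\int_{M_1}\nu_i^{x_1}\,d\bar\nu_i(x_1)$ (with $\bar\nu_i$ the first marginal), the relative entropy obeys the chain rule
\[
\Ent_{\mathfrak{m}}(\nu_i)=\Ent_{\mu_1}(\bar\nu_i)+\int_{M_1}\Ent_{\mu_2}(\nu_i^{x_1})\,d\bar\nu_i(x_1),
\]
and both contributions are finite. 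Using that a geodesic of $(M_1\times M_2, d_1\oplus d_2)$ is a pair of geodesics of the two factors, one builds a $W_2$-geodesic $(\nu_t)$ joining $\nu_0$ to $\nu_1$ that is adapted to the product structure --- informally, transporting mass in the fibers $M_2$ along $W_2$-geodesics of $(M_2,d_2,\mu_2)$ while moving the base point along an optimal transport in $(M_1,d_1)$ --- in such a way that the squared length of the base displacement plus the fiber-averaged squared length of the fiber displacements reconstitutes $W_2(\nu_0,\nu_1)^2$. Applying the CD$(K,\infty)$ inequality of $(M_2,d_2,\mu_2)$ to each fiber curve and that of $(M_1,d_1,\mu_1)$ to the base curve, then recombining through the chain rule above and Fubini's theorem, yields the CD$(K,\infty)$ inequality for $(\nu_t)$, the two $-\tfrac{Kt(1-t)}{2}(\cdot)^2$ contributions summing to $-\tfrac{Kt(1-t)}{2}W_2(\nu_0,\nu_1)^2$.

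\textbf{Infinitesimal Hilbertianity tensorizes.} The crucial input is the Pythagorean identity for minimal weak upper gradients on the product: for $f\in W^{1,2}(M_1\times M_2,d_1\oplus d_2,\mathfrak{m})$, the slice $f(\cdot,x_2)$ lies in $W^{1,2}(M_1,d_1,\mu_1)$ for $\mu_2$-a.e.\ $x_2$, symmetrically $f(x_1,\cdot)\in W^{1,2}(M_2,d_2,\mu_2)$ for $\mu_1$-a.e.\ $x_1$, and
\[
|\nabla f|^2(x_1,x_2)=\big|\nabla f(\cdot,x_2)\big|^2(x_1)+\big|\nabla f(x_1,\cdot)\big|^2(x_2)\qquad\mathfrak{m}\text{-a.e.}
\]
This is obtained from the relaxation procedure defining $\Ch$ together with locality of the minimal weak upper gradient; see \cite{AGS14b,AGS14}. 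Integrating with respect to $\mathfrak{m}$ gives the additivity of Cheeger energies,
\[
\Ch_{M_1\times M_2}(f)=\int_{M_2}\Ch_{M_1}\big(f(\cdot,x_2)\big)\,d\mu_2(x_2)+\int_{M_1}\Ch_{M_2}\big(f(x_1,\cdot)\big)\,d\mu_1(x_1).
\]
Since $\Ch_{M_1}$ and $\Ch_{M_2}$ satisfy the parallelogram law by assumption, so does each of the two integrands jointly in the pair of functions, hence so does $\Ch_{M_1\times M_2}$. Together with the previous step, $(M_1\times M_2,d_1\oplus d_2,\mathfrak{m})$ satisfies RCD$(K,\infty)$.

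\textbf{Main obstacle.} The technical heart is the Wasserstein-geometric construction of the first step: producing a \emph{minimal} $W_2$-geodesic on the product that is genuinely compatible with the two factors --- rather than an arbitrary one, whose marginals need not be geodesics --- requires a measurable selection of optimal couplings in the fibers and a precise analysis of the hierarchical (fiber-then-base) structure of $W_2$ on a product space. The Pythagorean identity for weak upper gradients is also not purely formal, as it must be established directly from the definition of the Cheeger energy. Alternatively, one may argue through the Bakry--Émery characterization of RCD$(K,\infty)$, using that once infinitesimal Hilbertianity is known the generator of the product splits additively and the $\Gamma$-calculus tensorizes, so that $\Gamma_2^{M_1\times M_2}\geq K\,\Gamma^{M_1\times M_2}$ follows from the corresponding bounds on the factors.
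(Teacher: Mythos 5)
The paper itself offers no proof of this proposition: it is quoted as standard background for RCD spaces (tensorization of the Riemannian curvature-dimension condition is due to Ambrosio--Gigli--Savar\'e, with the CD part going back to Sturm), so your attempt has to be measured against the literature rather than against an argument in the text. Your splitting into ``CD$(K,\infty)$ tensorizes'' plus ``infinitesimal Hilbertianity tensorizes'' is the natural first idea and the conclusion is of course true, but both halves lean on inputs that are not as soft as you present them. First, Sturm's tensorization of CD$(K,\infty)$ is proved under a non-branching hypothesis on the factors; for general CD$(K,\infty)$ spaces the fiber-plus-base construction you sketch is exactly where the argument can fail, since the disintegration of the intermediate measures along the moving base need not be selectable in a geodesic, measurable way without non-branching. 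Here this is repairable because RCD$(K,\infty)$ spaces are essentially non-branching (Rajala--Sturm), but that is an extra theorem which your step one never invokes: as written, your CD step makes no use of the Riemannian assumption, and for bare CD$(K,\infty)$ factors the claim ``this is Sturm's theorem'' is not accurate.

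Second, the Pythagorean identity $|\nabla f|^2=|\nabla_1 f|^2+|\nabla_2 f|^2$ does not follow from ``the relaxation procedure plus locality''. Restricting a weak upper gradient of $f$ on the product to horizontal and vertical curves only yields $\max\big(|\nabla_1 f|,|\nabla_2 f|\big)\le|\nabla f|$; the genuinely hard direction is to show that $\sqrt{|\nabla_1 f|^2+|\nabla_2 f|^2}$ is a weak upper gradient for the product structure, where test curves may move diagonally and no staircase approximation is available at the level of weak upper gradients. The known proofs of this tensorization (Ambrosio--Gigli--Savar\'e, and later Ambrosio--Pinamonti--Speight) obtain it either by exploiting the curvature assumption through the heat semigroup or under doubling and Poincar\'e hypotheses, not formally. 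Indeed, the standard proof of the proposition does not follow your two-step splitting at all: one uses the characterization of RCD$(K,\infty)$ through the EVI$_K$ property of the heat flow (equivalently, the Bakry--\'Emery condition in the infinitesimally Hilbertian setting), which tensorizes cleanly because the heat semigroup of the product is the tensor product of the factor semigroups, and the tensorization of the Cheeger energy then comes out as a byproduct rather than being an input. This is precisely the ``alternative'' you mention in your closing sentence; to make your argument complete, that alternative should be the main road, or else you must explicitly supply essential non-branching for the CD step and cite (not derive) the Cheeger-energy tensorization theorem for the Hilbertianity step.
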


In particular this property explains why rigidity statements such as Theorem \ref{GKKO} involve a splitting: the product of an RCD$(1,\infty)$ space with minimal spectral gap and
any other RCD$(1,\infty)$ space is still an RCD$(1,\infty)$ space, and still has minimal spectral gap. 

The spectral gap $\lambda_1$ can be reformulated as the sharp constant in the Poincar\'e inequality for $\mu$, that is the largest constant $C_P$ such that
$$\Var_{\mu}(f)  \leq \frac{1}{C_P}\int{|\nabla f|^2d\mu} \hspace{3mm} \forall f \in W^{1,2}.$$

As we have mentioned in the introduction, an RCD$(K, \infty)$ space with positive $K$ satisfies a Poincar\'e inequality with constant $K^{-1}$, that is $\lambda_1 \geq K$. Moreover, as proved in \cite{GKKO} (with results in the smooth setting obtained in \cite{CZ17}), this bound is rigid: if the sharp spectral gap of an RCD$(K, \infty)$ is equal to $K$, then the space splits off a Gaussian factor that has variance $K^{-1}$. 

The sharp constant is indeed the inverse of the smallest positive eigenvalue $\lambda_1$ of $-\Delta$, that is there exists an eigenfunction $f$ such that
$$\Delta f = -C_P^{-1}f.$$
Existence is ensured by the validity of the logarithmic Sobolev inequality, which is strictly stronger than the Poincar\'e inequality, and provides enough compactness to ensure there is a function that achieves equality in the sharp Poincar\'e inequality, see \cite[Proposition 6.7]{GMS15}. 

%{\color{red} Je crois que finalement on n'utilise pas le r\'esultat suivant.} In general, for an RCD$(K, \infty)$ space, it may be that there are Lipschitz functions for which the minimal weak upper gradient defined from the Cheeger energy does not match with the Lipschitz upper slope. In general, we only have the inequality
%$$|\nabla f|(x) \leq \limsup_{y \rightarrow x} \frac{|f(x)-f(y)|}{d(x,y)}$$
%for locally Lipschitz functions. To bypass this issue in our proofs, we shall use the following result, taken from \cite[Theorem 6.2]{AGS14}: 
%
%\begin{prop}[Sobolev-to-Lipschitz property]
%A function $f \in W^{1,2}$ such that $|\nabla f| \leq C$ almost everywhere admits a $C$-Lipschitz representative (up to sets of measure zero). 
%\end{prop}

\subsection{Localization of the curvature condition}

Part of our work requires an additional assumption on the metric measure space, namely the existence of a suitable needle decomposition. A brief reminder on that decomposition is provided below. This notion originates in Klartag's work \cite{Kla17} in the smooth setting, and was extended to metric measure spaces by Cavalletti and Mondino \cite{CM17a}. Recently, \cite{CGS20} showed that the needle decomposition provides an equivalent way of defining curvature-dimension conditions with finite dimension (which unfortunately is not our setting here).

\begin{thm}[Needle decomposition] \label{thm_needle_dec}
Let $(M, d, \mu)$ be an RCD$(1, N)$ space ($N<+\infty$) and $f$ be a centered, $L^1$ function such that
$$\int{|f(x)|d(x, x_0)d\mu} < \infty.$$
Then the space $X$ can be written as the disjoint union of two measurable sets $Z$ and $T$, where $T$ admits a partition in the following sense. There exists a $1$-Lipschitz function $g$, a partition $(X_q)_{q \in Q}$ of $T$, a probability measure $\Q$ on $Q$ and a family of probability measures $(m_q)_{q \in Q}$, each supported on $X_q$, such that
\begin{enumerate}
\item For any measurable $A$, $\mu(A\cap T) = \int{m_q(A)d\Q(q)}$; 

\item For $\Q$ a.e. $q$, for any $x, y$ in $X_q$, $d(x,y) = |g(x) - g(y)|$, and for any $z \notin X_q$, there exists $x \in X_q$ such that $|g(z) - g(x)| < d(x,z)$; 

\item For $\Q$ a.e. $q$, if $X_q$ is not a singleton then $(X_q, d, m_q)$ is RCD$(1, N)$;

\item For $\Q$ a.e. $q$, $\int_{X_q}{fdm_q} = 0$ and $f=0$ $\mu$-a.e. in $Z$. 
\end{enumerate}
\end{thm}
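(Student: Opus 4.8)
The plan is to follow the $L^1$-optimal transport (localization) technique, as introduced by Klartag \cite{Kla17} in the smooth setting and extended to metric measure spaces by Cavalletti--Mondino \cite{CM17a}. First I would reduce the statement to the structure theory of Monge--Kantorovich problems with cost equal to the distance. Write $f = f_+ - f_-$ for the positive and negative parts; since $f$ is centered, $\int f_+ \, d\mu = \int f_- \, d\mu =: c$, and if $c = 0$ then $f = 0$ $\mu$-a.e. and one takes $Z = X$, $T = \emptyset$. Otherwise the moment hypothesis $\int |f(x)| d(x, x_0) \, d\mu < \infty$ ensures that $\mu_+ := c^{-1} f_+ \mu$ and $\mu_- := c^{-1} f_- \mu$ are probability measures of finite first moment, so the $L^1$-optimal transport problem between $\mu_+$ and $\mu_-$ is well posed; I would let $g$ be an associated Kantorovich potential, which is automatically $1$-Lipschitz. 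This $g$ is the guiding function of the decomposition.

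Next I would invoke the structure theory of $L^1$-optimal transport. The potential $g$ determines a transport set $T$ and decomposes it into transport rays, i.e.\ maximal subsets on which $g$ restricts to an isometry onto an interval; these are exactly the sets $X_q$, and maximality is precisely the property in item (2). The essential technical input is that, because an RCD$(1,N)$ space with $N < \infty$ is essentially non-branching, these rays form a partition of $T$ up to a $\mu$-null set, the quotient $Q$ carries a measurable structure making the quotient map measurable, and $\mu$ restricted to $T$ disintegrates as $\mu(A \cap T) = \int m_q(A)\, d\Q(q)$ with each $m_q$ concentrated on $X_q$; this produces items (1) and (2). The complement $Z := X \setminus T$ gathers the points carrying no transported mass together with the branching/degenerate points, and a standard argument gives $f_+ = f_- = 0$ $\mu$-a.e.\ on $Z$, hence $f = 0$ $\mu$-a.e.\ there. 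Combined with the fact that the optimal plan conserves mass and moves it monotonically along each ray, so that the net $\mu_+ - \mu_-$ mass routed through $X_q$ vanishes, this yields $\int_{X_q} f \, dm_q = 0$ for $\Q$-a.e.\ $q$, which is item (4).

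It remains to establish item (3), that the curvature condition descends to the conditional measures. Here I would appeal to the localization theorem of Cavalletti--Mondino (extending Klartag): the conditional measures of an RCD$(1,N)$ space along the transport rays of a $1$-Lipschitz potential are, after identifying $X_q$ with an interval through $g$, of the form $h_q \, \mathcal{L}^1$ with densities $h_q$ satisfying the one-dimensional CD$(1,N)$ differential inequality. Since on an interval the Cheeger energy is a quadratic form, every one-dimensional CD space is automatically RCD, so each non-degenerate $(X_q, d, m_q)$ is RCD$(1,N)$.

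I expect the main obstacle to be the structural step: proving that the transport rays genuinely partition $T$ up to a null set with a measurable, strongly consistent disintegration requires the full regularity theory of $L^1$-optimal transport on metric measure spaces and uses essential non-branching (which follows from the RCD assumption) in an essential way; likewise, transferring the curvature bound to the one-dimensional conditionals is the delicate heart of the localization argument, and is where all the RCD$(1,N)$ hypotheses are really consumed. By comparison, the isometry property (2) and the mass-balance identity (4) are comparatively soft once the ray structure and the disintegration are in place.
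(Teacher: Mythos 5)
Your outline is correct: the paper itself does not prove this statement but recalls it as the localization/needle decomposition theorem of Cavalletti--Mondino \cite{CM17a} (extending Klartag \cite{Kla17}), and your sketch --- Kantorovich potential for the $L^1$-transport between $f_+\mu$ and $f_-\mu$ as guiding function, transport-ray partition via essential non-branching, measurable disintegration, mass balance on rays, and descent of the curvature bound to the one-dimensional conditionals (which are RCD since the Cheeger energy on an interval is quadratic) --- is precisely the strategy of those references. So you take essentially the same route as the paper, which simply cites this result.
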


Note that the properties of the needle decomposition require that $\nu$-almost every needle is not reduced to a singleton, and that $\operatorname{Lip} (g) = 1$ almost everywhere.  

The $1$-Lipschitz function $g$ is called a guiding function, and is obtained as an optimizer in the variational problem

$$\sup_{u 1-lip} \int{f u d\mu}.$$

It can be interpreted as the potential in the Kantorovitch-Rubinstein dual formulation of the $L^1$ optimal transport problem between the positive measures $f_+ \mu$ and $f_-\mu$, which have same mass when $f$ is centered. %We refer to A REF for the analysis of the $L^1$ transport problem on metric spaces. 

Since we consider infinite-dimensional spaces, the current state of knowledge on needle decompositions does not guarantee existence of the decomposition, as the proof uses compactness of finite-dimensional positively curved spaces. However, it may nonetheless exist (indeed, its study in the Euclidean context goes back to work of Payne and Weinberger \cite{PW60}). So we make the following assumption for Theorems \ref{sg_to_diam} and \ref{thm_dobs_to_sg}.  

\begin{assumption} \label{assumpt_needle}
The space $(M, d, \mu)$ satisfies the conclusion of the above theorem with $N = \infty$. 
\end{assumption}

Note that since the second moment is finite and the eigenfunction is $L^2$, the integrability condition $\int{|f|d(x,x_0)d\mu} < \infty$ is automatically satisfied. 

\begin{prop}[Structure of needles] \label{prop_struct_needle}
Without loss of generality, for a.e. $q$ we can identify $(X_q, d, m_q)$ with a space of the form $(\R, |\cdot|, e^{-\psi_q}dx)$ where $\psi''_q \geq 1$ (in the weak sense) and  $\psi_q$ is finite in some interval $I$, and such that, on $X_q$, $g(t) = t + c_q$. 
\end{prop}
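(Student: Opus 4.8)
\medskip

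The plan is to unwind the two non-trivial outputs of the needle decomposition (Theorem \ref{thm_needle_dec}) — namely the guiding function $g$ and the conditional spaces $(X_q, d, m_q)$ — and show that, after a harmless reparametrization, each needle is an interval of $\R$ carrying a log-concave measure with the claimed lower bound on the second derivative of the potential. First I would fix a $\Q$-generic $q$ for which properties (2) and (3) of Theorem \ref{thm_needle_dec} hold simultaneously, so that $X_q$ is not a singleton and $(X_q, d, m_q)$ is RCD$(1, \infty)$. Property (2) tells us that $g$ restricted to $X_q$ is an isometry onto its image: $d(x, y) = |g(x) - g(y)|$ for all $x, y \in X_q$. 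Since $X_q$ is a geodesic subset (needles are geodesics, by the transport-ray structure underlying the decomposition), its image $g(X_q)$ is an interval $I \subseteq \R$, and $g|_{X_q} : X_q \to I$ is a surjective isometry. Transporting $m_q$ through this isometry identifies $(X_q, d, m_q)$ with $(I, |\cdot|, (g|_{X_q})_\sharp m_q)$, and the map $t \mapsto t$ on $I$ then plays the role of $g$; absorbing the constant that fixes the origin of $I$ gives $g(t) = t + c_q$ on $X_q$, which is the last assertion.

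\medskip

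Next I would identify the pushed-forward measure. By property (3), $(I, |\cdot|, (g|_{X_q})_\sharp m_q)$ is an RCD$(1, \infty)$ space. On a (nondegenerate) interval of $\R$ with the Euclidean metric, the RCD$(1, \infty)$ condition forces the reference measure to be absolutely continuous with a density of the form $e^{-\psi_q}$, where $\psi_q$ is a proper convex function with $\psi_q'' \geq 1$ in the distributional sense — this is exactly the one-dimensional case recalled in the preliminaries, where a measure $e^{-V}dx$ on $\R^d$ is RCD$(K, \infty)$ iff $\Hess V \geq K$, specialized to $d = 1$, $K = 1$. (One should note that $\psi_q$ need only be finite on the interior of $I$, consistent with the statement "$\psi_q$ is finite in some interval $I$"; outside $I$ we set $\psi_q = +\infty$.) This gives the representation $(X_q, d, m_q) \cong (\R, |\cdot|, e^{-\psi_q}dx)$ with $\psi_q'' \geq 1$ weakly, completing the identification.

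\medskip

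The phrase "without loss of generality" covers the fact that all of this holds only for $\Q$-almost every $q$, and that the choice of affine parametrization of each needle (the additive constant $c_q$) is arbitrary; one fixes a measurable selection of these parametrizations, which is possible since the needle decomposition already comes with a measurable family structure. The main obstacle — though it is more bookkeeping than genuine difficulty — is checking that the family $(\psi_q, I_q, c_q)$ can be chosen to depend measurably on $q$, so that statements integrated against $\Q$ later remain meaningful; this follows from the measurability built into Theorem \ref{thm_needle_dec} together with the fact that the identification maps above are canonical once $c_q$ is chosen. A secondary point to be careful about is that a needle could a priori be a half-line or all of $\R$ rather than a bounded interval, but this is harmless: the statement only claims $\psi_q$ is finite on \emph{some} interval $I$, which accommodates all these cases.
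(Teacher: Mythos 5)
Your argument is correct and follows essentially the same route as the paper: use property (2) of Theorem \ref{thm_needle_dec} to identify $X_q$ isometrically with the interval $I = g(X_q)$ via $g$ itself (giving $g(t)=t+c_q$), invoke property (3) so that the pushed-forward measure on $I$ is RCD$(1,\infty)$ and hence of the form $e^{-\psi_q}dx$ with $\psi_q''\geq 1$ weakly, and extend $\psi_q$ by $+\infty$ outside $I$. Your additional remarks on measurability in $q$ and on unbounded needles are sensible bookkeeping that the paper leaves implicit.
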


\begin{proof}
Let $I = \{g(x), x \in X_q\}$. It is an interval of $\R$. From the structure of the needle, we can parameterize $X_q$ by $I$, and the measure $m_q$ is of the form $e^{-\psi_q}dx$ on $I$. We can then extend $\psi_q$ to take the value $+\infty$ outside $I$, which allows us to identify $(I, |\cdot|, e^{-\psi_q}dx)$ with $(\R, |\cdot|, e^{-\psi_q}dx)$. 
\end{proof}

\subsection{Observable diameter}

We now define the notion of observable diameter, which plays the role of diameter for RCD$(1,\infty)$ spaces, by taking advantage of concentration inequalities. This notion was introduced by Gromov in \cite{Gro}. Informally, a control on the observable diameter states that, even if the diameter itself is not finite, most of the mass is concentrated inside a set of controlled diameter. 

\begin{defn}
The observable diameter $D_{obs}$ of the metric-measure space $(M, d, \mu)$ is a function of $\kappa \in (0, 1)$ defined by 
$$D_{obs}((M,d,\mu); \kappa) := \sup_{f:M \rightarrow \R }\Big\{ \inf_{E \operatorname{Borel}} \left\{ \operatorname{diam}(E); f_{\sharp}\mu(E) \geq 1-\kappa\right\}; f \; 1\operatorname{-Lipschitz}\Big\}.$$
%\sup_{f}\left\{ \inf_{E} \left\{ \operatorname{diam}(f(E)); \mu(E) \geq 1-\kappa\right\}; f : M \longrightarrow \R \; 1\text{-Lipschitz}\right\}.$$

\end{defn}

The observable diameter can be estimated by means of the separation distance whose definition (in the case of two sets with the same lower bound on their mass) is recalled below.

\begin{defn}
The separation distance is a function of $\kappa \in (0, 1)$ defined by 
$$ Sep((M,d,\mu); \kappa):=\sup_{A_1,A_2\subset X \;\rm{Borel}} \left\{ \operatorname{d} (A_1,A_2); \mu(A_i) \geq \kappa, i \in\{1,2\}\right\}.$$

\end{defn}

The following inequality holds (see \cite[Proposition 2.26]{Shioya} for a proof):

\begin{prop} Let $(M, d, \mu)$ be a metric measure space such that $\mu(M)=1$. Then, for any $\kappa \in (0,1)$

\begin{equation}\label{ObsSep}
D_{obs}((M,d,\mu); \kappa) \leq Sep((M,d,\mu); \kappa/2).
\end{equation}

\end{prop}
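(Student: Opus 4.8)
The plan is to reduce \eqref{ObsSep} to a one-dimensional computation. Since $D_{obs}((M,d,\mu);\kappa)$ is a supremum over $1$-Lipschitz maps $f:M\to\R$, it suffices to bound, for each such $f$, the inner quantity $\inf_E\{\operatorname{diam}(E):f_\sharp\mu(E)\ge 1-\kappa\}$ by $Sep((M,d,\mu);\kappa/2)$, and I would obtain this by composing two elementary facts. The first is that a $1$-Lipschitz pushforward cannot increase the separation distance: writing $\nu:=f_\sharp\mu$, if $B_1,B_2\subseteq\R$ are Borel with $\nu(B_i)\ge\kappa/2$, then $A_i:=f^{-1}(B_i)$ are Borel with $\mu(A_i)=\nu(B_i)\ge\kappa/2$, and since $d(x,y)\ge|f(x)-f(y)|$ for all $x,y\in M$ one has $d(A_1,A_2)\ge d(B_1,B_2)$; taking suprema, $Sep((\R,|\cdot|,\nu);\kappa/2)\le Sep((M,d,\mu);\kappa/2)$.

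The second fact is the one-dimensional estimate: for any Borel probability measure $\nu$ on $\R$ and any $\kappa\in(0,1)$,
\[
\inf_E\{\operatorname{diam}(E):\nu(E)\ge 1-\kappa\}\ \le\ Sep((\R,|\cdot|,\nu);\kappa/2).
\]
To prove it I would introduce the two ``$\kappa/2$-quantiles'' $a:=\sup\{t\in\R:\nu((-\infty,t))\le\kappa/2\}$ and $b:=\inf\{t\in\R:\nu((t,\infty))\le\kappa/2\}$, both finite because $\kappa/2<1/2$. One-sided continuity of the distribution function gives $\nu((-\infty,a))\le\kappa/2$ and $\nu((b,\infty))\le\kappa/2$, whereas $\nu((-\infty,t))>\kappa/2$ for every $t>a$ and $\nu((t,\infty))>\kappa/2$ for every $t<b$; the last two bounds yield $\nu((-\infty,a])\ge\kappa/2$ and $\nu([b,\infty))\ge\kappa/2$. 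Choosing $E=[a,b]$ we then get $\nu(\R\setminus E)=\nu((-\infty,a))+\nu((b,\infty))\le\kappa$, so $\nu(E)\ge 1-\kappa$; and the sets $A_1=(-\infty,a]$, $A_2=[b,\infty)$ are admissible competitors for $Sep((\R,|\cdot|,\nu);\kappa/2)$, so $\operatorname{diam}(E)=b-a=d(A_1,A_2)\le Sep((\R,|\cdot|,\nu);\kappa/2)$ (when $a<b$; the case $a=b$ is trivial). Concatenating the two facts and taking the supremum over $f$ gives \eqref{ObsSep}, with no curvature assumption needed: the statement holds for any metric measure space of total mass one.

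The only point that requires genuine care, and where I would concentrate the argument, is verifying that $a\le b$, so that $[a,b]$ is an actual interval and $b-a$ is a legitimate diameter. If one had $a>b$, then every $t\in(b,a)$ would satisfy $\nu(\{t\})=1-\nu((-\infty,t))-\nu((t,\infty))\ge 1-\kappa>0$, forcing uncountably many atoms of positive mass, which is impossible; this is precisely where the hypotheses $\mu(M)=1$ and $\kappa<1$ enter. Everything else is routine bookkeeping with the left- and right-continuous distribution functions of $\nu$.
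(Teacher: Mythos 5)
Your argument is correct and complete: the reduction to the pushforward $\nu=f_\sharp\mu$, the monotonicity $Sep((\R,|\cdot|,\nu);\kappa/2)\le Sep((M,d,\mu);\kappa/2)$ under $1$-Lipschitz maps, and the one-dimensional quantile estimate (including the verification that $a\le b$ via the atoms argument) all check out, and no curvature hypothesis is needed. The paper does not prove this proposition itself but cites \cite[Proposition 2.26]{Shioya}, and your proof is essentially that standard argument, so it simply fills in the omitted details along the same route.
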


\begin{rmq}
In the case of the Gaussian space $(\R,|\cdot|,\gamma)$, the symmetry of the density with respect to the origin yields for any $\kappa \in (0,1)$
$$ D_{obs}((\R,|\cdot|,\gamma); \kappa) = Sep((\R,|\cdot|,\gamma); \kappa/2).$$
\end{rmq}

On RCD$(1,\infty)$ spaces, the Lévy-Gromov type isoperimetric inequality has been proved by Ambrosio and Mondino \cite{AmMo16}:

\begin{thm}\label{IIn} Let $(X,d,\mu)$ be an RCD$(1,\infty)$ probability space. Then, for any Borel subset $E\subset X$ it holds
$$ \mathcal{P}(E) \geq \frac{1}{\sqrt{2\pi}} e^{-a_E^2/2},$$
where $\mathcal{P}$ stands for perimeter, and $ \mu(E)= \gamma((-\infty,a_E])$.
\end{thm}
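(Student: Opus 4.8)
The statement is the Bobkov--Bakry--Ledoux Gaussian isoperimetric inequality transported to the RCD$(1,\infty)$ setting, so the plan is to follow the semigroup proof of the corresponding functional (Bobkov) inequality, in the form carried out by Ambrosio and Mondino. Write $\varphi(x)=(2\pi)^{-1/2}e^{-x^{2}/2}$, $\Phi(x)=\int_{-\infty}^{x}\varphi$, and let $\mathcal{I}:=\varphi\circ\Phi^{-1}\colon[0,1]\to[0,\infty)$ be the Gaussian isoperimetric function; it is concave and continuous, vanishes at $0$ and $1$, has $\|\mathcal{I}\|_{\infty}=\mathcal{I}(1/2)=(2\pi)^{-1/2}$, and solves $\mathcal{I}\,\mathcal{I}''=-1$ on $(0,1)$. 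Since $\mu(E)=\gamma((-\infty,a_E])=\Phi(a_E)$, the claimed bound is exactly $\mathcal{P}(E)\ge\mathcal{I}(\mu(E))$, so it suffices to prove this. As analytic input, I would use the equivalence of RCD$(1,\infty)$ with the Bakry--\'Emery condition BE$(1,\infty)$ together with Savar\'e's self-improvement: the heat semigroup $(P_t)_{t\ge0}$ of the Dirichlet form $\mathcal{E}$ satisfies the pointwise gradient estimate $|\nabla P_tf|^{2}\le e^{-2t}P_t(|\nabla f|^{2})$, maps $L^{2}(\mu)$ into a dense algebra of ``test'' functions (bounded, with bounded weak gradient and Laplacian), and admits a measure-valued $\Gamma_2$-calculus satisfying the refined Bochner inequality $\Gamma_2(f)\ge|\nabla f|^{2}+\big|\,\nabla|\nabla f|\,\big|^{2}$ on $\{|\nabla f|>0\}$ for $f$ in this algebra.

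The core step is Bobkov's functional inequality
$$\mathcal{I}\!\left(\int_X f\,d\mu\right)\le\int_X\sqrt{\mathcal{I}(f)^{2}+|\nabla f|^{2}}\,d\mu\qquad\text{for }f\in W^{1,2}(X,\mu),\ 0\le f\le1.$$
To obtain it I would fix such an $f$ in the test algebra, fix $t>0$, and study, for $s\in[0,t]$,
$$\Lambda(s):=P_s\!\left(\sqrt{\mathcal{I}(P_{t-s}f)^{2}+(1-e^{-2s})\,|\nabla P_{t-s}f|^{2}}\right),$$
which interpolates between $\Lambda(0)=\mathcal{I}(P_tf)$ (using $0\le P_tf\le1$) and $\Lambda(t)=P_t\big(\sqrt{\mathcal{I}(f)^{2}+(1-e^{-2t})|\nabla f|^{2}}\big)$. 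Differentiating in $s$ — after adding an auxiliary constant $\delta>0$ under the square root and truncating $f$ away from $\{0,1\}$, where $\mathcal{I}$ is only $1/2$-H\"older — and using $\partial_sP_{t-s}f=-\Delta P_{t-s}f$, the identity $\mathcal{I}\,\mathcal{I}''=-1$, integration by parts in $\mathcal{E}$, and the refined Bochner inequality, one checks that $\Lambda'(s)\ge0$, hence $\mathcal{I}(P_tf)\le\Lambda(t)$. Removing the regularizations and then letting $t\to\infty$ (so that $P_tf\to\int_Xf\,d\mu$ in $L^{2}$ by the Poincar\'e inequality, $\int_X|\nabla P_tf|^{2}\,d\mu\to0$ by the gradient estimate, and $\mathcal{I}$ is bounded and continuous) yields Bobkov's inequality, which density extends to all $f\in W^{1,2}(X,\mu)$ with $0\le f\le1$.

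It remains to pass from functions to sets. Given Borel $E$ with $\mathcal{P}(E)<\infty$ (otherwise there is nothing to prove), by the definition of perimeter there are locally Lipschitz $f_n$ with $0\le f_n\le1$, $f_n\to\mathbf{1}_E$ in $L^{1}(\mu)$, and $\int_X|\nabla f_n|\,d\mu\to\mathcal{P}(E)$. Then $\int_Xf_n\,d\mu\to\mu(E)$, so $\mathcal{I}\big(\int_Xf_n\,d\mu\big)\to\mathcal{I}(\mu(E))$ by continuity; and $\mathcal{I}(f_n)\to0$ $\mu$-a.e., dominated by $\|\mathcal{I}\|_{\infty}$, whence $\int_X\mathcal{I}(f_n)\,d\mu\to0$ by dominated convergence. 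Since $\sqrt{\mathcal{I}(f_n)^{2}+|\nabla f_n|^{2}}\le\mathcal{I}(f_n)+|\nabla f_n|$, passing to the limit in Bobkov's inequality gives $\mathcal{I}(\mu(E))\le\mathcal{P}(E)$, that is $\mathcal{P}(E)\ge\varphi(a_E)=(2\pi)^{-1/2}e^{-a_E^{2}/2}$, as asserted.

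I expect the interpolation step to be the main obstacle: in the smooth case the sign of $\Lambda'(s)$ comes from a classical but delicate computation, and in the RCD setting one must additionally (i) replace the pointwise Bochner identity by Savar\'e's self-improved measure-valued $\Gamma_2$ inequality; (ii) secure enough regularity of $P_{t-s}f$ — membership in the test algebra, with control of $|\nabla P_{t-s}f|$ and $|\Delta P_{t-s}f|$ — to justify differentiating under the integral and the integrations by parts; and (iii) control the singular behaviour of $\mathcal{I}$ and $\mathcal{I}'$ at the endpoints through the truncation away from $\{0,1\}$ combined with the gradient estimate, the $\delta$-regularization handling the non-smoothness of the square root where $|\nabla P_{t-s}f|$ vanishes. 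The remaining steps are comparatively soft.
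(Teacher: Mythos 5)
The paper does not prove this statement: it is quoted directly from Ambrosio--Mondino \cite{AmMo16}, whose proof is precisely the semigroup (Bakry--Ledoux) derivation of Bobkov's functional inequality, using Savar\'e's self-improved Bakry--\'Emery calculus, followed by the standard relaxation from functions to sets. Your outline reproduces that same argument, including the delicate interpolation step and the passage to perimeter, so it is correct in approach and matches the cited source.
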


As a consequence, one gets the following comparison inequality for the observable diameter.

\begin{thm}\label{ObsCompa} Let $(M,d,\mu)$ be an RCD$(1,\infty)$ probability space. Then for any $\kappa \in (0,1)$, the following inequalities hold
\begin{equation*}
\begin{split}
 D_{obs}((M,d,\mu); \kappa) & \leq   Sep((M,d,\mu); \kappa/2) \\ & \leq  Sep((\R,|\cdot|,\gamma); \kappa/2) = D_{obs}((\R,|\cdot|,\gamma); \kappa).\end{split}
\end{equation*}

\end{thm}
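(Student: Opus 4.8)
The goal is to prove Theorem \ref{ObsCompa}, which chains together three facts: the general inequality \eqref{ObsSep} relating $D_{obs}$ to $Sep$, a comparison of separation distances between an RCD$(1,\infty)$ space and the Gaussian, and the identity $Sep((\R,|\cdot|,\gamma);\kappa/2) = D_{obs}((\R,|\cdot|,\gamma);\kappa)$ recorded in the remark. The first and third ingredients are already available, so the real content is the middle inequality $Sep((M,d,\mu);\kappa/2) \leq Sep((\R,|\cdot|,\gamma);\kappa/2)$.

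\begin{proof}[Proof sketch]
The plan is to derive the middle inequality from the Lévy–Gromov isoperimetric inequality of Theorem \ref{IIn}. First I would fix two Borel sets $A_1, A_2 \subset X$ with $\mu(A_i) \geq \kappa/2$ and aim to bound $d(A_1,A_2)$ from above by the Gaussian separation distance at level $\kappa/2$. To this end, consider the family of $r$-neighborhoods $A_1^r := \{x : d(x,A_1) < r\}$ for $r \in (0, d(A_1,A_2))$; by construction these are disjoint from $A_2$, so $\mu(A_1^r) \leq 1 - \kappa/2$. The function $r \mapsto \mu(A_1^r)$ is nondecreasing, and its lower derivative controls the perimeter of $A_1^r$ via the coarea/Minkowski-content relation available on RCD spaces. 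Applying Theorem \ref{IIn} to each $A_1^r$ gives a differential inequality: writing $\mu(A_1^r) = \gamma((-\infty, a(r)])$, one gets $a'(r) \geq 1$ in an appropriate (distributional) sense, since $\mathcal{P}(A_1^r) \geq \frac{1}{\sqrt{2\pi}} e^{-a(r)^2/2} = \gamma'(a(r))$ forces the Gaussian profile to move at unit speed at least. Integrating from $r \to 0^+$ (where $a(r) \to a_{A_1}$ with $\gamma((-\infty,a_{A_1}]) = \mu(A_1) \geq \kappa/2$) up to $r \to d(A_1,A_2)^-$ (where $a(r)$ stays below the value $b$ with $\gamma((-\infty,b]) = 1-\kappa/2$) yields $d(A_1,A_2) \leq b - a_{A_1} \leq \gamma^{-1}(1-\kappa/2) - \gamma^{-1}(\kappa/2)$, which is exactly $Sep((\R,|\cdot|,\gamma);\kappa/2)$ by the one-dimensional computation. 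Taking the supremum over $A_1, A_2$ gives the claim.

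Then the theorem follows by concatenation: $D_{obs}((M,d,\mu);\kappa) \leq Sep((M,d,\mu);\kappa/2)$ by \eqref{ObsSep}, the middle inequality just established, and $Sep((\R,|\cdot|,\gamma);\kappa/2) = D_{obs}((\R,|\cdot|,\gamma);\kappa)$ by the symmetry of the Gaussian density observed in the remark.

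The main obstacle is making the isoperimetric differential inequality rigorous: one must justify that $r \mapsto \mu(A_1^r)$ is differentiable almost everywhere with $\frac{d}{dr}\mu(A_1^r) \geq \mathcal{P}(A_1^r)$ (a standard fact for the Minkowski content of distance-neighborhoods, but needing care in the metric-measure setting), and then pass this pointwise bound through the monotone reparametrization by the Gaussian c.d.f. to get $a'(r) \geq 1$. An alternative, cleaner route that avoids the ODE is to invoke directly the known sharp concentration/isoperimetric consequence: Theorem \ref{IIn} implies the Gaussian-type enlargement bound $\mu(A_1^r) \geq \Phi(\Phi^{-1}(\mu(A_1)) + r)$ where $\Phi$ is the Gaussian c.d.f., and combining $\mu(A_1^r) \leq 1 - \kappa/2$ with $\mu(A_1) \geq \kappa/2$ immediately gives $r \leq \Phi^{-1}(1-\kappa/2) - \Phi^{-1}(\kappa/2)$. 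I would present the argument this way if that enlargement inequality can be cited or is quick to deduce, and fall back on the explicit integration otherwise.
\end{proof}
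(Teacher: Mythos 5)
Your proposal is correct and takes essentially the same approach as the paper: the middle inequality $Sep((M,d,\mu);\kappa/2)\leq Sep((\R,|\cdot|,\gamma);\kappa/2)$ is proved there exactly via your ``cleaner route'', namely the integrated (enlargement) form of the isoperimetric inequality, $\mu(V_s(A))\geq \gamma((-\infty,\sigma^{-1}(\mu(A))+s])$, applied to a pair $A_1,A_2$ with $\mu(A_i)\geq\kappa/2$ and $s$ up to $d(A_1,A_2)$, then combined with \eqref{ObsSep} and the Gaussian symmetry remark. Your primary ODE/coarea derivation of that enlargement bound from Theorem \ref{IIn} is just the standard way to obtain the integrated form, which the paper sidesteps by invoking it directly (following Shioya's argument), so no gap remains.
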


\begin{proof}
The only property that is yet to prove  is the inequality involving the separation distances of $(\R,|\cdot|,\gamma)$ and $(M,d,\mu)$. The proof is the same as the one of \cite[Lemma 2.32]{Shioya}, we report it since we need the argument later in this work.

Let $A_1,A_2 \subset X$ be two mutually disjoint sets such that $\mu(A_i) \geq \kappa/2$ for $i \in \{1,2\}$. Set $r:= d(A_1,A_2)$ and notice that $A_1 \cap V_r(A_2)=\emptyset$, where $V_r(A_2)$ stands for the open $r$-neighborhood of $A_2$. Let us also define $\sigma(s):= \gamma((-\infty,s])$ and note that $\sigma$ is an increasing function of $s$. Consequently, making use of the isoperimetric inequality for RCD$(1,\infty) $ spaces in integrated form, we infer for any $s>0$
$$ \mu (V_s(A_2)) \geq \gamma((-\infty, -a +\delta +s]) \geq  \gamma((-\infty, -a +s]),$$
where $-a+\delta :=\sigma^{-1}(\mu(A_2)) \geq -a:=\sigma^{-1}(\kappa/2)$. With this notation,
$$ Sep((\R,|\cdot|,\gamma); \kappa/2)=2 |\sigma^{-1}(\kappa/2)| =  2a.$$
Now, observe that
\begin{eqnarray*}
\kappa/2 \leq \mu(A_1)  & \leq &  1 -\mu(V_r(A_2))\\
					 & \leq &  1 - \gamma((-\infty, -a +r]).%\\
					% & \leq & 1-\gamma((-\infty, -\sigma^{-1}(\kappa/2) +r]).
\end{eqnarray*}
Therefore, by symmetry of $\gamma$,
$$ \gamma ((-\infty,-a])  +\gamma(([a - r,+\infty)) = \kappa/2 + \gamma((-\infty, -a +r])\leq  1.$$
Thus, $a-r \geq -a$ hence $2|\sigma^{-1}(\kappa/2)| \geq r$ and the result is proved. 
\end{proof}

\section{Properties of RCD$(1,\infty)$ spaces with almost minimal spectral gap}

In this section, we shall investigate properties of RCD$(1,\infty)$ spaces when the spectral gap is close to one, and prove Theorems \ref{main_thm_rcd} and \ref{thm_gaussian_component_eigen}. 

In the sequel, $L^p$ norms will always be with respect to the reference measure $\mu$, that is 
$$||f||_p := \left(\int{|f|^pd\mu}\right)^{1/p}.$$

\subsection{Main lemma}

We shall first prove the main technical lemma that underlies the proofs of our results, Lemma \ref{keylemma}, which states that if the spectral gap is close to one, then the gradient of an associated eigenfunction is almost constant. 

We shall use the following $L^p$ version of Poincar\'e inequalities: 

\begin{prop} \label{prop_p_poincare}
Under the curvature condition, for any $p \geq 1$we have
$$\left|\left|f - \int{fd\mu}\right|\right|_{p} \leq 2p\left|\left||\nabla f|\right|\right|_{p}.$$ 
\end{prop}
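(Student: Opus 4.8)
The plan is to deduce the $L^p$ inequality from the $L^2$ Poincaré inequality (the spectral gap bound $\lambda_1 \geq 1$ available under the curvature condition CD$(1,\infty)$) by an interpolation/iteration argument on powers of the function. Without loss of generality assume $\int f\,d\mu = 0$, and for $p>2$ it suffices to bound $\|f\|_p$. The standard route is to apply the $L^2$ Poincaré inequality not to $f$ directly but to $g = |f|^{p/2}\operatorname{sgn}(f)$ (or to $|f|^{p/2}$), giving
$$\Var_\mu(g) \leq \int |\nabla g|^2\,d\mu = \frac{p^2}{4}\int |f|^{p-2}|\nabla f|^2\,d\mu.$$
Since $\mathbb{E}[g^2] = \|f\|_p^p$ and $\Var_\mu(g) = \|f\|_p^p - (\mathbb{E} g)^2$, one gets $\|f\|_p^p \leq (\mathbb{E} g)^2 + \frac{p^2}{4}\int |f|^{p-2}|\nabla f|^2\,d\mu$. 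The term $\int |f|^{p-2}|\nabla f|^2\,d\mu$ is handled by Hölder's inequality with exponents $p/(p-2)$ and $p/2$, yielding $\||\nabla f|\|_p^2\, \|f\|_p^{p-2}$, so that after dividing by $\|f\|_p^{p-2}$ one obtains a recursive control of $\|f\|_p^2$ in terms of $\||\nabla f|\|_p^2$ and a lower-order moment $\mathbb{E}[|f|^{p/2}\operatorname{sgn} f]^2$.

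Next I would control the leftover term $(\mathbb{E} g)^2$. Since $f$ is centered, $\mathbb{E}[|f|^{p/2}\operatorname{sgn} f]$ is not obviously small, so it is cleaner to run the argument so that the lower-order term is a genuinely smaller $L^q$ norm of $f$ with $q<p$; then one iterates, halving (or otherwise decreasing) the exponent at each step, down to the base case $p\le 2$ where the ordinary Poincaré inequality $\|f\|_2 \le \||\nabla f|\|_2$ applies (and for $1\le p\le 2$ one uses $\|f\|_p \le \|f\|_2$ and Jensen on the gradient side, $\||\nabla f|\|_2 \le \||\nabla f|\|_p$, noting $\mu$ is a probability measure). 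Keeping track of the constants through the iteration is what produces the explicit factor $2p$; one checks by induction that the recursion $c_p \le$ (something like) $\frac{p}{2}\sqrt{1 + c_{p/2}^2/\text{(stuff)}}$ closes with $c_p \le 2p$, possibly after slightly reorganizing the Hölder split (e.g. using Young's inequality $ab \le \varepsilon a^2 + b^2/(4\varepsilon)$ to absorb the $\|f\|_p^{p-2}$ factor cleanly). An alternative, perhaps smoother, approach avoiding iteration: use the known fact that CD$(1,\infty)$ implies a log-Sobolev inequality, hence exponential integrability and the Herbst-type moment bounds $\|f - \mathbb{E}f\|_p \le C\sqrt{p}\,\||\nabla f|\|_\infty$-style estimates — but since we want $\||\nabla f|\|_p$ on the right, not $\||\nabla f|\|_\infty$, the power-substitution method above is the natural one.

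The main obstacle I anticipate is the bookkeeping of constants: getting exactly the clean bound $2p$ (rather than some uglier $C_p$) requires choosing the Hölder/Young split optimally and verifying the induction hypothesis propagates. The analytic content — that $g = |f|^{p/2}\operatorname{sgn} f$ lies in $W^{1,2}$ with $|\nabla g| = \tfrac{p}{2}|f|^{p/2-1}|\nabla f|$ — needs the chain rule for minimal weak upper gradients on RCD spaces, which is standard (it follows from the locality and chain-rule properties of the Cheeger energy, and a truncation argument to handle the non-Lipschitz nonlinearity $t\mapsto |t|^{p/2}$ near $0$ and at infinity, using that $f\in L^p$). Once that is in place, the rest is elementary inequalities, so the real work is purely in optimizing the constant propagation.
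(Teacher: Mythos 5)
There is a genuine gap in the range $1 \leq p < 2$, which is precisely the case the paper needs most. Your reduction of the base case reads ``$\|f\|_p \le \|f\|_2$ and $\||\nabla f|\|_2 \le \||\nabla f|\|_p$'', but on a probability space the second inequality goes the wrong way: $L^q$ norms are non-decreasing in $q$, so $\||\nabla f|\|_2 \ge \||\nabla f|\|_p$ for $p<2$, and the chain $\|f\|_p \le \|f\|_2 \le \||\nabla f|\|_2$ does not produce a bound by $\||\nabla f|\|_p$. This is not a bookkeeping issue: the $p=1$ statement is the Cheeger inequality, and it is well known that an $L^2$ Poincar\'e inequality (spectral gap) alone does not imply an $L^1$ Poincar\'e inequality, so no amount of power-substitution starting from the spectral gap can reach $p\in[1,2)$. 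One genuinely needs the isoperimetric content of the curvature condition --- e.g.\ the Bakry--Ledoux/Ambrosio--Mondino Gaussian isoperimetric inequality (Theorem \ref{IIn}) combined with the coarea formula, or the pointwise commutation $|\nabla P_t f| \le e^{-t}P_t|\nabla f|$ --- which is exactly what the reference the paper cites for this proposition, \cite[Proposition 2.5]{Mil09}, exploits. Note also that the paper applies Proposition \ref{prop_p_poincare} with $p\in[1,2)$, in particular $p=1$, inside Lemma \ref{keylemma} (and hence Lemma \ref{EstNeed1} and Theorem \ref{thm_gaussian_component_eigen}), so the missing range is the one that actually matters downstream.

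For $p \ge 2$ your scheme does work, and differs from the paper, whose ``proof'' is simply the citation to \cite{Mil09}: applying the spectral gap to $g=|f|^{p/2}\operatorname{sgn} f$, using the chain rule and H\"older as you describe, gives $\|f\|_p^p \le \|f\|_{p/2}^p + \tfrac{p^2}{4}\|f\|_p^{p-2}\||\nabla f|\|_p^2$, and if you stop the descent at exponent $2$ (i.e.\ for $p\in[2,4)$ bound $\|f\|_{p/2}\le\|f\|_2\le\||\nabla f|\|_2\le\||\nabla f|\|_p$, which is the correct direction since $p\ge 2$) the induction with constant $c_p=2p$ closes: if $t=\|f\|_p/\||\nabla f|\|_p>2p$ then $t^{p-2}(t^2-\tfrac{p^2}{4})\ge \tfrac{15}{16}(2p)^p > p^p \ge \|f\|_{p/2}^p/\||\nabla f|\|_p^p$, a contradiction. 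So your argument, suitably repaired, is a clean self-contained proof for $p\ge 2$ (the chain-rule and truncation points you flag are indeed standard in RCD spaces), but you must supply a separate argument --- isoperimetry plus coarea, or the cited result of Milman --- for $1\le p<2$.
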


This inequality is strongest for $p = 1$, where it corresponds to the classical Cheeger inequality. See for example \cite[Proposition 2.5]{Mil09} for a proof. The constant in this formulation is not sharp, since for $p=2$ this is not the best estimate on the spectral gap. 

When considering eigenfunctions in positive curvature, we have nice integrability estimates in all $L^p$ spaces, which are summarized in the following proposition: 

\begin{prop}[Integrability of eigenvectors] \label{integ_eigen}
Assume that curvature is bounded from below by $1$. Let $f$ be an eigenfunction of $-{\Delta}$ with eigenvalue $\lambda$ and normalized so that $||f||_2 = 1$. Then
\begin{enumerate}
\item $||f||_p \leq (p-1)^{\lambda/2}$ for any $p \geq 2$; 

\item $|||\nabla f|||_p \leq 2^{\lambda }||f||_{2p}^2 \leq (4p-2)^{\lambda}$ for any $p \geq 1$. 
\end{enumerate}
\end{prop}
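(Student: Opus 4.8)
The plan is to prove the two estimates in turn, using the hypercontractivity of the Ornstein--Uhlenbeck-type semigroup $P_t = e^{t\Delta}$, which is available here because the $\mathrm{RCD}(1,\infty)$ condition implies a logarithmic Sobolev inequality with the sharp constant $1$, and hence (by Gross's theorem, which holds in this Dirichlet-form generality) the semigroup satisfies $\|P_t h\|_{q(t)} \le \|h\|_2$ for $q(t) = 1 + e^{2t}$. First, for item (1): if $\Delta f = -\lambda f$ then $P_t f = e^{-\lambda t} f$, so for any $p \ge 2$, choosing $t$ with $q(t) = p$, i.e. $e^{2t} = p-1$, gives
$$
\|f\|_p = e^{\lambda t}\|P_t f\|_p = e^{\lambda t}\|P_t f\|_{q(t)} \le e^{\lambda t}\|f\|_2 = (p-1)^{\lambda/2}.
$$
This is the standard Gaussian-type moment bound for eigenfunctions and should go through verbatim once the log-Sobolev/hypercontractivity package is cited (e.g. from \cite{AGS14} or the references therein).

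For item (2), the key identity is the Bakry--\'Emery integration by parts: since $f$ is an eigenfunction, $\int |\nabla f|^2 \, d\mu = -\int f \Delta f \, d\mu = \lambda \|f\|_2^2$, but we need an $L^p$ control on $|\nabla f|$ itself, not its $L^2$ norm. The route I would take is to use the pointwise gradient estimate for the semigroup that follows from $\mathrm{RCD}(1,\infty)$, namely $|\nabla P_t h|^2 \le e^{-2t} P_t(|\nabla h|^2)$ (the Bakry--\'Emery $\Gamma_2$ estimate in the weak form valid on $\mathrm{RCD}$ spaces), applied to $h = f$ at a well-chosen time. Alternatively — and this is cleaner — write, for $t>0$,
$$
|\nabla f| = e^{\lambda t} |\nabla P_t f| \le e^{\lambda t} e^{-t} \bigl(P_t(|\nabla f|^2)\bigr)^{1/2} = e^{(\lambda-1)t}\bigl(P_t(|\nabla f|^2)\bigr)^{1/2},
$$
so that $\||\nabla f|\|_p \le e^{(\lambda-1)t} \| (P_t(|\nabla f|^2))^{1/2}\|_p = e^{(\lambda-1)t}\|P_t(|\nabla f|^2)\|_{p/2}^{1/2}$. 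Now apply hypercontractivity to the function $|\nabla f|^2$: choosing $t$ with $1 + e^{2t} = p$ gives $\|P_t(|\nabla f|^2)\|_{p/2}^{1/2} \le \|P_t(|\nabla f|^2)\|_{p}^{1/2}$... this needs care since $p/2 < p$. The honest choice is $e^{2t} = 2p-1$ so that $q(t) = 2p$ dominates the exponent we are hitting, giving $\|P_t(|\nabla f|^2)\|_{p} \le \||\nabla f|^2\|_2 = \||\nabla f|\|_4^2$; combined with the prefactor $e^{(\lambda-1)t} = (2p-1)^{(\lambda-1)/2}$ this is not quite the stated form. I would instead apply hypercontractivity directly in the form $\|P_t h\|_{q(t)} \le \|h\|_2$ with $h = |\nabla f|$, $q(t) = 2p$, $e^{2t} = 2p-1$, yielding
$$
\||\nabla f|\|_p \le \|P_t |\nabla f|\cdot e^{(\lambda-1)t}\|_{p} \le e^{(\lambda-1)t}\|P_t|\nabla f|\|_{2p} \le e^{(\lambda-1)t}\||\nabla f|\|_2,
$$
but $\||\nabla f|\|_2 = \sqrt\lambda$ and the exponent $(2p-1)^{(\lambda-1)/2}$ is slightly off from $2^\lambda$. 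So the precise bookkeeping of which function one feeds into hypercontractivity versus which into the gradient contraction is the thing to get right; the final displayed inequality $\||\nabla f|\|_p \le 2^{\lambda}\|f\|_{2p}^2 \le (4p-2)^{\lambda}$ suggests the intended argument first reduces $|\nabla f|$ to $|\nabla f|^2$, bounds $\|\,|\nabla f|^2\,\|_p = \||\nabla f|\|_{2p}^2$ via the $\Gamma_2$-type estimate $|\nabla f|^2 \le e^{-2t}P_t(|\nabla f|^2)$ trading the factor $e^{-2t}$ against $e^{2\lambda t}$ to get a $2^\lambda$-type constant, and then invokes item (1)-style control on $\|f\|_{2p}$ — I would reconstruct exactly that chain, then substitute the bound from (1) to get $\||\nabla f|\|_{2p}^2 \le ((2p-1)\cdots)^\lambda$ cleaned up to $(4p-2)^\lambda$.

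The main obstacle I anticipate is not conceptual but technical: making the semigroup manipulations ($|\nabla P_t f| \le e^{-t}\sqrt{P_t|\nabla f|^2}$, pointwise $\mu$-a.e., and the commutation $\partial_t P_t f = \Delta P_t f$) rigorous in the non-smooth $\mathrm{RCD}$ setting where $f$ and $|\nabla f|$ need not be Lipschitz, and tracking the constants so that the somewhat lossy numerical factors ($2p$ in Proposition~\ref{prop_p_poincare}, $2^\lambda$ here) come out in the stated form rather than a sharper or messier one. I would handle the regularity issue by citing the Bakry--\'Emery gradient estimate for $\mathrm{RCD}(K,\infty)$ spaces (e.g.\ \cite{AGS14,AGS14b}) in its established weak form and the hypercontractivity/log-Sobolev equivalence (Gross), both of which are standard for these spaces, and then the constants are purely a matter of optimizing the choice of time $t$.
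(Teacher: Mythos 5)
Your proof of item (1) is fine and is exactly the standard hypercontractivity argument that the paper simply cites from \cite{BGL14} (Section 5.3): since $P_tf=e^{-\lambda t}f$, hypercontractivity with $e^{2t}=p-1$ gives $\|f\|_p\leq (p-1)^{\lambda/2}$.

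For item (2), however, there is a genuine gap, and you essentially acknowledge it: none of the chains you write actually produces the stated bound. The route through the Bakry--\'Emery commutation $|\nabla P_tf|^2\leq e^{-2t}P_t(|\nabla f|^2)$ is circular for this purpose: after using $P_tf=e^{-\lambda t}f$ it bounds $|\nabla f|^2$ by $e^{2(\lambda-1)t}P_t(|\nabla f|^2)$, so the gradient still sits on the right-hand side and an $L^p$ bound on $|\nabla f|$ would already be needed to conclude. Your attempted fix, $\||\nabla f|\|_p\leq e^{(\lambda-1)t}\|P_t|\nabla f|\,\|_{2p}\leq e^{(\lambda-1)t}\||\nabla f|\|_2$, is not valid either: the pointwise estimate gives $|\nabla f|\leq e^{(\lambda-1)t}(P_t(|\nabla f|^2))^{1/2}$, and by Jensen $(P_t(|\nabla f|^2))^{1/2}\geq P_t|\nabla f|$, so you cannot pass to $P_t|\nabla f|$; moreover $|\nabla f|$ is not an eigenfunction, so the semigroup cannot be removed afterwards. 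The missing ingredient is the \emph{strong} gradient bound $|\nabla P_tf|^2\leq (e^{2t}-1)^{-1}P_t(f^2)$ (\cite[Theorem 4.7.2]{BGL14}, the reverse local Poincar\'e-type estimate), whose right-hand side involves $f$ itself rather than its gradient. This is what the paper uses: taking $t=(\ln 2)/2$ yields the pointwise bound $|\nabla f|^2\leq 2^{\lambda}P_t(f^2)$, then raising to the power $p$, applying Jensen's inequality for $P_t$, and integrating against $\mu$ (which is invariant under $P_t$) gives $\||\nabla f|\|_{2p}^2\leq 2^{\lambda}\|f\|_{2p}^2$, whence the stated inequality; plugging in item (1) at exponent $2p$ gives $2^{\lambda}(2p-1)^{\lambda}=(4p-2)^{\lambda}$. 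Without this estimate your bookkeeping cannot close, as your own computation of the mismatched constants shows.
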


\begin{proof}
The proof of (i) can be found in \cite[Section 5.3]{BGL14}. 

To prove (ii), we use the gradient bound  $|\nabla P_tf |^2 \leq (e^{2t} -1)^{-1}P_t(f^2)$ (see \cite[Theorem 4.7.2]{BGL14}). We apply it to the eigenfunction $f$ with $t= (\ln 2)/2$ to get
$$\Gamma(f)^p \leq \left(2^{\lambda }P_t(f^2)\right)^p \leq 2^{\lambda p}P_t(f^{2p})$$
and then integrate with respect to the reference measure $\mu$ to conclude. 
\end{proof}

The key lemma in our proofs is the following result, stating that the gradient of the eigenfunction is almost constant when $\lambda$ is close to one, with quantitative bounds in all $L^p$ norms. A version of this result in the Euclidean setting was derived in \cite{DPF17}, and proved using optimal transport and Caffarelli's contraction theorem. The proof we present here, in addition to being more general, is simpler even in the Euclidean setting. 

\begin{lem}[Key Lemma]\label{keylemma}
Let $f$ be an eigenfunction of $-\Delta$ associated with eigenvalue $\lambda \leq 1+\epsilon$, with $\int{fd\mu} = 0$, $\int{f^2d\mu} = 1$. Then for any $p \in [1, 2[$, we have
$$|||\nabla f|^2 - \lambda||_p \leq 4p(\lambda-1)^{1/2}\lambda^{1/2}\left(\frac{6p-4}{2-p}\right)^{\lambda/2}. $$

Assume now $\lambda \leq \bar{\lambda}$ for some fixed threshold $\bar{\lambda}$. Then for any $\theta \in (0, 1/2)$ and $p \geq 2$ there exists $C = C(p, \theta, \bar{\lambda})$ such that 
$$|||\nabla f|^2 - \lambda||_p \leq C(\lambda - 1)^{1/2 - \theta}.$$
\end{lem}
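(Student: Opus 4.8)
The plan is to start from the Bochner-type identity / integration-by-parts computation that governs $\||\nabla f|^2 - \lambda\|_2$, since that is where the curvature hypothesis $\mathrm{RCD}(1,\infty)$ enters. Concretely, for an eigenfunction $f$ with $-\Delta f = \lambda f$, a standard computation (using the $\Gamma_2$ inequality $\Gamma_2(f) \geq \Gamma(f)$ in integrated form, i.e. $\int (\Delta f)^2 d\mu \geq \int |\nabla f|^2 d\mu + \int |\nabla f|^2 d\mu$-type Bochner bound) yields that
$$\int \Big(|\nabla f|^2 - \lambda\Big)\, d\mu = \lambda - \lambda = 0$$
up to the normalization, while the "energy" of the defect is controlled by $\lambda(\lambda-1)$: one gets $\int |\nabla|\nabla f|^2|$-type quantities or, more simply, an $L^2$-type bound of the form $\||\nabla f|^2 - \lambda\|_{?} \lesssim \sqrt{\lambda(\lambda-1)}$ after squaring $\lambda f^2$ against $|\nabla f|^2$ and using $\int f^2 |\nabla f|^2 d\mu$ identities. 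The first thing I would do is pin down precisely this base estimate — the natural target is an $L^1$ (or weak-$L^2$) bound
$$\big\| |\nabla f|^2 - \lambda \big\|_{1} \;\leq\; c\,\sqrt{\lambda-1}\,\sqrt{\lambda},$$
obtained by writing $|\nabla f|^2 - \lambda = (|\nabla f|^2 - \lambda f^2) + \lambda(f^2-1)$ — wait, that is not integrable-friendly; rather I would use the identity $\int \langle \nabla f, \nabla(f\,g)\rangle d\mu = \lambda \int f^2 g\, d\mu$ with $g$ chosen cleverly, plus Bochner, to extract $\int (|\nabla f|^2-\lambda)^2$-control against lower-order terms. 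This is the genuinely curvature-dependent heart of the argument and the place I expect to spend the most care.

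For the first displayed inequality ($p \in [1,2[$): having the base $L^1$ or $L^{1+}$ bound $\||\nabla f|^2-\lambda\|_{\text{low}} \lesssim \sqrt{\lambda(\lambda-1)}$, I would interpolate against a high-$L^q$ bound on $|\nabla f|^2 - \lambda$ coming from Proposition \ref{integ_eigen}(ii): indeed $\||\nabla f|^2\|_q = \||\nabla f|\|_{2q}^2 \leq (4q-2)^{\lambda}$, so $\||\nabla f|^2 - \lambda\|_q \leq (4q-2)^\lambda + \lambda$. Then Hölder/log-convexity of $L^p$ norms, $\|h\|_p \leq \|h\|_1^{1-\beta}\|h\|_q^{\beta}$ with the exponent $\beta = \beta(p,q)$ chosen so $p$ sits between $1$ and $q$, converts the $\sqrt{\lambda-1}$ from the $L^1$ factor into $(\lambda-1)^{(1-\beta)/2}$. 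Choosing $q$ appropriately — the factor $\frac{6p-4}{2-p}$ in the statement strongly suggests taking $q$ so that the relevant interpolation/Poincaré step (applying Proposition \ref{prop_p_poincare} to $|\nabla f|$ at exponent tied to $\frac{2p}{2-p}$ or similar) produces exactly that constant — gives the claimed bound with explicit constant $4p(\lambda-1)^{1/2}\lambda^{1/2}\left(\frac{6p-4}{2-p}\right)^{\lambda/2}$. I would reverse-engineer the precise choice of $q$ from that target constant rather than guess.

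For the second displayed inequality ($p \geq 2$, with a loss $\theta$): here one cannot interpolate "downward" to get a clean power, so instead I would interpolate the fixed $p$ between the low exponent $p_0 \in (1,2)$ just treated and a large exponent $P \gg p$. Using $\||\nabla f|^2-\lambda\|_{p_0} \leq C(p_0,\bar\lambda)(\lambda-1)^{1/2}$ from the first part (valid since $\lambda \leq \bar\lambda$ bounds the exponential prefactor by a constant) and $\||\nabla f|^2-\lambda\|_P \leq (4P-2)^{\bar\lambda} + \bar\lambda =: C(P,\bar\lambda)$, log-convexity gives $\||\nabla f|^2-\lambda\|_p \leq C(p_0,\bar\lambda)^{1-\beta}\,C(P,\bar\lambda)^{\beta}\,(\lambda-1)^{(1-\beta)/2}$ where $\frac1p = \frac{1-\beta}{p_0} + \frac{\beta}{P}$. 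As $P \to \infty$ with $p_0 \to 1$, the exponent $(1-\beta)/2 \to 1/2$, and for any $\theta \in (0,1/2)$ one can choose $P = P(p,\theta)$ and $p_0 = p_0(p,\theta)$ making $(1-\beta)/2 \geq 1/2 - \theta$; absorbing everything (including the now-$\lambda$-independent exponential factors, legitimate because $\lambda \leq \bar\lambda$, and using $(\lambda-1)^{(1-\beta)/2} \leq (\lambda-1)^{1/2-\theta}$ when $\lambda - 1$ is small, or just $\leq (\bar\lambda-1)^{\cdots}(\lambda-1)^{1/2-\theta}$) into a single constant $C(p,\theta,\bar\lambda)$ finishes the proof. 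The only subtlety to check is that $(\lambda-1)^{(1-\beta)/2} \leq C(\lambda-1)^{1/2-\theta}$ uniformly for $\lambda \in [1,\bar\lambda]$, which holds since the ratio is $(\lambda-1)^{\theta-\beta/2+\cdots} \leq (\bar\lambda-1)^{(\text{nonneg exponent})}$ once $P, p_0$ are fixed so that $(1-\beta)/2 \geq 1/2-\theta$.
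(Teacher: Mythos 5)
There is a genuine gap at the heart of your plan: the base estimate $\||\nabla f|^2-\lambda\|_1\lesssim\sqrt{\lambda(\lambda-1)}$ is exactly the curvature-dependent content of the lemma, and you never produce it. The integrated Bochner/$\Gamma_2$ inequality only gives $\int(\Gamma_2(f)-|\nabla f|^2)\,d\mu=\lambda^2-\lambda$, i.e.\ $L^1$-smallness of $\Gamma_2(f)-|\nabla f|^2$, which is not the same as smallness of $|\nabla f|^2-\lambda$; your attempted decomposition $(|\nabla f|^2-\lambda f^2)+\lambda(f^2-1)$ fails (as you yourself note), and ``integration by parts against $fg$ with $g$ chosen cleverly'' is left entirely unexecuted. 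The missing ingredient is the self-improved gradient estimate $|\nabla|\nabla f|^2|\le 2|\nabla f|\sqrt{\Gamma_2(f)-|\nabla f|^2}$ (Savar\'e's theorem in the RCD setting, applied to the eigenfunction by an approximation argument since $f$ need not be Lipschitz). That pointwise bound is what converts the Bochner information into control of $\nabla(|\nabla f|^2)$: H\"older with exponents $2/p$ and $2/(2-p)$ gives $\||\nabla|\nabla f|^2|\|_p\le 2\big(\lambda(\lambda-1)\big)^{1/2}\|\,|\nabla f|^2\|_{p/(2-p)}^{1/2}$, Proposition \ref{integ_eigen}(ii) applied with exponent $p/(2-p)$ produces precisely the factor $\big(\tfrac{6p-4}{2-p}\big)^{\lambda/2}$ (it does not come from any interpolation between $L^1$ and a high $L^q$ norm, which is what you proposed to reverse-engineer), and finally the $L^p$ Poincar\'e inequality of Proposition \ref{prop_p_poincare} applied to the function $|\nabla f|^2$, whose mean is $\lambda$, yields the first display.

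A second, related problem is quantitative: even if you were granted the base $L^1$ bound, your interpolation scheme for $p\in(1,2)$ cannot reach the stated inequality. Interpolating between an $L^1$ bound of size $\sqrt{\lambda-1}$ and an $O(1)$ bound in some $L^q$ gives only $(\lambda-1)^{(1-\beta)/2}$ with $\beta>0$ depending on $p$, strictly weaker than the exponent $1/2$ claimed for every $p<2$; the paper keeps the full square root for all $p\in[1,2)$ precisely because the H\"older splitting is done inside the gradient bound and the Poincar\'e inequality is then applied directly in $L^p$. Your treatment of the case $p\ge 2$ (interpolation between a low exponent and a large one, absorbing constants using $\lambda\le\bar\lambda$) does coincide with the paper's argument and is fine, but it rests on the first part, which as written is not proved.
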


\begin{proof}

In the non-smooth setting, under the curvature condition we have the following pointwise estimate of \cite[Theorem 3.4]{Sav15} (generalizing a result of \cite{Bak83, Bak94}) that holds almost everywhere: 
\begin{equation}
|\nabla |\nabla f|^2| \leq 2|\nabla f|\sqrt{\Gamma_2(f) - |\nabla f|^2}
\end{equation}
This is only proved for globally Lipschitz $f$ in \cite{Sav15}, but we shall only use its integrated version, so we can proceed by approximation, by taking $h_{n,t} = P_t \max(-n, \min(f, n))$ for $t, n > 0$, which is globally lipschitz (see for example \cite[Proposition 2.5]{GKKO}), and then letting $n$ go to infinity, then using the fact that $f$ is an eigenfunction so $P_t f = e^{-\lambda t}f$ to let $t$ go to zero. In the smooth setting, $|\nabla |\nabla f|^2|^2 \leq 4||\operatorname{Hes} f||_2^2 |\nabla f|^2$. A version of this estimate involving Hessians in the nonsmooth context was investigated in \cite{Gig18}. 

Therefore, using Holder's inequality, ad denoting by $\Gamma_2$ the usual operator
$$\Gamma_2(h) = \frac{1}{2}L\Gamma(h) - \Gamma(h, Lh),$$
we have for $1 \leq p < 2$
\begin{align*}
|||\nabla |\nabla f|^2|||_p^p &\leq 2^p\left(\int{(\Gamma_2(f) - |\nabla f|^2)d\mu}\right)^{p/2}\left(\int{|\nabla f|^{2p/(2-p)}d\mu}\right)^{(2-p)/2} \\
&\leq 2^p(\lambda-1)^{p/2}\left(\int{|\nabla f|^2d\mu}\right)^{p/2}\left(\int{|\nabla f|^{2p/(2-p)}d\mu}\right)^{(2-p)/2} \\
&\leq 2^p(\lambda-1)^{p/2}\lambda^{p/2}\left(\frac{6p-4}{2-p}\right)^{\lambda p/2}
\end{align*}
To conclude, we just apply the $L^p$ Poincar\'e inequality to $|\nabla f|^2$. 

To obtain the inequality for $p \geq 2$, using Holder's inequality we have
$$|||\nabla f|^2 - \lambda||_p \leq |||\nabla f|^2-\lambda||_1^{1-\theta}|||\nabla f|^2-\lambda||_{p'}^{\theta} \leq |||\nabla f|^2-\lambda||_1^{1-\theta}(\lambda + |||\nabla f|^2||_{p'})^{\theta}$$
with $p'$ suitably large enough, and we then apply Proposition \ref{integ_eigen}. 
\end{proof}

%%%%%%%%%%%%%%%%%%%%%%%%%%%%%
%%%%%%%%%%%%%%%%%%%%%%%%%%%%%
\subsection{Spectral properties}

%The following lemma is useful if we assume the spectrum is discrete. A result of F.-Y. Wang \cite{Wan00} implies it is true here, via the LSI, but the proof should be checked (does it apply to general spaces, or just smooth ones with finite dimension...)

Let us start with the following general lemma: 

\begin{lem} \label{perturbed_eigen} Given $\mathcal{L}$ an unbounded operator on $L^2(\mu)$ with discrete spectrum. Assume that there is a function $f$ such that $\mathcal{L}f = \alpha f + g$ with $||f||_2 = 1$ and $||g||_2 \leq \epsilon$. Then there is an eigenvalue of $\mathcal{L}$ at distance at most $\epsilon$ of $\alpha$. 
\end{lem}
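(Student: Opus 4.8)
The statement is a standard fact from spectral theory, and the plan is to exploit the spectral decomposition of the (self-adjoint) operator $\mathcal{L}$. First I would decompose $f$ along the spectrum of $\mathcal{L}$: since $\mathcal{L}$ has discrete spectrum, write $f = \sum_k c_k e_k$ where $(e_k)$ is an orthonormal basis of eigenvectors, $\mathcal{L}e_k = \mu_k e_k$, and $\sum_k c_k^2 = \|f\|_2^2 = 1$. Then $\mathcal{L}f - \alpha f = \sum_k (\mu_k - \alpha)c_k e_k = g$, so by Parseval
$$\sum_k (\mu_k - \alpha)^2 c_k^2 = \|g\|_2^2 \leq \epsilon^2.$$
Now argue by contradiction: if every eigenvalue $\mu_k$ were at distance strictly greater than $\epsilon$ from $\alpha$, then $(\mu_k - \alpha)^2 > \epsilon^2$ for all $k$, whence
$$\epsilon^2 \geq \sum_k (\mu_k - \alpha)^2 c_k^2 > \epsilon^2 \sum_k c_k^2 = \epsilon^2,$$
a contradiction. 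Hence there is an eigenvalue within distance $\epsilon$ of $\alpha$.

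One should be slightly careful about the hypotheses. The argument above uses that $\mathcal{L}$ is self-adjoint (or at least normal) so that a spectral/orthonormal eigenbasis exists; in the intended application $\mathcal{L} = \Delta$ (or a shift/scaling of it) on $L^2(\mu)$, which is indeed self-adjoint with discrete spectrum by \cite[Proposition 6.7]{GMS15} and the surrounding discussion, so this is legitimate. I would state this self-adjointness assumption explicitly (or note it is implicit in "$\mathcal{L}$ an unbounded operator on $L^2(\mu)$ with discrete spectrum" as used here). One also needs $f \in D(\mathcal{L})$ so that $\mathcal{L}f$ is well-defined, which is part of the hypothesis "$\mathcal{L}f = \alpha f + g$"; this guarantees $\sum_k \mu_k^2 c_k^2 < \infty$, so all the sums above converge absolutely and the manipulations are justified.

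There is essentially no serious obstacle here — the only mild subtlety is handling a possible accumulation of the spectrum or an infinite eigenbasis, but the contradiction argument via Parseval sidesteps any need to locate the closest eigenvalue explicitly, and works verbatim whether the spectrum is finite or countably infinite (as long as it is discrete, so that "distance from $\alpha$ to the spectrum" is attained or at least the infimum being positive forces a uniform gap $>\epsilon$). If one prefers to avoid the contradiction, the same computation shows directly that $\operatorname{dist}(\alpha, \operatorname{spec}(\mathcal{L}))^2 \sum_k c_k^2 \leq \sum_k (\mu_k-\alpha)^2 c_k^2 \leq \epsilon^2$, giving $\operatorname{dist}(\alpha,\operatorname{spec}(\mathcal{L})) \leq \epsilon$ immediately.
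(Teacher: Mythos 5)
Your proof is correct and follows essentially the same route as the paper: expand $f$ and $g$ in an orthonormal eigenbasis, use Parseval to get $\sum_k(\mu_k-\alpha)^2c_k^2\le\epsilon^2$, and conclude from $\sum_k c_k^2=1$ that some eigenvalue lies within $\epsilon$ of $\alpha$ (your contradiction step is just a rephrasing of the paper's direct conclusion). Your added remark that self-adjointness of $\mathcal{L}$ is implicitly used to get the orthonormal eigenbasis is a fair observation, but the argument is the same.
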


\begin{proof}
Let $\alpha_i$ be the collection of eigenvalues of $\mathcal{L}$, with associated orthonormal basis of eigenfunctions $f_i$. We can write $f = \sum \beta_i f_i$ and $g = \sum \delta_if_i$ with $\sum \beta_i^2 = 1$ and $\sum \delta_i^2 \leq \epsilon^2$. Then 
$$\sum (\alpha_i -\alpha)\beta_if_i =  \sum \delta_i f_i$$
and hence for any $i$, $(\alpha_i -\alpha)\beta_i = \delta_i$, so that $\sum (\alpha_i -\alpha)^2\beta_i^2 \leq \epsilon^2$. But since $\sum \beta_i^2 = 1$, there is at least one index $i$ such that $|\alpha_i - \alpha| \leq \epsilon$. 
\end{proof}

\smallskip

With all this in hand, we can now prove Theorem \ref{main_thm_rcd}: 

\begin{proof}[Proof of Theorem \ref{main_thm_rcd}]
Consider $H_n$ the $n$-th Hermitte polynomial, defined via $H_0 = 1$, $H_1(x) = x$ and $H_{n+1}(x) = xH_n(x) - nH_{n-1}(x)$. Recall that $H_n''(x) - xH_n'(x) = -nH_n(x)$, and that they form an orthogonal basis of eigenfunctions for the one-dimensional Ornstein-Uhlenbeck generator.

Let $f$ be a normalized eigenfunction of $-\Delta$ with eigenvalue $\lambda$. From the diffusion property \cite[Section 2.2]{Sav15}, 
\begin{equation}
{\Delta}H_n(f) = H_n'(f){L}f + H_n''(f)|\nabla f|^2 = -\lambda n H_n(f) + H_n''(f)(|\nabla f|^2 - \lambda). 
\end{equation}

According to Lemma \ref{keylemma},   we have a suitable $L^p$ bound on $|\nabla f|^2 - \lambda$ for all $p$, and $H_n''(f)$ has controlled $L^p$ norm via Proposition \ref{integ_eigen}, we control the $L^2$ norm of $H_n''(f)(|\nabla f|^2 - \lambda)$ by some $C(n, \theta, \bar{\lambda})(\lambda-1)^{1/2-\theta}$, and can then apply Lemma \ref{perturbed_eigen} to conclude. 
\end{proof}

\subsection{Proof of Theorem \ref{thm_gaussian_component_eigen}}

We shall now prove that the pushforward of the measure by the eigenfunction is approximately Gaussian, generalizing part of the main result of \cite{CF20} to the RCD setting. The proof will be based on (a version of) Stein's lemma, a classical result in probability and statistics, which allows to control distances to a Gaussian target measure via deficits in an integration-by parts formula. This was already the strategy used in \cite{Mec09,CF20}. We refer to \cite{Ros11} for the proof, and an introduction to Stein's method. We could also proceed by directly combining the main abstract theorem of \cite{Mec09} with Lemma \ref{keylemma}. 

\begin{lem}
Let $\gamma$ be the standard Gaussian measure on $\R$. Then for any $\mu \in \mathcal{P}(\R)$, we have
$$\max\left( d_{TV}(\mu, \gamma), W_1(\mu, \gamma) \right)\leq \sup_{||f||_{\infty}, ||f'||_{\infty} \leq 4} \int{f' - xf d\mu}. $$
\end{lem}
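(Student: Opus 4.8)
The plan is to reduce the bound to the standard Stein characterization of the Gaussian, applied to two families of test functions: the Lipschitz functions (which control $W_1$) and the indicators of half-lines (which control $d_{TV}$). The classical fact I would invoke is the following: for a given $1$-Lipschitz function $\varphi$ (resp. a given measurable set $A$), the Stein equation
$$ f_\varphi'(x) - x f_\varphi(x) = \varphi(x) - \int \varphi \, d\gamma $$
(resp. with $\varphi = \mathbbm{1}_A$ and $\int \varphi\, d\gamma = \gamma(A)$) has an explicit solution $f_\varphi$, given by
$$ f_\varphi(x) = e^{x^2/2}\int_{-\infty}^x \big(\varphi(t) - \textstyle\int \varphi\, d\gamma\big) e^{-t^2/2}\, dt, $$
and this solution satisfies the uniform bounds $\|f_\varphi\|_\infty \le \sqrt{\pi/2}\,\|\varphi - \int\varphi\, d\gamma\|_\infty$ and $\|f_\varphi'\|_\infty \le 2\|\varphi - \int\varphi\,d\gamma\|_\infty$ in the bounded case, and $\|f_\varphi\|_\infty \le 1$, $\|f_\varphi'\|_\infty \le 2$ when $\varphi$ is $1$-Lipschitz. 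These bounds, with the constant $4$ in the statement chosen to dominate both, are exactly Lemmas 2.3–2.5 of \cite{Ros11} (or the analogous statements in the references therein), so I would simply cite them rather than reprove them.

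The main steps are then: (1) recall the two dual formulations $W_1(\mu,\gamma) = \sup\{\int \varphi\, d\mu - \int \varphi\, d\gamma : \varphi \ \text{1-Lipschitz}\}$ and $d_{TV}(\mu,\gamma) = \sup\{\mu(A) - \gamma(A) : A \ \text{Borel}\} = \sup\{\int \varphi\, d\mu - \int\varphi\, d\gamma : 0 \le \varphi \le 1\}$; (2) for a near-optimal $\varphi$ in either supremum, translate it to mean zero under $\gamma$ (which does not change the difference $\int\varphi\, d\mu - \int\varphi\, d\gamma$) and, in the $d_{TV}$ case, note the centered indicator has sup norm at most $1$; (3) plug in the Stein solution $f_\varphi$: by construction $\int\varphi\, d\mu - \int\varphi\, d\gamma = \int (f_\varphi' - x f_\varphi)\, d\mu$, and since $\|f_\varphi\|_\infty, \|f_\varphi'\|_\infty \le 4$ in both cases, this quantity is at most $\sup_{\|f\|_\infty, \|f'\|_\infty \le 4}\int (f' - xf)\, d\mu$; (4) take the supremum over $\varphi$ and then the maximum over the two distances.

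I expect no serious obstacle here, since the hard analytic content — the existence of the Stein solution and its uniform derivative bounds — is entirely off-loaded to \cite{Ros11}. The only points requiring a little care are: verifying that the translation-by-a-constant normalization is legitimate (immediate, since $\gamma$ and the Lipschitz/indicator classes behave well under it, and the right-hand side $\int(f'-xf)\,d\mu$ is unaffected in the relevant way), and checking that the numerical constant $4$ indeed dominates all the bounds $\sqrt{\pi/2}$, $2$, $1$ appearing for the two test-function classes — which it does. A minor subtlety is that the Stein solution $f_\varphi$ for an indicator is only Lipschitz, not $C^1$, so $f_\varphi'$ should be read as the (a.e.-defined, bounded) derivative and the identity $\int(f_\varphi' - x f_\varphi)\,d\mu = \int\varphi\,d\mu - \int\varphi\,d\gamma$ holds by the defining ODE holding a.e.; this is standard and I would mention it in one line.
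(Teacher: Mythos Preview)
Your proposal is correct and follows the standard Stein's method argument. The paper itself gives no proof of this lemma: it simply states ``We refer to \cite{Ros11} for the proof, and an introduction to Stein's method,'' so your sketch is in fact more detailed than the paper's treatment while citing the same source for the key analytic bounds on the Stein solution.
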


Here $W_1$ stands for the $L^1$ optimal transport distance, and $d_{TV}$ for the total variation distance on $\mathcal{P}(M)$. 

To keep the statement compact, we have not given the sharp required bounds on $f$ and $f'$ in the integration by parts formula. If we only wish to control $W_1$, we can additionally assume the second derivative to be bounded. Higher-dimensional versions of these estimates are available, although typically getting good estimates on the total variation distance in higher dimension is much harder than estimates on $W_1$, due to regularity issues for solutions to certain Poisson equations.

Let $g : \R \rightarrow \R$ be a $1$-Lipschitz function. According to Stein's lemma,
$$W_1(\nu, \gamma) \leq 4\sup_{\tiny g \,1\mbox{-Lipschitz}} \int{(g' - xg)d\nu}.$$
Actually, the factor $4$ is not necessary, if one uses a version of Stein's lemma with sharper constants. 
We have
\begin{align*}
\int{xg(x)d\nu} &= \int{f(x)g(f(x))d\mu} = -\lambda^{-1}\int{Lf(x) g(f(x))d\mu} \\
&= \lambda^{-1}\int{\Gamma(f, g \circ f)(x)d\mu} = \lambda^{-1}\int{g'(f(x))\Gamma(f)(x)d\mu}
\end{align*}

Hence, using Lemma \ref{keylemma}, we get 
\begin{align*}
\sup_{\tiny g \,1\mbox{-Lipschitz}} \int{(g' - xg)d\nu} &\leq  \lambda^{-1}\int{|\Gamma(f)-\lambda|d\mu} \\
&\leq  4\times 2^{\lambda/2}\sqrt{\lambda -1}. 
\end{align*}

%%%%%%%%%%%%%%%%%%%%%%%%%%%%%%%%%%%%%%%%
%%%%%%%%%%%%%%%%%%%%%%%%%%%%%%%%%%%%%%%%%%%

\section{Spectral gap and observable diameter}

This section aims at proving that for an RCD$(1,\infty)$ space that admits $CD(1,\infty)$ disintegration, the spectral gap is close to that of the Gaussian space if and only if the observable diameter is close to that of the Gauss space.

\subsection{Proof of Theorem \ref{sg_to_diam}}

Let $f$ be a normalized eigenfunction of $-\Delta$, with eigenvalue $\lambda_1$, as in the previous section. We consider the guiding function in the needle decomposition 
\begin{equation}\label{gf} 
g := \underset{\tiny u \,1\mbox{-Lipschitz}}{\argmax}  \hspace{1mm} \int{uf d\mu}.
\end{equation}
Without  loss of generality, we may assume that $g$ is centered. Note that since $f$ is an eigenfunction, $g$ also maximizes $\int{\nabla u \cdot \nabla fd\mu}$.% Moreover, $|\nabla g| = 1$ a.e.

The approach follows in the spirit the same one as in \cite{MO19}. In a first lemma, we estimate the $L^2$-norm of $f$ and $|\nabla f|$ along many needles.

\begin{lem}\label{EstNeed1}
Assume $\lambda_1 \leq 1 + \epsilon$ with $\epsilon \leq 1$. There is a set of needles $Q_{\delta}$ such that $\Q(Q_{\delta}) \geq 1 - \delta$ and for any needle $q \in Q_{\delta}$ we have
$$1 - \frac{48\sqrt{\epsilon} + 2\epsilon}{\delta} \leq \int{f^2dm_q} \leq 1 + \frac{48\sqrt{\epsilon}}{\delta}$$
and
$$\int{f^2dm_q} \leq \int{|\nabla f|^2dm_q} \leq \int{f^2dm_q} + \frac{2\epsilon}{\delta}.$$
\end{lem}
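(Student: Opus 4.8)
The strategy is to exploit the fact that the needle decomposition is adapted to the eigenfunction $f$ (property (4) of Theorem \ref{thm_needle_dec}), so that $f$ is centered on $m_q$-almost every needle, together with the global control on $\int |\nabla f|^2 d\mu = \lambda_1$ and the almost-constancy of $|\nabla f|^2$ furnished by Lemma \ref{keylemma}. The quantity $\int f^2 dm_q$ fluctuates around its average value $1$ (by $\mu(A) = \int m_q(A) d\Q$ with $A = X$, one has $\int \left( \int f^2 dm_q \right) d\Q(q) = \int f^2 d\mu = 1$), so a deviation inequality for this fluctuation is the core estimate. To get it, I would use the one-dimensional Poincaré inequality $\psi_q'' \geq 1$ on each needle (Proposition \ref{prop_struct_needle}): since $\int f dm_q = 0$, we have $\int f^2 dm_q \leq \int |\nabla f|^2 dm_q$. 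This gives the left inequality of the second display for free. For the reverse direction, I would integrate the pointwise Bakry–Émery gradient bound that underlies Lemma \ref{keylemma}, but restricted to the needle: on each one-dimensional needle with $\psi_q'' \geq 1$, the Bochner-type inequality gives $\int |\nabla f|^2 dm_q \leq \int f^2 dm_q + \left( \int (\Gamma_2(f) - |\nabla f|^2) dm_q \right)$-type control. The point is that the \emph{global} excess $\int (\Gamma_2(f) - |\nabla f|^2) d\mu = \lambda_1 - 1 \leq \epsilon$ disintegrates as a nonnegative integrand over needles (one needs that the localized Bochner defect is nonnegative $\Q$-a.e., which follows from $\psi_q'' \geq 1$), so by Markov's inequality the needle-wise defect is $\leq \epsilon/\delta$ off a set of $\Q$-measure $\delta$. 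This yields the right inequality of the second display with constant $2\epsilon/\delta$ after combining with the fluctuation bound.

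For the first display (control of $\int f^2 dm_q$ itself), I would bound the fluctuation $\left| \int f^2 dm_q - 1 \right|$ in $L^1(\Q)$. Write $\int f^2 dm_q - 1 = \int f^2 dm_q - \int |\nabla f|^2 dm_q + \int |\nabla f|^2 dm_q - \lambda_1 + (\lambda_1 - 1)$. The first difference is nonpositive with $L^1(\Q)$-norm $\leq \epsilon$ (from the disintegrated Bochner defect, as above), the middle difference has $L^1(\Q)$-norm $= \int \left| \int (|\nabla f|^2 - \lambda_1) dm_q \right| d\Q \leq \int |\,|\nabla f|^2 - \lambda_1| d\mu \leq 4 \cdot 2^{\lambda_1/2}\sqrt{\lambda_1 - 1} \leq C\sqrt{\epsilon}$ by Lemma \ref{keylemma} with $p = 1$ (using $\lambda_1 \leq 2$, so $2^{\lambda_1/2} \leq 2$ and the constant $4p(\ldots)^{\lambda/2}$ is bounded; numerically this gives something like $48\sqrt{\epsilon}$), and the last term is $\leq \epsilon$. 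Then Markov's inequality applied to the nonnegative random variable $\left| \int f^2 dm_q - 1 \right|$ on $(Q, \Q)$ gives a set $Q_\delta$ of measure $\geq 1 - \delta$ on which it is $\leq (48\sqrt{\epsilon} + 2\epsilon)/\delta$; one can shave the upper bound to $48\sqrt{\epsilon}/\delta$ by noting the one-sided part $\int f^2 dm_q - \int |\nabla f|^2 dm_q \leq 0$ only helps the upper bound. Intersecting the two exceptional sets (and adjusting $\delta$) produces the claimed $Q_\delta$.

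The main obstacle I anticipate is the rigorous justification of the \emph{localized} Bochner inequality: the pointwise estimate $|\nabla|\nabla f|^2| \leq 2|\nabla f|\sqrt{\Gamma_2(f) - |\nabla f|^2}$ and the identity $\int \Gamma_2(f) d\mu = \lambda_1$ hold on the whole space, but to conclude $\int |\nabla f|^2 dm_q \leq \int f^2 dm_q + \epsilon/\delta$ one needs the defect $\Gamma_2(f) - |\nabla f|^2$ to "restrict" correctly to needles, i.e. that $f|_{X_q}$ inherits enough regularity and that $\int_{X_q}(\Gamma_2(f) - |\nabla f|^2) dm_q$ makes sense and integrates back to $\leq \lambda_1 - 1$ over $\Q$. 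This is where the structure from Proposition \ref{prop_struct_needle} — each needle being a genuine one-dimensional weighted interval with $\psi_q'' \geq 1$ — has to be used carefully, presumably via the fact that $g$ restricted to $X_q$ is just the coordinate, so $|\nabla f|^2 = |f'|^2$ along the needle and the one-dimensional Poincaré/Bochner machinery applies directly to $f(t)$ as a function on $(\R, |\cdot|, e^{-\psi_q}dx)$. Once that localization is in place, everything else is Markov's inequality and the triangle inequality.
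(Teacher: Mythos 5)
Your overall architecture matches the paper's: the needle Poincar\'e inequality (valid because $\int f\,dm_q=0$ and each needle is CD$(1,\infty)$) gives $\int f^2dm_q\le\int|\nabla f|^2dm_q$; Lemma \ref{keylemma} with $p=1$ plus Markov controls $\left|\int|\nabla f|^2dm_q-1\right|$ on most needles; and one intersects the exceptional sets. However, the step you route through a ``localized Bochner inequality'' is a genuine gap. You need, on each needle, an inequality of the type $\int|\nabla f|^2dm_q\le\int f^2dm_q+\int(\Gamma_2(f)-|\nabla f|^2)dm_q$, where $\Gamma_2(f)$ is the \emph{ambient} operator. Nothing of this sort is available: the ambient $\Gamma_2$ does not restrict to the one-dimensional structure of a needle (the needle only sees $f$ as a function of the arclength parameter, and the only relation between ambient and needle gradients is $|f'|\le|\nabla f|$, which goes the wrong way for an upper bound on $\int|\nabla f|^2dm_q$). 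You flag this localization yourself as the main obstacle, and as proposed the step producing the $2\epsilon/\delta$ bound in the second display would not go through.

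The detour is also unnecessary, and the repair is a one-liner you essentially already wrote. The quantity $D(q):=\int|\nabla f|^2dm_q-\int f^2dm_q$ is nonnegative for $\Q$-a.e.\ $q$ by the needle Poincar\'e inequality, and by the disintegration property its $\Q$-average is at most $\int|\nabla f|^2d\mu-\int f^2d\mu=\lambda_1-1\le\epsilon$ (using that $f=0$ $\mu$-a.e.\ on $Z$). Hence Markov's inequality applied \emph{directly} to $D$ gives $D(q)\le 2\epsilon/\delta$ outside a set of $\Q$-measure at most $\delta/2$, with no $\Gamma_2$ and no localization; this is exactly the paper's argument. Combined with your (correct) second step, namely $\int\left||\nabla f|^2-1\right|d\mu\le 24\sqrt{\epsilon}$ from Lemma \ref{keylemma} with $p=1$ followed by Markov at level $48\sqrt{\epsilon}/\delta$, and the intersection of the two sets, you recover both displays, including the asymmetric lower bound $1-(48\sqrt{\epsilon}+2\epsilon)/\delta$. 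The same replacement fixes your first-display decomposition: the $L^1(\Q)$-bound you attribute to the ``disintegrated Bochner defect'' is simply $\|D\|_{L^1(\Q)}=\int D\,d\Q\le\lambda_1-1$, valid because $D$ has a sign.
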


\begin{proof}

On a.e. needle $q$, we have by the Poincar\'e inequality on the needle
$$\int{f^2dm_q} \leq \int{|f'|^2dm_q} \leq \int{|\nabla f|^2dm_q}.$$
Therefore, by Markov's inequality, we have
\begin{eqnarray*}
\Q\left(\left\{q; \int{|\nabla f|^2dm_q} \geq \int{f^2dm_q} + 2\epsilon/\delta\right\}\right)& \leq &\frac{\delta\big(\iint{|\nabla f|^2dm_qd\Q(q)} - \iint{f^2dm_qd\Q(q)\big)}}{2\epsilon} \\ &\leq & \delta/2.\end{eqnarray*}
So for an arbitrarily large proportion of needles, we almost have equality in Poincar\'e's inequality for $f$, up to a deficit of order $\epsilon$. 

Then from Lemma \ref{keylemma} with $p=1$, 
$$\int{\left|\int{(|\nabla f|^2 - 1)dm_q}\right| d\Q(q)} \leq \int{||\nabla f|^2 - 1|d\mu} \leq 24\sqrt{\epsilon}.$$ 
Hence 
$$\Q\left(\left\{q; \left|\int{|\nabla f|^2dm_q} - 1\right| \geq 48\sqrt{\epsilon}/\delta\right\}\right) \leq \delta/2.$$
Taking the intersection, we get a set of needles of mass at least $1 - \delta$ on which 
$$1-\frac{48\sqrt{\epsilon}}{\delta} \leq \int{|\nabla f|^2dm_q} \leq 1 + \frac{48\sqrt{\epsilon}}{\delta}$$
and
$$\int{f^2dm_q} \leq \int{|\nabla f|^2dm_q} \leq \int{f^2dm_q} + \frac{2\epsilon}{\delta}.$$
Reordering these bounds leads to the desired result. 
\end{proof}

Next, we prove the $L^2$-closeness of $f$ and $g$ for a large set of needles.
\begin{lem}\label{fg-clo}
For any $q \in Q_{\delta}$, there exists a constant $c_q$ such that
$$\int{(f - g - c_q)^2dm_q} \leq \frac{C\sqrt{\epsilon}}{\sqrt{\delta}}.$$
\end{lem}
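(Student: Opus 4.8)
The plan is to work on a single good needle $q \in Q_\delta$, where by Proposition \ref{prop_struct_needle} we may identify $(X_q, d, m_q)$ with $(\R, |\cdot|, e^{-\psi_q}dx)$ with $\psi_q'' \geq 1$ and $g(t) = t + c_q'$ on $X_q$. On such a needle $g$ restricted to $X_q$ is, up to an additive constant, just the coordinate function $t$, which is $1$-Lipschitz and in particular has $|g'| = 1$ a.e. on $X_q$. The key point is that $g$ is the \emph{global} maximizer of $u \mapsto \int \nabla u \cdot \nabla f \, d\mu$ among $1$-Lipschitz functions, and by the structure of the needle decomposition this maximization localizes: for a.e.\ needle, $g\restriction_{X_q}$ maximizes $\int_{X_q} u' f' \, dm_q$ among $1$-Lipschitz functions on $X_q$ (this is the standard localization property of guiding functions, since one can perturb $g$ needle-by-needle without affecting the $1$-Lipschitz constraint). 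First I would make this localization precise and deduce that on $X_q$ the function $f' $ has a sign consistent with $g' = \pm 1$, and more quantitatively that $\int_{X_q} |f'| \, dm_q = \int_{X_q} f' g' \, dm_q$.

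Next I would combine the two estimates from Lemma \ref{EstNeed1}: on $q \in Q_\delta$ we have $\int |\nabla f|^2 dm_q = \int (f')^2 dm_q$ (Poincaré saturates, so the weak upper gradient agrees with the one-dimensional derivative), and $\int (f')^2 dm_q \leq \int f^2 dm_q + 2\epsilon/\delta$, while $\int f^2 dm_q \leq 1 + 48\sqrt\epsilon/\delta$ and $\int (f')^2 dm_q \geq \int f^2 dm_q$. Now consider $h := f - g - c_q$ on $X_q$ with $c_q$ chosen so that $\int_X h \, dm_q = 0$ (using that $g\restriction_{X_q}$ is affine, $c_q$ absorbs the constant). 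Then $h' = f' - g' = f' \mp 1$, and
$$\int_{X_q} (h')^2 \, dm_q = \int_{X_q} (f')^2 \, dm_q - 2\int_{X_q} f' g' \, dm_q + 1 = \int_{X_q} (f')^2 \, dm_q - 2\int_{X_q} |f'| \, dm_q + 1.$$
By Cauchy–Schwarz (or the $L^1$ Poincaré bound giving $\int |f'| \geq $ something comparable to $\int f^2$) together with the near-equalities above, $\int (f')^2 dm_q$ is within $O(\sqrt\epsilon/\delta)$ of $1$ and $\int |f'| dm_q$ is within $O(\sqrt\epsilon/\delta)$ of $1$ — the latter because $\left(\int |f'|\right)^2 \leq \int (f')^2 \leq 1 + O(\sqrt\epsilon/\delta)$ bounds it above, and a matching lower bound follows from the $L^2$ lower bound on $\int (f')^2$ via $\int |f'| \geq \int (f')^2 / \|f'\|_{L^\infty(X_q)}$ — so I would instead control $\int |f'|$ from below directly: $\int (f')^2 - \int f^2 \leq 2\epsilon/\delta$ forces, via the spectral-gap/Poincaré slack, that $\int |f' - \mathrm{sign}(g')\,|f'| \,|$ is small; more simply, expand $\int (|f'| - 1)^2 dm_q = \int(f')^2 - 2\int|f'| + 1$ and bound $\int |f'|$ from below using Jensen is not enough, so here I would use that the needle is $\mathrm{RCD}(1,\infty)$ and $f\restriction_{X_q}$ is close to an eigenfunction, giving $\int |f'| \geq 1 - O(\sqrt\epsilon/\delta)$. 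Putting these together yields $\int_{X_q} (h')^2 dm_q \leq C\sqrt\epsilon/\sqrt\delta$ (after possibly re-optimizing $\delta$ inside the good set), and then the Poincaré inequality on the needle, $\int h^2 dm_q \leq \int (h')^2 dm_q$, gives the claim.

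The main obstacle I expect is making the localization of the guiding-function variational problem to individual needles rigorous — i.e.\ proving that $g\restriction_{X_q}$ genuinely solves the one-dimensional optimization $\sup_{u \; 1\text{-Lip}} \int_{X_q} u f \, dm_q$ for $\Q$-a.e.\ $q$, and hence that $\int_{X_q} |f'|\,dm_q = \int_{X_q} f' g'\, dm_q$ — since this requires knowing that the pieces $X_q$ are genuine "transport rays" for $f$ along which $g$ is monotone, and controlling the gluing of Lipschitz perturbations. The second delicate point is the lower bound $\int_{X_q} |f'| \, dm_q \geq 1 - O(\sqrt\epsilon/\delta)$: the clean way is to note $\left(\int_{X_q} |f'|\,dm_q\right)\left(\int_{X_q}|f'|\,dm_q\right)$ is squeezed between $\int f^2 dm_q$ (from below, since $f'\cdot g' = |f'|$ and integrating by parts $\int f \cdot g' \, dm_q= \int f^2\,dm_q$ up to the eigenfunction relation... ) — indeed $\int_{X_q} f' g' \, dm_q$ relates to $\int_{X_q} f \cdot (-g'') \dots$ which on an affine $g$ vanishes, so one must instead use $\lambda_1^{-1}\int \Gamma(f, g\circ f)$ as in the proof of Theorem \ref{thm_gaussian_component_eigen} localized to the needle. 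I would handle this by integrating the identity $\int_X f g' \, dm_q$-type relation against the needle measure and invoking the global near-equality $\int \nabla f \cdot \nabla g \, d\mu \approx \int f^2 d\mu \approx 1$ together with $\|\nabla g\|_\infty \leq 1$, which forces $\int |\nabla f| \cdot |\nabla g| \, dm_q$ close to $\int |\nabla f|^2 dm_q$ on most needles.
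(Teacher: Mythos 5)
There is a genuine gap at the heart of your argument. Your chain reduces the lemma to the lower bound $\int_{X_q}|f'|\,dm_q \geq 1 - O(\sqrt{\epsilon}/\sqrt{\delta})$ (equivalently, to the needle-wise alignment identity $\int_{X_q}|f'|\,dm_q = \int_{X_q} f'g'\,dm_q$ plus near-saturation), and none of the routes you sketch actually delivers it. Knowing $\int (f')^2\,dm_q \approx 1$ gives only an \emph{upper} bound on $\int |f'|\,dm_q$ via Cauchy--Schwarz, as you note. The needle-wise localization of the variational problem $\sup_u \int \nabla u\cdot\nabla f\,d\mu$ is not part of the needle decomposition's guarantees: the identity $\int \nabla u\cdot\nabla f\,d\mu = \lambda_1\int u f\,d\mu$ is a \emph{global} consequence of the eigenfunction equation, and the restriction of $f$ to a needle is not an eigenfunction of the one-dimensional operator on $(\R,e^{-\psi_q}dt)$, so neither the sign-consistency of $f'$ with $g'$ nor "$f\restriction_{X_q}$ is close to an eigenfunction" can be invoked — the latter is essentially the statement to be proven. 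Your fallback, the global near-equality $\int \nabla f\cdot\nabla g\,d\mu \approx \int f^2\,d\mu \approx 1$, is also not available at this stage: a lower bound on $\int fg\,d\mu$ close to $1$ would require a nearly $1$-Lipschitz competitor close to $f$, and in the paper the $H^1$- and $L^2$-closeness of $f$ and $g$ is \emph{deduced from} Lemma \ref{fg-clo} (in the theorem that follows it), so using it here is circular.

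The paper's proof avoids all of this. On a good needle one only needs: (i) $\int f\,dm_q = 0$ (property 4 of the decomposition), (ii) the small Poincar\'e deficit $\int |\nabla f|^2 dm_q \leq \int f^2 dm_q + 2\epsilon/\delta$ together with $\int f^2 dm_q \approx 1$ from Lemma \ref{EstNeed1}, and (iii) the one-dimensional stability theorem for the Bakry--\'Emery/Poincar\'e inequality of \cite{CF20}, applied to $(\R,|\cdot|,e^{-\psi_q}dt)$ with $\psi_q''\geq 1$. That theorem takes the deficit as input and outputs precisely the missing ingredient: with $\Theta = (\int f^2 dm_q)^{-1/2}$, one gets $\int(\Theta f - t)^2 e^{-\psi_q(t)}dt \leq \sqrt{18\epsilon}/\sqrt{\delta}$, i.e. $L^2(m_q)$-closeness of the normalized $f$ to the coordinate $t$, which equals $g - c_q$ on the needle by Proposition \ref{prop_struct_needle}; since $|\Theta - 1|^2 \leq C\epsilon/\delta^2$ by Lemma \ref{EstNeed1}, a triangle inequality concludes. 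If you want to rescue your approach, the cleanest fix is to cite that 1D deficit result (or reprove it in 1D) rather than attempt the gradient-alignment identity; as written, the key quantitative step is missing.
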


\begin{rmq}
Recall that for $\Q$-a.e. $q$, $\int f \, dm_q=0$. Since the average is an $L^2$ projection, we can therefore replace $c_q$ by $\int{gdm_q}$ in the above estimate.  Moreover, we can infer from the lemma above that
\begin{equation}\label{cq-mean}
\left| \int g \, dm_q -c_q\right| \leq  \frac{C\epsilon^{1/4}}{\delta^{1/4}}.
\end{equation}
\end{rmq}

\begin{proof}
Let $\Theta = \left(\int{f^2dm_q}\right)^{-1/2}$. Recall that we can identify each needle $X_q$ with the space $( \R, |\cdot|, e^{-\psi_q}dt)$, by Proposition \ref{prop_struct_needle}.  Using the results of \cite{CF20} on stability of the Bakry-Emery theorem on an Euclidean space, and since $\int{fdm_q} = 0$, we have
$$\int_{\R}{(\Theta f- t)^2e^{-\psi_q(t)}dt} \leq \frac{\sqrt{18\epsilon}}{\sqrt{\delta}}$$
and therefore, from the parametrization of $g$ on the needle,
$$\int_{X_q}{(\Theta f- g - c_q)^2e^{-\psi_q(t)}dt} \leq \frac{\sqrt{18\epsilon}}{\sqrt{\delta}}.$$
From Lemma \ref{EstNeed1}, and provided $\sqrt{\epsilon}/\delta\leq C_0 $, we have $|\Theta ^2 - 1| \leq C\sqrt{\epsilon}/\delta$; therefore
$$|\Theta - 1|^2 \leq C\epsilon/\delta^2.$$
Then 
\begin{eqnarray*}
\int_{\R}{( f- g - c_q)^2e^{-\psi_q(t)}dt} &\leq & 2(\Theta -1)^2\int_{\R}{f^2} e^{-\psi_q(t)}dt  \\ &{} &\;\hspace{2,3cm} +     2\int_{\R}{(\Theta f- g - c_q)^2e^{-\psi_q(t)}dt}, \\
&\leq &  C\sqrt{\epsilon}/\sqrt{\delta}.
\end{eqnarray*}
\end{proof}

In the last technical lemma of this section, we show that $g$ is almost centered on a large set of needles. 
\begin{lem}\label{teclem2}
There exists a set of needles $\bar{Q}_{\alpha}$ with $\Q(\bar{Q}_{\alpha}) \geq 1 - \alpha$ such that for any $q \in \bar{Q}_{\alpha}$ we have 
$$\left|\int{g\,dm_q}\right|^2 \leq C\epsilon^{1/5}/\alpha$$ 
\end{lem}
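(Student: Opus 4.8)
\textbf{Proof plan for Lemma \ref{teclem2}.}

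The plan is to bound the global quantity $\int \left(\int g\, dm_q\right)^2 d\Q(q)$ and then apply Markov's inequality. The starting point is that $f$ is centered on $\Q$-a.e.\ needle (property (4) of the needle decomposition), so $\int f\, dm_q = 0$, while globally $\int g f\, d\mu = \sup_{u\,1\text{-Lip}} \int u f\, d\mu$. First I would record, using the guiding-function characterization \eqref{gf} and the fact that $g$ is $1$-Lipschitz with $\Lip(g)=1$ a.e., a lower bound on $\int g f\, d\mu$: testing the variational problem against $f/\|{|\nabla f|}\|_\infty$-type competitors is not available since $f$ need not be Lipschitz, so instead I would use that $g$ also maximizes $\int \nabla u\cdot\nabla f\, d\mu$ among $1$-Lipschitz $u$, whence $\int \nabla g\cdot\nabla f\, d\mu = \int |\nabla f|^2 d\mu / \lambda_1 \cdot(\dots)$ — more precisely, comparing $g$ with small perturbations and using $\Delta f = -\lambda_1 f$ gives $\int \langle\nabla g,\nabla f\rangle d\mu = \lambda_1 \int g f\, d\mu$, and comparing with the competitor built from $f$ itself (via Lemma \ref{keylemma}, $|\nabla f|\approx 1$) shows $\int g f\, d\mu \geq 1 - C\sqrt{\epsilon}$.

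Next I would disintegrate: $\int g f\, d\mu = \int \left(\int g f\, dm_q\right) d\Q(q)$. On each needle $q \in Q_\delta$, by Cauchy--Schwarz and Lemma \ref{EstNeed1}, $\int g f\, dm_q \leq \left(\int g^2 dm_q\right)^{1/2}\left(\int f^2 dm_q\right)^{1/2} \leq \left(\int g^2 dm_q\right)^{1/2}(1 + C\sqrt\epsilon/\delta)^{1/2}$, but more usefully, since $\int f\, dm_q = 0$, we have $\int g f\, dm_q = \int (g - \int g\, dm_q) f\, dm_q$, so the value of $\int g f\, dm_q$ only sees the centered part of $g$ on the needle. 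Combining the lower bound $\int g f\, d\mu \geq 1 - C\sqrt\epsilon$ with the needle-wise upper bound $\int g f\, dm_q \leq \sqrt{\int g^2 dm_q}\cdot\sqrt{\int f^2 dm_q}$ and the decomposition $\int g^2 dm_q = \left(\int g\, dm_q\right)^2 + \Var_{m_q}(g)$, together with Lemma \ref{fg-clo} (which controls $\int (f - g - c_q)^2 dm_q$ and hence, via the remark, $\Var_{m_q}(g) \leq \int f^2 dm_q + C\sqrt\epsilon/\sqrt\delta \leq 1 + C\sqrt\epsilon/\delta$ and also $|c_q - \int g\,dm_q|\leq C\epsilon^{1/4}/\delta^{1/4}$), I would extract that $\int \left(\int g\, dm_q\right)^2 d\Q(q)$ is small: schematically, $1 - C\sqrt\epsilon \leq \int g f\, d\mu \leq \int \sqrt{(\int g\, dm_q)^2 + \Var_{m_q} g}\,\sqrt{\int f^2 dm_q}\, d\Q(q)$, and since $\Var_{m_q} g \leq 1 + C\sqrt\epsilon/\delta$ with $\int f^2 dm_q \geq 1 - C\sqrt\epsilon/\delta$, concavity of $\sqrt{\cdot}$ and Jensen force $\int(\int g\, dm_q)^2 d\Q(q) \leq C\epsilon^{1/5}$ after optimizing the free parameter $\delta$ against the error terms. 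Finally, Markov's inequality yields the set $\bar Q_\alpha$ with $\Q(\bar Q_\alpha)\geq 1-\alpha$ on which $\left|\int g\, dm_q\right|^2 \leq C\epsilon^{1/5}/\alpha$.

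The main obstacle I anticipate is obtaining the sharp lower bound $\int g f\, d\mu \geq 1 - C\sqrt\epsilon$ with the right power of $\epsilon$: since eigenfunctions need not be globally Lipschitz one cannot simply plug $f$ into the variational problem \eqref{gf}, so one must argue through the gradient formulation $\int\langle\nabla g,\nabla f\rangle d\mu = \lambda_1\int g f\, d\mu$ and an Euler--Lagrange/first-variation comparison, controlling the defect via $\||\nabla f|^2 - \lambda_1\|_1 \leq C\sqrt\epsilon$ from Lemma \ref{keylemma}. The second delicate point is bookkeeping the exponents: Lemma \ref{fg-clo} loses a square root (giving $\epsilon^{1/4}$ on $|c_q - \int g\, dm_q|$), so to land on the stated $\epsilon^{1/5}$ one must choose $\delta$ as a suitable power of $\epsilon$ (roughly $\delta \sim \epsilon^{1/5}$) balancing the $\sqrt\epsilon/\delta$ terms from Lemma \ref{EstNeed1} and the $\sqrt\epsilon/\sqrt\delta$ term from Lemma \ref{fg-clo} against the Markov loss; I would carry the constants symbolically and optimize at the end rather than committing to $\delta$ upfront.
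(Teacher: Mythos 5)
There is a genuine gap, in two places. First, the cornerstone of your plan, the lower bound $\int gf\,d\mu\ge 1-C\sqrt\epsilon$, is not established: the identity $\int\langle\nabla g,\nabla f\rangle\,d\mu=\lambda_1\int gf\,d\mu$ merely reformulates it, and the ``competitor built from $f$'' does not exist as stated, since $f$ is not globally Lipschitz and Lemma \ref{keylemma} only controls $|\nabla f|^2-\lambda_1$ in $L^p$; turning that into an admissible $1$-Lipschitz competitor within $O(\sqrt\epsilon)$ of $f$ in $L^2$ would need a quantitative Lipschitz-truncation argument that is not sketched and is delicate in this non-doubling setting. Note also that in the paper the $L^2$-closeness of $f$ and $g$ --- which is essentially what your bound amounts to --- is only proved \emph{after} this lemma, using it, so your route is dangerously close to circular.

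Second, even granting that bound, the schematic chain $1-C\sqrt\epsilon\le\int gf\,d\mu\le\int\sqrt{(\int g\,dm_q)^2+\Var_{m_q}(g)}\,\sqrt{\int f^2dm_q}\,d\Q$ cannot force $\int(\int g\,dm_q)^2d\Q$ to be small: the right-hand side is increasing in the very quantity you want to bound, and, as you yourself observe, $\int f\,dm_q=0$ makes $\int gf\,dm_q$ completely blind to the needle means (adding an arbitrary constant to $g$ on each needle changes none of your inequalities). Control of the means must come from a global budget, and this is exactly what the paper's (much shorter) proof uses: $g$ is centered and $1$-Lipschitz, so $\Var_\mu(g)\le 1$ by the Poincar\'e inequality, and the law of total variance gives $1\ge\int\Var_{m_q}(g)\,d\Q+\int\bigl(\int g\,dm_q\bigr)^2 d\Q$. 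One then needs a needle-wise \emph{lower} bound $\Var_{m_q}(g)\ge 1-C\epsilon^{1/4}/\delta^{1/4}$ on $Q_\delta$, obtained from $\Var_{m_q}(g)^{1/2}\ge\Var_{m_q}(f)^{1/2}-\Var_{m_q}(f-g)^{1/2}$ together with Lemmas \ref{EstNeed1}, \ref{fg-clo} and \eqref{cq-mean} --- whereas you only extract \emph{upper} bounds on $\Var_{m_q}(g)$ from those lemmas, which say nothing about the means. With the lower bound in hand one gets $\int\bigl(\int g\,dm_q\bigr)^2d\Q\le\delta+C\epsilon^{1/4}/\delta^{1/4}$, and the choice $\delta\sim\epsilon^{1/5}$ followed by Markov's inequality finishes; the detour through $\int gf\,d\mu$ is both unproven and unnecessary.
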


\begin{proof}
Recall that we assumed $g$ is centered with respect to $\mu$. Therefore, since $g$ is $1$-Lipschitz, the $L^2$ version of the Poincaré inequality yields 
$$1 \geq \Var_{\mu}(g) = \int{\Var_{m_q}(g)d\Q(q)} + \int{\left(\int{gdm_q}\right)^2d\Q(q)}.$$
If $q \in Q_{\delta}$ (which is still the set provided by Lemma \ref{EstNeed1}) then 
$$\Var_{m_q}(g) \geq \left(\Var_{m_q}(f)^{1/2} - \Var_{m_q}(f-g)^{1/2}\right)^2 \geq 1 - C\epsilon^{1/4}/\delta^{1/4},$$
where we use Lemma \ref{fg-clo} and \eqref{cq-mean} to get the inequality.
Consequently, we obtain
$$\int{\Var_{m_q}(g)d\Q(q)} \geq \Q({Q}_{\delta})(1 - C\epsilon^{1/4}/\delta^{1/4}) \geq (1-\delta)(1 - C\epsilon^{1/4}/\delta^{1/4}),$$
so that
$$\int{\left(\int{gdm_q}\right)^2d\Q(q)} \leq 1 - (1-\delta)(1 - C\epsilon^{1/4}/\delta^{1/4}) \leq \delta + C\epsilon^{1/4}/\delta^{1/4}.$$
Taking $\delta$ of order $\epsilon^{1/5}$ and using Markov's inequality yields the result. 
\end{proof}

%%%%%%%%%%%%%

We finally prove the $H^1$-closeness of $f$ and $g$ provided that the spectral gap is close to $1$.

\begin{thm} Let $(M,d,\mu)$ be an RCD$(1,\infty)$ space admitting CD$(1,\infty)$ disintegration. Assume $\lambda_1 \leq 1 + \epsilon$ with $\epsilon < 1$. For any $\theta$ small enough, there exists $C(\theta) > 0$ such that
$$\int{(f-g)^2d\mu} \leq C(\theta)\epsilon^{1/10 - \theta}$$
and
$$\int{|\nabla f - \nabla g|^2d\mu} \leq C(\theta)\epsilon^{1/20 - \theta}.$$
\end{thm}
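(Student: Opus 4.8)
The plan is to integrate the needle-wise estimates from Lemmas \ref{EstNeed1}, \ref{fg-clo} and \ref{teclem2} against $\Q$ and then patch together the $H^1$ bound from the $L^2$ bound using the gradient estimates that are already available.

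First I would establish the $L^2$-closeness $\int(f-g)^2 d\mu \leq C(\theta)\epsilon^{1/10-\theta}$. Fix a parameter $\delta$ to be optimized (it will end up of order $\epsilon^{1/5}$, matching Lemma \ref{teclem2}). On the good set of needles $Q_\delta \cap \bar Q_\alpha$, Lemma \ref{fg-clo} gives $\int(f-g-c_q)^2 dm_q \leq C\sqrt{\epsilon}/\sqrt{\delta}$, and the Remark after it together with Lemma \ref{teclem2} gives $|c_q| \leq |\int g\,dm_q| + C\epsilon^{1/4}/\delta^{1/4} \leq C\epsilon^{1/10}/\sqrt{\alpha} + C\epsilon^{1/4}/\delta^{1/4}$, so on this set $\int(f-g)^2 dm_q$ is controlled by a power of $\epsilon$ divided by a power of $\delta$ (and $\alpha$). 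On the complement (mass $\leq \delta + \alpha$) I would bound $\int(f-g)^2 dm_q$ crudely: $\int f^2 dm_q$ integrates to $1$, and $\int g^2 dm_q$ is controlled using that $g$ is $1$-Lipschitz and centered so $\Var_\mu(g)\leq 1$, giving $\int(\int g^2 dm_q)d\Q \leq 1 + \int(\int g\,dm_q)^2 d\Q \leq 2$; hence the bad-needle contribution is $O(\delta + \alpha)$ after a Cauchy–Schwarz/Markov-type argument (one has to be slightly careful since $\int g^2 dm_q$ need not be uniformly bounded, only integrable, but Cauchy–Schwarz on the bad set suffices). Choosing $\delta \sim \alpha \sim \epsilon^{1/5}$ and absorbing the small loss into $\theta$ yields $\int(f-g)^2 d\mu \leq C(\theta)\epsilon^{1/10-\theta}$.

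For the gradient bound, I would use that $g$ is the maximizer of $u\mapsto \int \nabla u\cdot\nabla f\,d\mu$ over $1$-Lipschitz functions, so $\int \nabla g\cdot\nabla f\,d\mu \geq \int \nabla f\cdot\nabla f\,d\mu$ is false in general, but the variational characterization does give $\int \nabla(f-g)\cdot\nabla f\,d\mu \le 0$ — more precisely, one expands
$$\int |\nabla(f-g)|^2 d\mu = \int|\nabla f|^2 d\mu - 2\int \nabla f\cdot\nabla g\,d\mu + \int|\nabla g|^2 d\mu,$$
and uses $|\nabla g|\leq 1$ a.e. together with $\int|\nabla f|^2 d\mu = \lambda_1 \leq 1+\epsilon$ and the lower bound $\int \nabla f\cdot\nabla g\,d\mu \geq \int \nabla f\cdot\nabla h\,d\mu$ for competitors $h$. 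The cleaner route: since $f-g$ itself need not be Lipschitz, instead combine the $L^2$ bound on $f-g$ with an interpolation. Write $\int|\nabla(f-g)|^2 d\mu = -\int (f-g)\Delta(f-g)d\mu$ — but $g$ is not in the domain of $\Delta$. So instead I would argue: $\int \langle\nabla f,\nabla g\rangle d\mu = -\int g\,\Delta f\,d\mu = \lambda_1 \int fg\,d\mu$, hence
$$\int|\nabla(f-g)|^2 d\mu = \lambda_1 - 2\lambda_1\int fg\,d\mu + \int|\nabla g|^2 d\mu \leq \lambda_1 - 2\lambda_1\int fg\,d\mu + 1.$$
Now $2\int fg\,d\mu = \int f^2 d\mu + \int g^2 d\mu - \int(f-g)^2 d\mu \geq 1 + \int g^2 d\mu - C(\theta)\epsilon^{1/10-\theta}$, and $\int g^2 d\mu = \Var_\mu(g) \geq \Var_\mu(f) - \ldots$ is close to $1$ by a global version of the argument in Lemma \ref{teclem2} (using Lemma \ref{fg-clo} integrated), say $\int g^2 d\mu \geq 1 - C(\theta)\epsilon^{1/20-\theta}$. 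Substituting, $2\lambda_1\int fg\,d\mu \geq \lambda_1(2 - C(\theta)\epsilon^{1/20-\theta})$, so $\int|\nabla(f-g)|^2 d\mu \leq \lambda_1 + 1 - 2\lambda_1 + \lambda_1 C(\theta)\epsilon^{1/20-\theta} = 1-\lambda_1 + \lambda_1 C(\theta)\epsilon^{1/20-\theta} \leq C(\theta)\epsilon^{1/20-\theta}$, since $1 - \lambda_1 \leq 0$.

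\textbf{Main obstacle.} The delicate points are (a) controlling the bad-needle contributions to $\int(f-g)^2 d\mu$ without a uniform $L^\infty$ or $L^2$ bound on $g$ along individual needles — one only has integrability of $q\mapsto\int g^2 dm_q$, so the estimate on the bad set must go through Cauchy–Schwarz with a carefully chosen Hölder pairing and the fact that $g\in L^2(\mu)$ globally; and (b) getting the global $L^2$ lower bound $\int g^2 d\mu \geq 1 - C(\theta)\epsilon^{1/20-\theta}$ with the right exponent, which requires integrating the needle-wise variance comparison $\Var_{m_q}(g)\geq (\Var_{m_q}(f)^{1/2} - \Var_{m_q}(f-g)^{1/2})^2$ over $Q_\delta$ and handling the square-root losses, so the final exponent $1/20$ is forced by the $\epsilon^{1/4}/\delta^{1/4}$ terms with $\delta\sim\epsilon^{1/5}$. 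Tracking these exponents carefully, and absorbing all the polynomially-small-in-$\delta$ prefactors into the arbitrarily small $\theta$, is the bulk of the work.
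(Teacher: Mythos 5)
Your plan follows the paper's proof: split $\int(f-g)^2d\mu$ over the good needles $Q_\delta\cap\bar Q_\alpha$ (using Lemmas \ref{fg-clo} and \ref{teclem2}) and the bad needles, then deduce the gradient bound from $\int\langle\nabla f,\nabla g\rangle d\mu=\lambda_1\int fg\,d\mu$, $|\nabla g|\le 1$ and $\lambda_1\ge 1$. However, two quantitative steps, as you state them, do not deliver the claimed exponent. First, the parameter choice $\delta\sim\alpha\sim\epsilon^{1/5}$ is not the right one: the good-set contribution coming from Lemma \ref{teclem2} is $C\epsilon^{1/5}/\alpha$, which with $\alpha\sim\epsilon^{1/5}$ is $O(1)$, so the estimate gives no decay at all. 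The exponent $1/5$ in Lemma \ref{teclem2} already incorporates the internal optimization in that lemma; in the theorem one must take $\alpha\sim\epsilon^{1/10}$ (and, say, $\delta\sim\epsilon^{3/10}$, whose contribution $\sqrt{\epsilon/\delta}$ is then lower order). Second, your bad-set bound is not justified as written: knowing only that $g\in L^2(\mu)$, i.e. that $q\mapsto\int(f-g)^2dm_q$ is integrable in $\Q$, gives no quantitative control of its integral over a set of $\Q$-measure $\alpha+\delta$, since the mass could concentrate there. Cauchy--Schwarz in $q$ requires a bound on $\int(f-g)^4d\mu$ and only yields a factor $(\alpha+\delta)^{1/2}$, which with $\alpha\sim\epsilon^{1/10}$ would degrade the final exponent to $1/20$ instead of $1/10-\theta$. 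To recover $(\alpha+\delta)^{1-1/q}$ with $q$ large one needs $L^{2q}$ bounds on $f$ and $g$ for arbitrarily large $q$; this is exactly what Proposition \ref{integ_eigen} (for the eigenfunction) and Proposition \ref{prop_p_poincare} (for the $1$-Lipschitz, centered $g$) provide, leading to the paper's term $Cq^2(\alpha+\delta)^{1-1/q}$ and then to $\epsilon^{1/10-\theta}$ by taking $q$ large. Your ``carefully chosen H\"older pairing'' has to be made explicit in this way; as stated it is a gap.

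For the gradient estimate your computation is correct and essentially the paper's, but the auxiliary lower bound $\int g^2d\mu\ge 1-C(\theta)\epsilon^{1/20-\theta}$ does not require any ``global version of Lemma \ref{teclem2}'': since $\|f\|_2=1$, the triangle inequality $\|g\|_2\ge\|f\|_2-\|f-g\|_2$ combined with the first estimate already gives it. Equivalently (and this is what the paper does), one can skip $\int g^2$ altogether and bound $\int fg\,d\mu=\int f^2d\mu+\int f(g-f)d\mu\ge 1-\|f-g\|_2$ by Cauchy--Schwarz, then conclude from $2+\epsilon-2\lambda_1\le\epsilon$. With these two repairs (the choice $\alpha\sim\epsilon^{1/10}$ and the high-moment H\"older argument on the bad needles) your argument coincides with the paper's proof.
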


\begin{proof}
Let us decompose $\|f-g\|_{L^2}^2$ as follows:
$$\int{(f-g)^2d\mu} = \int_{Q_{\delta}\cap\bar{Q}_{\alpha}}{\int{(f-g)^2dm_q} d\Q(q)} + \int_{(Q_{\delta}\cap\bar{Q}_{\alpha})^c}{\int{(f-g)^2dm_q} d\Q(q)}.$$
To estimate the first term on the right hand side, we build on Lemmas \ref{fg-clo} and \ref{teclem2}:
\begin{align*}
&\int_{Q_{\delta}\cap\bar{Q}_{\alpha}}{\int{(f-g)^2dm_q} d\Q(q)} \\
&= \int_{Q_{\delta}\cap\bar{Q}_{\alpha}}{Var_{m_q}(f-g) d\Q(q)} + \int_{Q_{\delta}\cap\bar{Q}_{\alpha}}{\left(\int{f-g dm_q}\right)^2 d\Q(q)}\\
&\leq \int_{Q_{\delta}\cap\bar{Q}_{\alpha}}{\int{(f-g - c_q)^2dm_q} d\Q(q)} + \int_{Q_{\delta}\cap\bar{Q}_{\alpha}}{\left(\int{g dm_q}\right)^2 d\Q(q)} \\
&\leq C\sqrt{\epsilon}/\sqrt{\delta} + C\epsilon^{1/5}/\alpha.
\end{align*}
In order to estimate the second term, we set $p$ and $q$ two positive numbers with $p^{-1} + q^{-1} = 1$.
\begin{align*}
&\int_{(Q_{\delta}\cap\bar{Q}_{\alpha})^c}{\int{(f-g)^2dm_q} d\Q(q)} \leq \Q((Q_{\delta}\cap\bar{Q}_{\alpha})^c)^{1/p}\left(\int{(f-g)^{2q}d\mu}\right)^{1/q} \\
&\leq 2(\alpha + \delta)^{1/p}\left(\left(\int{f^{2q}d\mu}\right)^{1/q} + \left(\int{g^{2q}d\mu}\right)^{1/q}\right)  \\
&\leq 2(\alpha + \delta)^{1/p}((2q-1)^{1 + \epsilon} + 16q^2) \\
&\leq Cq^2(\alpha + \delta)^{1/p}.
\end{align*}
Here we used Proposition \ref{integ_eigen} to control the $L^{2q}$ norm of $f$ and Proposition \ref{prop_p_poincare} to control the $L^{2q}$ norm of $g$. 

Combining the two estimates yields
$$\int{(f-g)^2d\mu} \leq C(q^2(\alpha + \delta)^{(q-1)/q} + \sqrt{\epsilon}/\sqrt{\delta} + C\epsilon^{1/5}/\alpha).$$
We then take $\alpha$ of order $\epsilon^{1/10}$, $q$ large enough and $\delta$ of order say $\epsilon^{3/10}$ (the influence of $\delta$ is on lower order terms anyway). 

For the second estimate, we expand the norm of the gradients as follows:
\begin{align*}
\int{|\nabla f - \nabla g|^2d\mu} &= \int{|\nabla f|^2 + |\nabla g|^2 - 2\nabla f \cdot \nabla g d\mu} \\
&\leq 2 + \epsilon + 2 \int{(Lf)gd\mu} \\
&= 2 + \epsilon -2  \lambda_1 \int{fgd\mu}  \\
&= 2 + \epsilon -2\lambda_1 \int{f^2d\mu} - 2\lambda_1 \int{f(g-f)d\mu} \\
&\leq \epsilon + 2\lambda_1||f||_{L^2}||f-g||_{L^2} \\
&\leq C(\theta)\epsilon^{1/20 - \theta}. 
\end{align*}
\end{proof}

%\begin{cor}
%If $\lambda_1 \leq 1 + \epsilon$ with $\epsilon \leq 1$ then $D_{L^2} \geq 1 - C(\theta)\epsilon^{1/20-\theta}$.
%\end{cor}

Proceeding as in the proof of Theorem \ref{thm_gaussian_component_eigen}, we also obtain

\begin{cor}\label{cor_dist_proj_lip}Let $(X,d,\mu)$ be an RCD$(1,\infty)$ space admitting $CD(1,\infty)$ disintegration, and whose spectral gap satisfies $\lambda_1 \leq 1 + \epsilon$. Let $g$ be the guiding function as in \eqref{gf}. Then,
$$W_1(g_{\sharp}\mu, \gamma) \leq C(\theta)\epsilon^{1/20-\theta}$$
$$d_{TV}(g_{\sharp}\mu, \gamma) \leq 2C(\theta)\epsilon^{1/20-\theta}$$
\end{cor}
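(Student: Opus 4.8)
The plan is to derive both estimates directly from the $H^1$-closeness of $f$ and $g$ established in the preceding theorem, combined with Stein's lemma, exactly mirroring the proof of Theorem \ref{thm_gaussian_component_eigen}. The starting point is Stein's lemma: for any $1$-Lipschitz test function $h$, we control $W_1(g_\sharp\mu,\gamma)$ (and $d_{TV}$) by $4\sup_h \int (h' - xh)\,dg_\sharp\mu$, where the supremum runs over suitable bounded-derivative test functions. Writing $\nu_g = g_\sharp\mu$, the quantity $\int x h(x)\,d\nu_g = \int g(x)h(g(x))\,d\mu$ needs to be handled. Unlike the eigenfunction case, $g$ is not itself an eigenfunction, so we cannot write $g = -\lambda_1^{-1}Lg$ exactly; instead the idea is to substitute $f$ for $g$ wherever possible and absorb the error using $\|f-g\|_{L^2} \leq C(\theta)\epsilon^{1/10-\theta}$ and $\|\nabla f - \nabla g\|_{L^2} \leq C(\theta)\epsilon^{1/20-\theta}$.

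Concretely, I would compare $\int g\, h(g)\,d\mu$ with $\int f\, h(f)\,d\mu$: the difference is bounded, using that $h$ and $h'$ are bounded and $\|f-g\|_{L^2}$ is small, by a term of order $\epsilon^{1/10-\theta}$ (here $|g h(g) - f h(f)| \lesssim |g-f|$ pointwise since $t\mapsto t h(t)$ is Lipschitz on the relevant range up to a bounded correction, or one splits $g h(g) - f h(f) = (g-f)h(g) + f(h(g)-h(f))$ and uses $\|f-g\|_2$, $\|f\|_2=1$, and Lipschitzness of $h$). Then for the eigenfunction piece one uses $\int f\,h(f)\,d\mu = \lambda_1^{-1}\int \Gamma(f, h\circ f)\,d\mu = \lambda_1^{-1}\int h'(f)\Gamma(f)\,d\mu$ exactly as in the proof of Theorem \ref{thm_gaussian_component_eigen}. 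Meanwhile $\int h'(g)\,d\nu_g = \int h'(g)\,d\mu$ should be compared with $\lambda_1^{-1}\int h'(f)\Gamma(f)\,d\mu$: one replaces $h'(g)$ by $h'(f)$ (cost $O(\|f-g\|_2)$ since $h'$ is bounded and we need $h''$ bounded, which is why controlling $W_1$ allows the extra regularity assumption noted after the Stein lemma) and then uses $|\Gamma(f) - \lambda_1| $ small in $L^1$ from Lemma \ref{keylemma} to replace $\lambda_1^{-1}\Gamma(f)$ by $1$. Collecting the errors, the dominant term is of order $\epsilon^{1/20-\theta}$ coming through the $\nabla f - \nabla g$ comparison (or equivalently through the weakest of the two $H^1$ bounds), giving the stated $W_1$ bound; the $d_{TV}$ bound follows from the same Stein estimate with the factor $2$ accounting for the version of the lemma used.

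The main obstacle I anticipate is bookkeeping the regularity of the test functions so that the substitution $h'(g) \rightsquigarrow h'(f)$ is legitimate with an error controlled by $\|f-g\|_{L^2}$ rather than something worse: this requires a uniform bound on $h''$, hence the implicit use of the sharper Stein lemma variant (valid for $W_1$, and for $d_{TV}$ with the extra constant) mentioned in the remarks after the Stein lemma statement. A secondary point is ensuring the pointwise/integral manipulation $\int f h(f)\,d\mu = \lambda_1^{-1}\int h'(f)\Gamma(f)\,d\mu$ is justified by the diffusion property and the chain rule for $\Gamma$ on RCD spaces — but this is already used verbatim in the proof of Theorem \ref{thm_gaussian_component_eigen}, so it transfers without new difficulty. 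No delicate new estimate is needed; the corollary is essentially a transfer of the eigenfunction result along the $L^2$/$H^1$ approximation $f \approx g$, and the exponent $1/20-\theta$ is inherited directly from the weaker of the two bounds in the theorem it follows.
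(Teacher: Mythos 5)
Your overall strategy -- running Stein's method for $g_{\sharp}\mu$ and transferring the eigenfunction computation of Theorem \ref{thm_gaussian_component_eigen} to $g$ via the $H^1$-closeness theorem -- is the intended one, and your argument does yield the $W_1$ bound (in fact with the better exponent $\epsilon^{1/10-\theta}$: as written you never use the gradient estimate at all, so your closing claim that the dominant error ``comes through the $\nabla f-\nabla g$ comparison'' does not match your own bookkeeping). The genuine gap is the total variation bound. Your replacement of $h'(g)$ by $h'(f)$ costs $\|h''\|_{\infty}\|f-g\|_{L^1}$, and a uniform bound on $h''$ is available only for the $W_1$ test class: the remark after the Stein lemma explicitly restricts the extra second-derivative assumption to the case where one only wants $W_1$. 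For $d_{TV}$ the admissible $h$ satisfy just $\|h\|_{\infty},\|h'\|_{\infty}\leq 4$, with no modulus of continuity on $h'$, so $\int (h'(g)-h'(f))\,d\mu$ cannot be controlled by $\|f-g\|_{L^2}$; nor can you deduce $d_{TV}$ from the $W_1$ estimate afterwards, since total variation is not dominated by $W_1$ and $d_{TV}(f_{\sharp}\mu,g_{\sharp}\mu)$ need not be small even when $\|f-g\|_{L^2}$ is. Asserting that the TV bound ``follows from the same Stein estimate with the factor $2$'' is therefore not a proof. (A minor slip: $t\mapsto t h(t)$ is not Lipschitz on the whole line under these bounds, but your alternative splitting $(g-f)h(g)+f(h(g)-h(f))$ is fine.)

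The repair -- and what ``proceeding as in the proof of Theorem \ref{thm_gaussian_component_eigen}'' amounts to -- is to never change the argument of $h$ or $h'$. Write $\int g\,h(g)\,d\mu=\int (g-f)h(g)\,d\mu+\int f\,h(g)\,d\mu$; the first term is at most $4\|f-g\|_{L^2}$, and for the second the diffusion property applied to $h\circ g$ gives $\int f\,h(g)\,d\mu=\lambda_1^{-1}\int h'(g)\,\Gamma(f,g)\,d\mu$, so that
$$\int \big(h'(g)-g\,h(g)\big)\,d\mu\;\leq\;4\|f-g\|_{L^2}+4\lambda_1^{-1}\int\big|\lambda_1-\Gamma(f,g)\big|\,d\mu,$$
which requires only $h,h'$ bounded and hence covers $W_1$ and $d_{TV}$ simultaneously. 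Then, by polarization, $\Gamma(f,g)=\tfrac12\big(\Gamma(f)+\Gamma(g)-\Gamma(f-g)\big)$, and one bounds the three pieces separately: $\int|\lambda_1-\Gamma(f)|\,d\mu\leq C\sqrt{\epsilon}$ by Lemma \ref{keylemma}; $\int(\lambda_1-\Gamma(g))\,d\mu\leq 2\lambda_1\|f-g\|_{L^2}\leq C\epsilon^{1/10-\theta}$, using $\Gamma(g)\leq 1$ and $\int\Gamma(f,g)\,d\mu=\lambda_1\int fg\,d\mu\geq\lambda_1(1-\|f-g\|_{L^2})$; and $\int\Gamma(f-g)\,d\mu\leq C(\theta)\epsilon^{1/20-\theta}$ by the theorem. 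This last term is where the exponent $1/20-\theta$ actually enters; note that estimating $\int|\Gamma(f,f-g)|\,d\mu$ by Cauchy--Schwarz instead would only give order $\epsilon^{1/40}$, so some such device is needed to reach the stated rate.
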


We shall use the estimate on the total variation distance to get an estimate on the observable diameter via the following lemma: 

\begin{lem}\label{lem_obs_tv}
Assume that $\mu$ and $\nu$ are two probability measures on a metric space $(X,d)$ such that $d_{TV}(\mu, \nu) \leq \epsilon$, where $\epsilon>0$. Then for any $\kappa > \epsilon$, the following inequality holds
$$D_{obs}(\mu; \kappa - \epsilon) \geq D_{obs}( \nu; \kappa).$$
\end{lem}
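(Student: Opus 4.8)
The statement to prove is Lemma~\ref{lem_obs_tv}: if $d_{TV}(\mu,\nu)\le\epsilon$ then $D_{obs}(\mu;\kappa-\epsilon)\ge D_{obs}(\nu;\kappa)$ for $\kappa>\epsilon$. The plan is to unwind the definition of the observable diameter and show that any $1$-Lipschitz test function $f$ that nearly realizes $D_{obs}(\nu;\kappa)$ also serves as a competitor for $D_{obs}(\mu;\kappa-\epsilon)$, using only the fact that the total variation distance is preserved under pushforward by measurable maps. The key elementary observation is that $d_{TV}(f_\sharp\mu, f_\sharp\nu)\le d_{TV}(\mu,\nu)\le\epsilon$, because for any Borel set $E\subset\R$ one has $|f_\sharp\mu(E)-f_\sharp\nu(E)| = |\mu(f^{-1}(E))-\nu(f^{-1}(E))|\le d_{TV}(\mu,\nu)$.

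First I would fix $\delta>0$ and pick a $1$-Lipschitz $f:X\to\R$ such that $\inf\{\operatorname{diam}(E): f_\sharp\nu(E)\ge 1-\kappa\} \ge D_{obs}(\nu;\kappa)-\delta$. Then for \emph{any} Borel set $E$ with $f_\sharp\mu(E)\ge 1-(\kappa-\epsilon)$, the TV bound gives $f_\sharp\nu(E)\ge f_\sharp\mu(E)-\epsilon\ge 1-\kappa$, so $E$ is admissible in the infimum defining the $\nu$-side quantity; hence $\operatorname{diam}(E)\ge D_{obs}(\nu;\kappa)-\delta$. Taking the infimum over all such $E$ yields $\inf\{\operatorname{diam}(E): f_\sharp\mu(E)\ge 1-(\kappa-\epsilon)\}\ge D_{obs}(\nu;\kappa)-\delta$, and since $f$ is one admissible competitor in the supremum defining $D_{obs}(\mu;\kappa-\epsilon)$, we get $D_{obs}(\mu;\kappa-\epsilon)\ge D_{obs}(\nu;\kappa)-\delta$. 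Letting $\delta\to 0$ finishes the argument.

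There is essentially no serious obstacle here; it is a soft monotonicity argument. The only points requiring a modicum of care are: (i) making sure $\kappa-\epsilon\in(0,1)$ so the observable diameter is well-defined, which is exactly why the hypothesis $\kappa>\epsilon$ is imposed (and $\kappa<1$ is inherited from the definition); (ii) the direction of the TV inequality — we need that shrinking the allowed missing mass on the $\mu$-side by $\epsilon$ compensates for the possible loss of mass when transferring a set's measure from $\mu$ to $\nu$; and (iii) noting that the supremum over test functions on the $\mu$-side includes the particular $f$ chosen to be near-optimal on the $\nu$-side, so no regularity or compactness is needed. I would also remark that equality need not hold and that the lemma is applied with $\mu$ the pushforward $g_\sharp\mu$ of Corollary~\ref{cor_dist_proj_lip} and $\nu=\gamma$, which is why the factor-of-two TV estimate there matters only through the value of $\epsilon$.
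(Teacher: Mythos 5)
Your proof is correct and follows essentially the same route as the paper: both rest on the observation that the total variation bound transfers the mass constraint from the $\mu$-side to the $\nu$-side for the same $1$-Lipschitz test function, the paper phrasing it as a proof by contradiction while you argue directly with a near-optimal $f$ and let $\delta\to 0$. Your explicit remark that $d_{TV}(f_\sharp\mu,f_\sharp\nu)\le d_{TV}(\mu,\nu)$ makes the step the paper leaves implicit cleanly justified, so nothing is missing.
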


\begin{proof}
We shall argue by contradiction. Fix $\kappa$, and assume that $D_{obs}(\mu; \kappa - \epsilon) < D_{obs}(\nu; \kappa) - t$ for some $t > 0$. Then for any $f$ $1$-Lipschitz there exists $E \in \R$ such that $\mu(E) \geq 1 - \kappa + \epsilon$ and $\operatorname{diam}(f(E)) \leq D_{obs}(\nu; \kappa) - t$. But then $\nu(E) \geq 1 - \kappa + \epsilon - d_{TV}(\mu, \nu) \geq 1 - \kappa$, and therefore, taking the sup over all $f$, we get $D_{obs}(\nu; \kappa) \leq D_{obs}(\nu; \kappa)-t$ hence a contradiction. 
\end{proof}

Since the function $g$ in Corollary \ref{cor_dist_proj_lip} is $1$-Lipschitz, by definition of observable diameter, we infer
 $$D_{obs}(\mu; \cdot) \geq D_{obs}( g_{\sharp}\mu, \cdot)$$
 on $(0,1)$ (see \cite{Shioya} for a proof). Therefore, by combining Theorem \ref{ObsSep}, Corollary \ref{cor_dist_proj_lip}, and Lemma \ref{lem_obs_tv}, we get the proof of Theorem \ref{sg_to_diam}

\subsection{Proof of Theorem \ref{thm_dobs_to_sg}}

We assume throughout this part that

\begin{equation}\label{amod}
D_{obs}((M,d,\mu); \kappa) \geq D_{obs}((\R, |\cdot|, \gamma); \kappa)     -\epsilon
\end{equation}
% & \leq   Sep((M,d,\mu); \kappa/2)
for a given $0<\kappa<1$ and $\epsilon>0$ small enough. Our aim is to show  Theorem \ref{thm_dobs_to_sg}, namely that such an RCD-space has almost minimal spectral gap.

Let us denote by $V_s(A)$ the $s$-open neighborhood of a subset $A$ of a given metric space ($(\R,|\cdot|)$ in most cases).

As a consequence of Theorem \ref{ObsCompa}, we get 

\begin{equation}\label{amsd} 
Sep((M,d,\mu); \kappa/2) \geq Sep((\R,|\cdot|,\gamma);\kappa/2) -\epsilon.
\end{equation}
Instead of the observable diameter, we shall mainly deal with the separation distance through \eqref{amsd}. The proof mostly relies on a careful study of the 1D case. We will then infer the bound on the spectral gap by means of the disintegration property. In what follows, $(\R, |\cdot|, m)$ stands for an $RCD(1,\infty)$-space, namely $m=e^{-\phi(x)}\, dx$ is a probability measure such that $\phi(x) - x^2/2$ is convex.

 \subsubsection{The $1D$ case}
 
In this part we assume that  $(\R, |\cdot|, m)$ is an RCD$(1,\infty)$-space such that for some $\theta>0$ and for $\eta>0$ sufficiently small

\begin{equation}\label{amsd1}
 D_{obs}((\R, |\cdot|, m); \theta) \geq D_{obs}((\R, |\cdot|, \gamma); \theta)     -\eta.
 \end{equation}
 
Let  us start with some basic properties regarding the Gauss measure. Recall that $\sigma(x) = \gamma(]-\infty, x])$.

\begin{lem}For $0<\kappa_1 \leq a<b < 1/2$ where $\kappa_1 $ is fixed and sufficiently small, the following estimates hold
 \begin{equation}\label{sigEst}
 {\sqrt{2\pi}}(b-a) \leq \sigma^{-1}(b)- \sigma^{-1}(a) \leq \frac{\sqrt{2\pi}}{\kappa_1^2}(b-a).
\end{equation}
\end{lem}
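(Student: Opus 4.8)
The statement to prove is a two-sided bound on the increment of the inverse Gaussian CDF $\sigma^{-1}$ over an interval $[a,b]$ contained in $[\kappa_1, 1/2)$. Since $\sigma$ is smooth and strictly increasing with $\sigma'(x) = \frac{1}{\sqrt{2\pi}}e^{-x^2/2}$, the natural route is the mean value theorem applied to $\sigma^{-1}$: there exists $\xi \in (a,b)$ with
$$\sigma^{-1}(b) - \sigma^{-1}(a) = \frac{b-a}{\sigma'(\sigma^{-1}(\xi))} = \sqrt{2\pi}\, e^{\sigma^{-1}(\xi)^2/2}(b-a).$$
So everything reduces to bounding the factor $e^{\sigma^{-1}(\xi)^2/2}$ from below by $1$ and from above by $\kappa_1^{-2}$, uniformly for $\xi$ in the relevant range.

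First I would establish the lower bound. Since $\xi < b < 1/2 = \sigma(0)$, monotonicity of $\sigma^{-1}$ gives $\sigma^{-1}(\xi) < 0$, so $e^{\sigma^{-1}(\xi)^2/2} \geq 1$ trivially; actually one even gets $e^{\sigma^{-1}(\xi)^2/2} \geq 1$ with equality only in the limit, which yields the clean lower bound $\sqrt{2\pi}(b-a) \leq \sigma^{-1}(b) - \sigma^{-1}(a)$.

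For the upper bound, I need $e^{\sigma^{-1}(\xi)^2/2} \leq \kappa_1^{-2}$ whenever $\xi \geq a \geq \kappa_1$, i.e. $\sigma^{-1}(\xi)^2/2 \leq 2\log(1/\kappa_1)$, i.e. $|\sigma^{-1}(\xi)| \leq 2\sqrt{\log(1/\kappa_1)}$. Equivalently, writing $t = |\sigma^{-1}(\xi)| = -\sigma^{-1}(\xi)$ (recall $\sigma^{-1}(\xi) < 0$), I need: $\xi \geq \kappa_1$ implies $t \leq 2\sqrt{\log(1/\kappa_1)}$. Since $\xi = \sigma(-t) = \gamma((-\infty,-t]) = \int_t^\infty \frac{1}{\sqrt{2\pi}}e^{-s^2/2}\,ds$, this is a standard Gaussian tail estimate: it suffices to show $\gamma((-\infty,-t]) < \kappa_1$ once $t > 2\sqrt{\log(1/\kappa_1)}$, i.e. a lower bound on the threshold $t$ forced by an upper bound on the tail mass. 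Using the crude bound $\gamma((-\infty,-t]) \leq \frac{1}{2}e^{-t^2/2}$ (valid for $t \geq 0$, e.g. from $\int_t^\infty e^{-s^2/2}ds \leq e^{-t^2/2}\int_t^\infty \frac{s}{t}e^{-(s^2-t^2)/2 \cdot 0}\dots$ — more simply, $\frac{1}{\sqrt{2\pi}}\int_t^\infty e^{-s^2/2}ds \le \frac{1}{\sqrt{2\pi}}e^{-t^2/2}\int_0^\infty e^{-st}\,\cdot$, or just invoke $\Phi(-t)\le \tfrac12 e^{-t^2/2}$), one sees that $t > 2\sqrt{\log(1/\kappa_1)}$ gives $\gamma((-\infty,-t]) \leq \frac12 e^{-2\log(1/\kappa_1)} = \frac12\kappa_1^2 < \kappa_1$ for $\kappa_1$ small, which contradicts $\xi \geq \kappa_1$. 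Hence $t \leq 2\sqrt{\log(1/\kappa_1)}$ and $e^{t^2/2} \leq \kappa_1^{-2}$, giving the upper bound.

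**Main obstacle.** There is no serious obstacle; the only point requiring a little care is the constant in the Gaussian tail bound and checking that "$\kappa_1$ sufficiently small" is exactly what makes $\frac12\kappa_1^2 < \kappa_1$ and absorbs the slack between the crude tail estimate and the exact threshold $2\sqrt{\log(1/\kappa_1)}$. One could alternatively avoid the tail estimate entirely and phrase the whole argument via the substitution $u = \sigma^{-1}$ and the elementary inequality $\sigma(-t) \ge c\, t^{-1} e^{-t^2/2}$ for the matching direction, but the one-sided crude bound above is enough for the stated (non-sharp) constant $\sqrt{2\pi}/\kappa_1^2$, and keeping $b < 1/2$ is what guarantees $\sigma^{-1}(\xi) < 0$ so that no control is needed near $+\infty$.
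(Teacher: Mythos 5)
Your proof is correct and follows essentially the same route as the paper: both arguments reduce the upper bound to the tail estimate $|\sigma^{-1}(\kappa_1)|\leq 2\sqrt{\ln(1/\kappa_1)}$, which turns into the lower bound $\kappa_1^2/\sqrt{2\pi}$ on the Gaussian density over the relevant range (you phrase it via the mean value theorem for $\sigma^{-1}$, the paper by integrating the density over $[\sigma^{-1}(a),\sigma^{-1}(b)]$). The only minor difference is your choice of the tail bound $\Phi(-t)\leq \tfrac12 e^{-t^2/2}$ instead of the Mills-ratio bound used in the paper, which even dispenses with the requirement that $\kappa_1$ be small.
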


\begin{proof}
We denote by $e^{-\phi_{\gamma}}$ the density of the standard Gauss measure on $\R$. By definition of $\sigma^{-1}$, we have
$$ \int_{\sigma^{-1}(a)}^{\sigma^{-1}(b)} e^{-\phi_{\gamma}(s)}\,ds = b-a.$$
The first inequality follows readily from this equality. For the second one, we need to estimate $|\sigma^{-1}(\kappa_1)|$ from above. We write for $\kappa_1 < 1/2$
\begin{eqnarray*} 
\kappa_1 & = & \int_{-\infty}^{\sigma^{-1}(\kappa_1)} e^{-\phi_{\gamma}(s)} \, ds \\
         & \leq & \frac{1 }{\sqrt{2\pi}|\sigma^{-1}(\kappa_1)|} e^{-(\sigma^{-1}(\kappa_1))^2/2}.
\end{eqnarray*}
From the inequality above, we infer $|\sigma^{-1} (\kappa_1)| \leq 2 \sqrt{-\ln \kappa_1}$  as soon as $\sigma^{-1}(\kappa_1) \leq -1$, which in turn implies the needed lower bound on $e^{-\phi_{\gamma}(s)}$  on $[\sigma^{-1}(a), \sigma^{-1}(b)]$ for $\kappa_1 < \gamma(]-\infty, -1])$.

\end{proof}

\begin{rque}\label{EstSep}Recall that $Sep((\R,|\cdot|,\gamma);\theta/2) = 2|\sigma^{-1} (\theta/2)|$. Thus, the preceding lemma also gives bounds on the separation distance on the Gauss space .
\end{rque}

Easy computations assert that the interval $[\sigma^{-1} (\theta/2), -\sigma^{-1} (\theta/2)]$ is the only closed interval of given length $R:= 2|\sigma^{-1} (\theta/2)|$ whose $\gamma$-measure is maximal, namely $1-\theta$. More precisely, there exists a constant $C(R)$, depending continuously on $R>0$, such that

\begin{equation}\label{stabObsGa}
\left| \gamma([s-R/2,s+R/2])  -(1-\theta)\right| \geq C(R)\, s^2,
\end{equation}
for any $s$ sufficiently small (say, such that $  \gamma([s-R/2,s+R/2]) \geq 1/2$).

 \begin{lem}\label{closemass} Let $(\R, |\cdot|, m)$ be a $CD(1,\infty)$-space.  Let $A_1,A_2\subset \R $ be two Borel subsets. % such that $\max \{ m(A_1), m(A_2)\} <1/2$ and $d(A_1,A_2) >0$. 
 Then, the following inequality holds
$$ d(A_1,A_2) \leq -\sigma^{-1}(m(A_1)) - \sigma^{-1}(m(A_2)).$$
In particular, assuming $\min \{ m(A_1), m(A_2)\} \geq \theta/2>0$ and 
$$d(A_1,A_2) \geq Sep((\R,|\cdot|,\gamma);\theta/2) -2\eta>0,$$ 
the following inequality holds
$$ \max\{ m(A_1), m(A_2) \} \leq \theta/2 + \sqrt{2/\pi}\, \eta.$$

\end{lem}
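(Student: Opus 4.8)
\textbf{Proof plan for Lemma \ref{closemass}.}

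The plan is to build on the isoperimetric comparison already used in the proof of Theorem \ref{ObsCompa}. First I would prove the main inequality $d(A_1,A_2) \leq -\sigma^{-1}(m(A_1)) - \sigma^{-1}(m(A_2))$. Set $r := d(A_1,A_2)$, so that $A_1 \cap V_r(A_2) = \emptyset$ and hence $m(A_1) \leq 1 - m(V_r(A_2))$. Applying the integrated form of the RCD$(1,\infty)$ isoperimetric inequality (Theorem \ref{IIn}) exactly as in the proof of Theorem \ref{ObsCompa}, one gets $m(V_r(A_2)) \geq \gamma(]-\infty, \sigma^{-1}(m(A_2)) + r])$. Combining, $m(A_1) \leq 1 - \gamma(]-\infty, \sigma^{-1}(m(A_2)) + r]) = \gamma([\sigma^{-1}(m(A_2)) + r, +\infty))$, which by symmetry of $\gamma$ equals $\sigma(-\sigma^{-1}(m(A_2)) - r)$. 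Since $\sigma$ is increasing, applying $\sigma^{-1}$ yields $\sigma^{-1}(m(A_1)) \leq -\sigma^{-1}(m(A_2)) - r$, i.e.\ $r \leq -\sigma^{-1}(m(A_1)) - \sigma^{-1}(m(A_2))$, as claimed.

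For the second assertion, suppose $\min\{m(A_1), m(A_2)\} \geq \theta/2$ and $d(A_1,A_2) \geq Sep((\R,|\cdot|,\gamma);\theta/2) - 2\eta = 2|\sigma^{-1}(\theta/2)| - 2\eta = -2\sigma^{-1}(\theta/2) - 2\eta$ (using $\sigma^{-1}(\theta/2) < 0$). Plugging the first inequality into this lower bound gives
$$-2\sigma^{-1}(\theta/2) - 2\eta \leq -\sigma^{-1}(m(A_1)) - \sigma^{-1}(m(A_2)).$$
Assume without loss of generality $m(A_1) = \max\{m(A_1), m(A_2)\}$. Since $m(A_2) \geq \theta/2$ we have $\sigma^{-1}(m(A_2)) \geq \sigma^{-1}(\theta/2)$, so $-\sigma^{-1}(m(A_2)) \leq -\sigma^{-1}(\theta/2)$, and therefore $-\sigma^{-1}(m(A_1)) \geq -\sigma^{-1}(\theta/2) - 2\eta$, i.e.\ $\sigma^{-1}(m(A_1)) \leq \sigma^{-1}(\theta/2) + 2\eta$. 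Now I would apply $\sigma$ and use a Lipschitz bound on $\sigma$: since the Gaussian density is bounded by $1/\sqrt{2\pi}$, $\sigma$ is $(1/\sqrt{2\pi})$-Lipschitz, hence
$$m(A_1) = \sigma(\sigma^{-1}(m(A_1))) \leq \sigma(\sigma^{-1}(\theta/2) + 2\eta) \leq \theta/2 + \frac{2\eta}{\sqrt{2\pi}} = \theta/2 + \sqrt{2/\pi}\,\eta.$$

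I do not anticipate a serious obstacle here: the argument is essentially a repackaging of the separation-distance comparison from Theorem \ref{ObsCompa}, with the only new ingredients being the elementary observation that $\sigma$ is globally Lipschitz with constant $1/\sqrt{2\pi}$ and careful bookkeeping of signs (recalling $\sigma^{-1}(\theta/2) < 0$ so that $Sep((\R,|\cdot|,\gamma);\theta/2) = -2\sigma^{-1}(\theta/2)$). The mild subtlety to watch is that Theorem \ref{IIn} is stated for RCD$(1,\infty)$ spaces whereas the lemma is phrased for $CD(1,\infty)$ spaces on $\R$; but a one-dimensional $CD(1,\infty)$ space is automatically RCD$(1,\infty)$ (the Cheeger energy on $\R$ is automatically quadratic), so the isoperimetric inequality applies, and this point should be noted in passing.
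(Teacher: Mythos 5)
Your proof is correct and follows essentially the same route as the paper: the integrated isoperimetric inequality applied to an $r$-neighborhood gives $\sigma^{-1}(m(A_1))+r\leq-\sigma^{-1}(m(A_2))$, and the second claim follows from the bound $1/\sqrt{2\pi}$ on the Gaussian density (you use the Lipschitz bound on $\sigma$, the paper the equivalent reverse bound on $\sigma^{-1}$ from \eqref{sigEst}). Your remark on the CD versus RCD phrasing in dimension one is a sensible aside and matches the paper's own Remark \ref{remclosemass} that the argument only uses the isoperimetric inequality.
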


\begin{proof}

The proof is a simple consequence of an integrated version of the isoperimetric inequality: given a Borel subset $A$ of $\R$ and $t \geq 0$,
\begin{equation}\label{eII}
m(V_t(A)) \geq \gamma((-\infty, \sigma^{-1}(m(A)) +t]).
\end{equation}
In particular, for $r = d(A_1,A_2)$, %it reads $\gamma(\gamma((-\infty, \sigma^{-1}(\mu(A_1)) +R]) \leq m(V_R(A_1)$.
 \begin{eqnarray*}
\gamma((-\infty, \sigma^{-1}(m(A_1)) +r]) &  \leq & m(V_r(A_1)) \\
               & \leq & 1 -m(A_2), \\
 			  & \leq & \gamma((-\infty,|\sigma^{-1}(m(A_2))|]),
 	%		  & \leq &\gamma ((-\infty, \sigma^{-1}(\mu(A_1)) + Sep((\R,|\cdot|,\gamma);\kappa/2)),
 \end{eqnarray*}
where the second inequality follows from $A_2 \cap V_r(A_1) =\emptyset$. The inequality relating the expressions at the ends implies
$$ \sigma^{-1}(m(A_1)) +r \leq -\sigma^{-1}(m(A_2))$$
and the first inequality is proved.

To prove the second statement let us rewrite the first inequality as
\begin{equation}\label{es}d(A_1,A_2) \leq 2|\sigma^{-1} (\theta/2)| + (\sigma^{-1}(\theta/2) - \sigma^{-1}(m(A_1)) + (\sigma^{-1}(\theta/2) - \sigma^{-1}(m(A_2)).
\end{equation}
We are left with estimating $ (\sigma^{-1}(\theta/2) - \sigma^{-1}(m(A_i))$ from above. According to \eqref{sigEst}, we have
$$  \sigma^{-1}(m(A_i))  - \sigma^{-1}(\theta/2) \geq \sqrt{2\pi} (m(A_i) -\theta/2).$$
 By combining \eqref{es} together with the above inequality, we get the second claim.

\end{proof}

\begin{rque}\label{remclosemass}
The proof above applies verbatim to general RCD$(1,\infty)$-spaces since it is mainly based on the isoperimetric inequality stated in Theorem \ref{IIn}. It gives the same results.
\end{rque}
In the same vein, we shall also need the following estimate: 

\begin{lem}\label{uppboundmass}Let $(\R,|\cdot |,m)$ be a CD$(1,\infty)$ space, and let $A_1,A_2\subset \R $ be two Borel subsets such that $\min \{ m(A_1), m(A_2)\} \geq \theta/2>0$ and 
$$r:= d(A_1,A_2) \geq Sep((\R,|\cdot|,\gamma);\theta/2) -2\eta>0.$$

Then, for any $s \in [0, r]$ and $i \in \{1,2\}$
$$ m((V_s(A_i)) \leq  \gamma((-\infty,\sigma^{-1}(m(A_i))+s]) + \sqrt{2/\pi}\,\eta. $$
\end{lem}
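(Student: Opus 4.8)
The plan is to combine the integrated isoperimetric inequality \eqref{eII}, which is valid on any $CD(1,\infty)$ or $RCD(1,\infty)$ space, with the two-sided mass bound just obtained in Lemma~\ref{closemass}. The key point is that \eqref{eII} already gives the lower bound $m(V_s(A_i)) \geq \gamma((-\infty, \sigma^{-1}(m(A_i)) + s])$; what we need here is a matching \emph{upper} bound, which must come from the fact that $V_s(A_i)$ cannot grow too fast because the \emph{other} set $A_j$ (with $j \neq i$) sits at distance at least $r$ from $A_i$ and hence at distance at least $r - s \geq 0$ from $V_s(A_i)$.

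First I would fix $i$, say $i = 1$, and let $j = 2$. For $s \in [0,r]$ we have $V_s(A_1) \cap V_{r-s}(A_2) = \emptyset$, since any point within distance $s$ of $A_1$ and within distance $r-s$ of $A_2$ would force $d(A_1, A_2) < r$. Therefore
$$m(V_s(A_1)) \leq 1 - m(V_{r-s}(A_2)) \leq 1 - \gamma\big((-\infty, \sigma^{-1}(m(A_2)) + r - s]\big) = \gamma\big([\sigma^{-1}(m(A_2)) + r - s, +\infty)\big),$$
where the second inequality is again \eqref{eII} applied to $A_2$ with parameter $r-s \geq 0$. By symmetry of $\gamma$ this equals $\gamma\big((-\infty, -\sigma^{-1}(m(A_2)) - r + s]\big)$. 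So it suffices to show that $-\sigma^{-1}(m(A_2)) - r + s \leq \sigma^{-1}(m(A_1)) + s + (\text{error})$, i.e. that $-\sigma^{-1}(m(A_2)) - \sigma^{-1}(m(A_1)) \leq r + \sqrt{2/\pi}\,\eta$, and then use that $\gamma$ has density bounded by $1/\sqrt{2\pi}$ to convert the bound on the upper endpoints into the additive error $\sqrt{2/\pi}\,\eta$ on the measures.

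The remaining inequality $-\sigma^{-1}(m(A_1)) - \sigma^{-1}(m(A_2)) \leq r + \sqrt{2/\pi}\,\eta$ is where the hypothesis $r \geq Sep((\R,|\cdot|,\gamma); \theta/2) - 2\eta = 2|\sigma^{-1}(\theta/2)| - 2\eta$ enters. Indeed, from Lemma~\ref{closemass} (second statement) we know $m(A_k) \leq \theta/2 + \sqrt{2/\pi}\,\eta$ for $k = 1,2$, hence $\sigma^{-1}(m(A_k)) \leq \sigma^{-1}(\theta/2 + \sqrt{2/\pi}\,\eta)$, and since the density of $\gamma$ near $\sigma^{-1}(\theta/2)$ is at most $1/\sqrt{2\pi}$ we get $\sigma^{-1}(m(A_k)) \leq \sigma^{-1}(\theta/2) + \sqrt{2\pi}\cdot\sqrt{2/\pi}\,\eta/1$... more carefully, $\sigma^{-1}(\theta/2 + t) - \sigma^{-1}(\theta/2) \leq \sqrt{2\pi}\, t / \kappa_1^2$ by \eqref{sigEst}; combined with the lower bound $\min\{m(A_1),m(A_2)\}\geq\theta/2$ which gives $\sigma^{-1}(m(A_k)) \geq \sigma^{-1}(\theta/2)$, this pins down each $\sigma^{-1}(m(A_k))$ to within an $O(\eta)$ window around $\sigma^{-1}(\theta/2)$, so $-\sigma^{-1}(m(A_1)) - \sigma^{-1}(m(A_2)) \leq 2|\sigma^{-1}(\theta/2)| + O(\eta) \leq r + 2\eta + O(\eta)$, and adjusting constants absorbs everything into $\sqrt{2/\pi}\,\eta$. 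The main obstacle is purely bookkeeping: tracking the various $O(\eta)$ terms through the two applications of \eqref{sigEst} and making sure the final constant is exactly $\sqrt{2/\pi}$ as claimed — there are no conceptual difficulties beyond the disjointness observation $V_s(A_1) \cap V_{r-s}(A_2) = \emptyset$, which is the one genuinely new ingredient here.
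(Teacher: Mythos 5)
Your proposal is correct and follows essentially the same route as the paper: the disjointness $V_s(A_1)\cap V_{r-s}(A_2)=\emptyset$, the integrated isoperimetric inequality \eqref{eII} applied to $A_2$ at scale $r-s$, the symmetry of $\gamma$, and finally the Gaussian density bound $1/\sqrt{2\pi}$ to convert an endpoint shift into the additive error. The only superfluous step is your detour through the second statement of Lemma~\ref{closemass} and \eqref{sigEst}: the endpoint comparison needs only $m(A_2)\geq\theta/2$ (so $-\sigma^{-1}(m(A_2))\leq|\sigma^{-1}(\theta/2)|$) together with $r\geq 2|\sigma^{-1}(\theta/2)|-2\eta$, which shifts the endpoint by exactly $2\eta$ and yields the constant $\sqrt{2/\pi}$ without any absorption of extra $O(\eta)$ terms.
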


\begin{proof}

We argue as in the proof of the previous lemma. Set $i=1$.
 \begin{eqnarray*}
 m(V_s(A_1))  & \leq &1 -m(V_{r-s}(A_2)) , \\
 		&\leq	& 1 - \gamma( [ -\sigma^{-1}(m(A_2))-r +s, +\infty)) \\
 			  & \leq & \gamma((-\infty, -\sigma^{-1}(m(A_2)) +2\sigma^{-1}(\theta/2) +2\eta +s]), \\
 			   & \leq & \gamma((-\infty, \sigma^{-1}(\theta/2)) +s + 2\eta]),
 			  \end{eqnarray*}
where we use the symmetry of $\gamma$, the inequality on $r=d(A_1,A_2)$ to infer the inequality on the third line, and $m(A_1) \geq \theta/2$ to get the estimate in the last one. The proof then follows from $ \gamma((c, c+\delta) )\leq \frac{1}{\sqrt{2\pi}} \delta,$ where $c$ is arbitrary.

\end{proof}

Let us fix some notation. Let $A_1$ and $A_2$ as in Lemma \ref{uppboundmass}. 
Without loss of generality, one can assume that the interval $[a_-,a_+]$ is such that $(a_-,a_+,) \cap (A_1 \cup A_2) =\emptyset$ and $a_+ -a_- = d(A_1,A_2)$. Up to reversing $A_1$ and $A_2$, we can further assume that $a_- \in \overline{A_1}$ and $a_+ \in \overline{A_2}$.

Our next goal is to show that $\phi$ is uniformly close to $\phi_{\gamma}$ on $[a_-,a_+]$. To this aim, we shall use properties of the optimal map $T$ such that $T_{\sharp} \, \gamma = m$. Let us recall that $T$ is non-decreasing and $1$-Lipschitz, as a consequence of Caffarelli's contraction theorem \cite{Caf00, FGP20}.

We set 
\begin{equation}\label{defalpha}
\alpha_-:= \sigma^{-1}(m(A_1)) \mbox{ and } \alpha_+:= \sigma^{-1}(1-m(A_2))= -\sigma^{-1}(m(A_2)),
\end{equation}  in other terms $T(\alpha_{\pm})= a_{\pm}$.

\begin{prop}\label{UnifEsti} Let $(\R, |\cdot|, m=e^{-\phi(s)}ds)$ be a CD$(1,\infty)$ space such that for $\theta>0$ and $0<\eta <\eta_0(\theta)$,
$$   d(A_1,A_2) \geq Sep((\R,|\cdot|,\gamma);\theta/2) -2\eta>0.$$
Then, up to translating $m$ so that the median is at the origin, there exists $C=C(\theta)>0$ such that for any $s \in [a_-,a_+]$,
$$ |\phi(s) - \phi_{\gamma} (s) | \leq  C\eta^{1/2}. $$
\end{prop}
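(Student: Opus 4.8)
The goal is to show that the density $e^{-\phi}$ of $m$ is close, in a supremum sense on the interval $[a_-,a_+]$, to the Gaussian density $e^{-\phi_\gamma}$. The natural object to exploit is the optimal transport map $T$ with $T_\sharp\gamma=m$, which by Caffarelli's contraction theorem is non-decreasing and $1$-Lipschitz. Since $m$ is $CD(1,\infty)$ and $\gamma$ is the extremal space, one expects $T$ to be close to the identity exactly on the region where $m$ is ``saturating'' the isoperimetric/separation inequality — and that region is $[\alpha_-,\alpha_+]$ on the Gaussian side, $[a_-,a_+]$ on the $m$ side. The strategy is: (i) show $T$ is quantitatively close to the identity on $[\alpha_-,\alpha_+]$; (ii) transfer this to a bound on $|\phi - \phi_\gamma|$ via the Monge-Ampère (change of variables) relation $\phi(T(x)) = \phi_\gamma(x) - \log T'(x)$, i.e. $\phi_\gamma(x)-\phi(T(x)) = \log T'(x)$; (iii) use $1$-Lipschitzness of $T$ (so $0\le T'\le 1$, $\log T'\le 0$) together with a lower bound on $\int T'$ over the relevant interval to get a two-sided control of $\log T'$, hence of $\phi-\phi_\gamma$.

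For step (i): from Lemma \ref{uppboundmass}, for every $s\in[0,r]$ we have $m(V_s(A_1))\le \gamma((-\infty,\alpha_-+s]) + \sqrt{2/\pi}\,\eta$, and translating this through $T$ gives $\gamma((-\infty, T^{-1}(a_-+s)]) \le \gamma((-\infty,\alpha_-+s])+\sqrt{2/\pi}\,\eta$, whence $T^{-1}(a_-+s)\le \alpha_-+s + C\eta$ (using the lower bound $\sqrt{2\pi}(b-a)\le\sigma^{-1}(b)-\sigma^{-1}(a)$ from \eqref{sigEst} to convert the mass gap into a gap in positions, on the range where densities are bounded below — this is where one needs $\kappa_1\le\theta/2$ and $\theta$ fixed). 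Running the symmetric argument from $A_2$ gives a matching bound from the other side, and since $T$ is non-decreasing and $1$-Lipschitz one gets $|T(x)-x - (a_- - \alpha_-)|\le C\eta$ for $x\in[\alpha_-,\alpha_+]$; after recentering $m$ so its median sits at the origin (which also pins $a_-+a_+\approx 0$ via the separation hypothesis and the stability estimate \eqref{stabObsGa}), this becomes $|T(x)-x|\le C\eta$ on $[\alpha_-,\alpha_+]$, and correspondingly $[a_-,a_+]\subset[\alpha_- - C\eta,\alpha_+ + C\eta]$.

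For steps (ii)–(iii): since $T$ is $1$-Lipschitz, $T'\le 1$ a.e., so $\log T'\le 0$, giving immediately $\phi_\gamma(x)\le \phi(T(x))$, i.e. one half of the desired inequality (the density of $m$ is everywhere $\le$ that of $\gamma$ after this alignment — consistent with $m$ being ``more spread out than extremal only by a little''; more carefully one gets $\phi(T(x))-\phi_\gamma(x)\ge 0$). For the reverse bound, note $\int_{\alpha_-}^{\alpha_+} T'(x)\,dx = T(\alpha_+)-T(\alpha_-) = a_+ - a_- = r \ge \sigma^{-1}(\theta/2)\cdot(-2) - 2\eta = (\alpha_+-\alpha_-) - C\eta$ by \eqref{defalpha} and the hypothesis; so $\int_{\alpha_-}^{\alpha_+}(1-T'(x))\,dx \le C\eta$, i.e. the nonnegative function $1-T'$ has small integral. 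Combined with a one-sided Lipschitz/monotonicity control on $T'$ coming from the $CD(1,\infty)$ condition on $m$ (the convexity of $\phi - x^2/2$, which forces $T'$ to vary in a controlled monotone fashion — concretely $(\log T')' = x - T(x)T'(x)$ and $\phi''\ge 1$ give a differential inequality for $T'$), one upgrades the $L^1$-smallness of $1-T'$ to a pointwise bound $1-T'(x)\le C\eta^{1/2}$ on a slightly shrunk interval, then on all of $[\alpha_-,\alpha_+]$ by monotonicity. This yields $0\le -\log T'(x)\le C\eta^{1/2}$, hence $|\phi(T(x)) - \phi_\gamma(x)|\le C\eta^{1/2}$; finally, using $|T(x)-x|\le C\eta$ and the local Lipschitz bound on $\phi_\gamma$ (and on $\phi$, which inherits boundedness of its derivative on this compact interval from the bounds just obtained), one converts this to $|\phi(s)-\phi_\gamma(s)|\le C\eta^{1/2}$ for $s\in[a_-,a_+]$.

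\textbf{Main obstacle.} The delicate point is step (iii): passing from the $L^1$-type smallness $\int(1-T')\le C\eta$ to a \emph{pointwise} bound of order $\eta^{1/2}$. This requires genuine use of the curvature hypothesis $\phi''\ge 1$ to get equicontinuity/monotonicity of $T'$ — without it, $1-T'$ could have a tall narrow spike. The $\eta^{1/2}$ (rather than $\eta$) loss is exactly the price of this interpolation-type argument, and getting the constant's dependence to be only on $\theta$ (through $R=2|\sigma^{-1}(\theta/2)|$ and the bounds in \eqref{sigEst}, \eqref{stabObsGa}) requires care in tracking where $\kappa_1$-type lower bounds on the Gaussian density are invoked. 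A secondary technical nuisance is justifying the recentering (median at the origin) is compatible with all the earlier estimates, but that is routine given \eqref{stabObsGa}.
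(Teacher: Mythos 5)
Your first half essentially reproduces the paper's argument and is fine: using Caffarelli's map $T$ with $T_\sharp\gamma=m$, the fact that $T-\mathrm{id}$ is non-increasing, the identity $\int_{\alpha_-}^{\alpha_+}T'=a_+-a_-$ together with the separation hypothesis (giving $\delta_--\delta_+=(\alpha_+-\alpha_-)-(a_+-a_-)\le 2\eta$), and the recentering via \eqref{stabObsGa}, one indeed gets $|T(x)-x|\le C\eta$ on $[\alpha_-,\alpha_+]$ and an $O(\eta)$ one-sided bound on $\phi-\phi_\gamma$ on $[a_-,a_+]$. However you have a sign slip in the Monge--Amp\`ere relation: from $e^{-\phi(T(x))}T'(x)=e^{-\phi_\gamma(x)}$ one gets $\log T'(x)=\phi(T(x))-\phi_\gamma(x)$, so $T'\le 1$ gives $\phi(T(x))\le\phi_\gamma(x)$, i.e.\ the \emph{upper} bound on $\phi$ (this is exactly the half the paper obtains with error $O(\eta)$), not the lower bound $\phi\ge\phi_\gamma$ that you call ``immediate''.

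The genuine gap is the other half, which is precisely where the exponent $1/2$ comes from: passing from the correct $L^1$ estimate $\int_{\alpha_-}^{\alpha_+}(1-T')\le 2\eta$ to a \emph{pointwise} bound $1-T'\le C\eta^{1/2}$ (equivalently $\phi\ge\phi_\gamma-C\eta^{1/2}$ on $[a_-,a_+]$). You acknowledge this is the main obstacle, but the mechanism you propose does not work as stated: the identity you invoke, $(\log T')'=x-T(x)T'(x)$, is incorrect (differentiating $\log T'=\phi(T)-\phi_\gamma$ gives $(\log T')'(x)=\phi'(T(x))T'(x)-x$, which involves the unknown $\phi'$), and $T'$ is not monotone in general, so ``then on all of $[\alpha_-,\alpha_+]$ by monotonicity'' has no justification; nothing in your sketch excludes exactly the tall narrow spike of $1-T'$ that you yourself identify as the danger. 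The paper closes this gap by a different device: having the pointwise bound $\phi\le\phi_\gamma+C\eta$ on $[a_-,a_+]$, it compares total masses, $\int_{[a_-,a_+]}e^{-\phi}\le 1-m(A_1)-m(A_2)=\int_{[\alpha_-,\alpha_+]}e^{-\phi_\gamma}$, applies Markov's inequality to get $\gamma(\{\phi<\phi_\gamma-\nu\})\le C\eta/\nu$ with $\nu=\eta^{1/2}$, and then uses the \emph{convexity of} $h=\phi-\phi_\gamma$ (this is where CD$(1,\infty)$ enters pointwise): the sublevel set $\{h<-\nu\}$ is an interval of length $O(\eta^{1/2})$, and a Thales-type convexity argument, using $h\le C\eta$ on the whole interval of length of order $1$, bounds $\min h\ge -C(\theta)\eta^{1/2}$. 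Some ingredient of this kind -- exploiting the convexity of $\phi-\phi_\gamma$ rather than an unproved regularity or monotonicity of $T'$ -- is needed to upgrade the $L^1$ smallness to a pointwise bound; until you supply it, your proof is incomplete at its central step.
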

\begin{proof}
In this proof, $C(\theta)$ will be a constant that only depends on $\theta$, but which may change from line to line. Set $\delta_+:= a_+-\alpha_+$ and $\delta_-:=a_- -\alpha_-$. We first claim that
\begin{eqnarray*}
 \phi (s) &\leq & \phi_{\gamma} (s - \delta_+) \; \; \mbox{ on } [s_l, a_+]  \\
  \phi (s) & \leq & \phi_{\gamma} (s - \delta_-) \; \; \mbox{ on } [a_-, s_l],
\end{eqnarray*}
where $s_l = \rm{argmin} \, \phi$ (recall $\phi$ is $1$-convex).
We shall only prove the first inequality, the second one is proved in the same way. 

Setting the median of $m$ at the origin is equivalent to enforcing $T(0) = 0$, since the median of the Gaussian measure is also at the origin. Thanks to the monotonicity of $T$ and the Lipschitz bound recalled above, we notice that $ T(s) \geq s + \delta_+$ on $(-\infty,\alpha_+].$ By combining this together with the fact
that $e^{-\phi}$ is decreasing on $[s_l,+\infty)$, we get on $[s_l-\delta_+,\alpha_+]$ 
$$ e^{-\phi(T(s))} \leq e^{-\phi(s + \delta_+)}.$$
By definition of $T$, we have for almost every $s$, 
$$e^{-\phi_{\gamma}(s)}=e^{-\phi(T(s))} T'(s)\leq e^{-\phi(T(s))}, $$
 and the proof of the inequality is complete by continuity of $\phi $ and $\phi_{\gamma}$.

Next, we prove that $\delta_+ \simeq \delta_-\simeq T(0) = 0$. Using that $s \mapsto T(s) -s$ is non-increasing on $[\alpha_-,\alpha_+]$, we get that 
$$  0\leq   (  T(\alpha_-) -\alpha_-)  - (  T(\alpha_+)) -\alpha_+   )=  \delta_- - \delta_+ . = (\alpha_+-\alpha_-) -(a_+-a_-) $$
By assumption on $d(A_1,A_2) = a_+ -a_-$ and using that $-\sigma^{-1}$ is non-increasing, we obtain
\begin{eqnarray*}
a_+ -a_- & \geq &  Sep((\R,|\cdot|,\gamma);\theta/2) -2\eta \\
         & \geq & -2\sigma^{-1}(\theta/2) -2\eta \\
         & \geq & -\sigma^{-1} (m(A_1))  -\sigma^{-1} (m(A_2)) -2\eta \\
         & \geq & \alpha_+-\alpha_- -2\eta,
\end{eqnarray*}
where we use \eqref{defalpha} to get the last inequality. Therefore, we obtain
\begin{equation}\label{deltaEst}
\left\{  \begin{array}{rcl} & 0 & \leq  \delta_- -\delta_+ \leq 2\eta  \\
\mbox{ and }  & \delta_+ & \leq  T(0) = 0 \leq \delta_-
\end{array}\right.
\end{equation}  by monotonicity. As a consequence, $|\delta_+| + |\delta_-| \leq 2\eta$. 

% These properties also imply
% $$  \left| \int_{[s_l-\delta_-,s_l-\delta_+]}  e^{-\phi_{\gamma}(s)} \,ds  \right|  \leq \sqrt{2/\pi}\, \eta.$$
 
%We now aim to prove that, for $\eta$ small enough,  $|T(0)|\leq C(\theta)$. 
Now recall that by definition of $\alpha_{\pm}$,  $\min (\gamma ((-\infty,\alpha_-]), \gamma([\alpha_+,+\infty))) \geq \theta/2$. Moreover, Lemma \ref{closemass} implies
\begin{equation}\label{EstiAlph} \max (\gamma ((-\infty,\alpha_-]), \gamma([\alpha_+,+\infty))) \leq \theta/2 + \sqrt{2/\pi} \, \eta.
\end{equation}
Hence, \eqref{stabObsGa} implies the existence of $C(\theta)>0$ such that
\begin{equation}\label{EstiAlphII} |(\alpha_+  + \alpha_-)|/2 \leq C(\theta) \sqrt{\eta},
\end{equation}

By combining this together  with \eqref{deltaEst}, we infer
\begin{equation}\label{ApproMid} \left|\frac{a_++a_-}{2}\right| \leq C \sqrt{\eta}.
\end{equation}
As a consequence, 
\begin{align}\label{EstiPhi}
 \phi (s) &\leq \phi_{\gamma} (s -\delta_+)  \notag \\
&= \phi_\gamma(s) -\delta_+ s + \delta_+^2/2 \notag \\
&\leq \phi_\gamma(s) + C(\theta)\eta
\end{align} 
on $[a_-,a_+] \subset [-C(\theta), C(\theta)]$.

The last step of the proof relies on Markov's inequality and the fact that $\phi-\phi_{\gamma}$ is a convex function. More precisely, given $\nu >0$, we set
$$S_{\nu}:= \{ s \in [a_-,a_+]\, | \phi(s) \geq \phi_{\gamma}(s) -\nu \}.$$
Observe that $(a_-,a_+) \cap (A_1 \cup A_2)=\emptyset$ implies
\begin{equation}\label{EstiInt4.11} \int_{[a_-,a_+]} e^{-\phi(s)} \,ds  \leq  1 - m(A_1) -m(A_2)=  \int_{[\alpha_-,\alpha_+]}  e^{-\phi_{\gamma}(s)} \,ds.
 \end{equation}
 Besides,  \eqref{EstiAlph} implies the existence of $C>0$ such that 
 $$ \int_{[a_-,a_+]} e^{-\phi_{\gamma} (s)} \, ds \geq  \int_{[\alpha_-,\alpha_+]} e^{-\phi_{\gamma} (s)}\, ds -C\eta.$$

Using \eqref{EstiPhi}, we have
\begin{eqnarray*}
\int_{[a_-,a_+]} e^{-\phi(s)} \,ds  & \geq & e^{\nu}\, \int_{S_{\nu}^c}  e^{-\phi_{\gamma}(s )} \,ds  + e^{-C \eta}\, \int_{S_{\nu}}   e^{-\phi_{\gamma}(s)} \,ds   \\
        & \geq & \big(e^{\nu}-1\big) \, \int_{S_{\nu}^c}  e^{-\phi_{\gamma}(s)} \,ds  + e^{-C \eta}\left( \int_{[\alpha_-,\alpha_+]}  e^{-\phi_{\gamma}(s)} \,ds  -C\, \eta\right).
\end{eqnarray*}
This estimate and \eqref{EstiInt4.11} yield
$$ \gamma(  S_{\nu}^c ) \leq C \, \frac{\eta}{e^{\nu}-1} \leq C \, \frac{\eta}{\nu},$$
which suggests to choose $\nu= {\eta}^{1/2}$.

Now, the convexity of $h:=\phi-\phi_{\gamma}$  forces $S_{\nu}^c$ to be an interval, and the Lebesgue measure of $S^c_{\eta^{1/2}}$ is bounded from above by $C(\theta) \eta^{1/2}$. Thales' theorem and the convexity of $h$ then imply  $\min h \geq -{C}(\theta) {\eta}^{1/2}$ and the proof is complete.

\end{proof}

Let us set $a_{\eta}:= |\sigma^{-1}(\sqrt{\eta})|$.  Our goal is to extend the estimate on $|\phi-\phi_{\gamma}|$ from $[a_-,a_+]$ to a much larger interval (depending on $\eta$). This is the content of the next lemma.

\begin{lem}\label{EstPhiII} Let $(\R, |\cdot|, m=e^{-\phi(s)}ds)$ be a CD$(1,\infty)$ space such that for $\theta>0$ and $0<\eta <\eta_0(\theta)$
$$  Sep((\R,|\cdot|,m);\theta/2) \geq Sep((\R,|\cdot|,\gamma);\theta/2) -2\eta>0.$$
Then, up to translating $m$ so that the median is at the origin, there exists $C=C(\theta)>0$ and $b_\eta > 0$ such that, for any $s \in [-b_\eta,b_\eta]$,
$$ |\phi(s) - \phi_{\gamma} (s) | \leq  C\eta^{1/10}, $$
and $m((-\infty, -b_\eta]) \leq \gamma((-\infty, -b_\eta]) \leq C\eta^{1/10}$.
\end{lem}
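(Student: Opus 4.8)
The plan is to bootstrap the uniform estimate $|\phi-\phi_\gamma|\le C\eta^{1/2}$ on the (bounded) interval $[a_-,a_+]$ obtained in Proposition \ref{UnifEsti} to a much wider, $\eta$-dependent interval $[-b_\eta,b_\eta]$ with a slightly worse exponent. First I would recall that $\phi-\phi_\gamma$ is convex (since $\phi$ is $1$-convex and $\phi_\gamma(s)=s^2/2+\frac12\log(2\pi)$), and that from Proposition \ref{UnifEsti} it is already controlled, in absolute value, by $C(\theta)\eta^{1/2}$ at the two points $a_-,a_+$ (whose midpoint is $O(\sqrt\eta)$ by \eqref{ApproMid}, and which are at fixed distance $\asymp \operatorname{Sep}(\gamma;\theta/2)$ apart). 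A convex function that is small at two interior points cannot be very negative between them, but it can grow outside; so the key is to get a one-sided \emph{upper} bound $\phi\le\phi_\gamma+(\text{small})$ on a large interval, and then use convexity to turn this into a two-sided bound.

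The mechanism for the upper bound is the transport map $T$ with $T_\sharp\gamma=m$, which is nondecreasing and $1$-Lipschitz (Caffarelli). As in the proof of Proposition \ref{UnifEsti}, on $[s_l,+\infty)$ one has $e^{-\phi(T(s))}\le e^{-\phi(s+\delta_+)}$ with $|\delta_+|\le 2\eta$, and $e^{-\phi_\gamma(s)}=e^{-\phi(T(s))}T'(s)\le e^{-\phi(T(s))}$, giving $\phi(s+\delta_+)\le \phi_\gamma(s)$, i.e. $\phi(s)\le\phi_\gamma(s-\delta_+)=\phi_\gamma(s)+O(\eta(|s|+1))$ \emph{on the whole range where $T(s)$ stays to the right of $s_l$}. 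The point is that $T$ can only compress distances, so $T$ maps $[\alpha_+,\infty)$ into $[a_+,\infty)$ and the inequality $T(s)\ge s+\delta_+$ persists; since $\gamma([\alpha_+,\infty))\approx\theta/2$ is a fixed positive mass, $T$ is defined and the estimate propagates to all $s$ with $\gamma([s,\infty))$ bounded below — and we can afford to push $s$ out until that tail is of order $\sqrt\eta$, i.e. to $s\approx a_\eta=|\sigma^{-1}(\sqrt\eta)|\asymp\sqrt{\log(1/\eta)}$. This yields $\phi(s)\le\phi_\gamma(s)+C(\theta)\big(\eta\, a_\eta+\eta\big)\le\phi_\gamma(s)+C(\theta)\eta\sqrt{\log(1/\eta)}$ on an interval of half-width $\asymp a_\eta$, and symmetrically from the left.

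To convert this into a genuine two-sided bound with explicit $b_\eta$: set $h=\phi-\phi_\gamma$, convex, with $h\le C\eta\sqrt{\log(1/\eta)}=:\varepsilon_1$ on $[-a_\eta,a_\eta]$ (roughly), and $|h|\le C\sqrt\eta$ at the two fixed points $a_\pm\in[-C(\theta),C(\theta)]$. Convexity plus the upper bound on the big interval plus the near-zero value at $a_\pm$ forces, by a Thales/support-line argument exactly as at the end of Proposition \ref{UnifEsti}, that $h\ge -C(\theta)\big(\varepsilon_1\cdot a_\eta\big)^{1/2}$-type quantity on a shrunk interval — more precisely, writing the support line of $h$ at its minimizer and using that $h(a_\pm)$ is tiny while $h$ stays $\le\varepsilon_1$ out to distance $\asymp a_\eta$, one gets $|h|\le C(\theta)\,\varepsilon_1^{1/?}$ on $[-b_\eta,b_\eta]$ for a suitable $b_\eta$ slightly smaller than $a_\eta$; tuning the exponents and absorbing the logarithm conservatively gives the stated $C\eta^{1/10}$ bound. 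The final tail estimate $m((-\infty,-b_\eta])\le\gamma((-\infty,-b_\eta])\le C\eta^{1/10}$ follows: the second inequality is just $\gamma((-\infty,-b_\eta])\le\gamma((-\infty,-a_\eta])+O(b_\eta\text{-vs-}a_\eta\text{ gap})$, and for $b_\eta$ chosen appropriately this is $\lesssim\sqrt\eta\le\eta^{1/10}$; the first inequality $m((-\infty,-b_\eta])\le\gamma((-\infty,-b_\eta])$ comes from the isoperimetric comparison \eqref{eII}/Theorem \ref{IIn} applied with $A$ a half-line, or directly from $T$ being $1$-Lipschitz with $T(0)=0$ so that $T((-\infty,-b_\eta])\subset(-\infty,-b_\eta]$ once one checks the correct sign, whence $m((-\infty,-b_\eta])=\gamma(T^{-1}(\cdot))\le\gamma((-\infty,-b_\eta])$.

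The main obstacle I expect is making the propagation of the one-sided bound fully rigorous and keeping honest track of how far out $s$ can go: one must verify that $T(s)\ge s+\delta_+$ genuinely holds up to $s\asymp a_\eta$ (this uses monotonicity of $s\mapsto T(s)-s$ together with the mass constraint pinning $T(\alpha_+)=a_+$, plus that outside $[\alpha_-,\alpha_+]$ the quantity $T(s)-s$ stays between $\delta_+$ and $\delta_-$ — it need not be monotone there, but $1$-Lipschitzness of $T$ gives $T(s)-s\ge T(\alpha_+)-\alpha_+-0=\delta_+$ for $s\ge\alpha_+$? no — so some care is needed, likely replacing $\delta_+$ by a slightly larger error that still decays), and that the convexity/Thales step correctly produces the exponent $1/10$ rather than something worse. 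The second delicate point is the bookkeeping of all the $C(\theta)$'s and the precise definition of $b_\eta$ so that both the interior estimate and the tail estimate come out with the clean exponent $\eta^{1/10}$ simultaneously; this is routine but needs to be done carefully, choosing $b_\eta$ as, say, a small fractional power of $a_\eta$ times a constant.
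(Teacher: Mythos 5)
There is a genuine gap, and it sits exactly at the step you flag yourself. Your plan hinges on propagating the one-sided bound $\phi(s)\le\phi_\gamma(s-\delta_+)$ past $a_+$ up to $s\approx a_\eta$ via the inequality $T(s)\ge s+\delta_+$. But since $T$ is $1$-Lipschitz, $s\mapsto T(s)-s$ is non-increasing on all of $\R$, so for $s\ge\alpha_+$ you only get $T(s)-s\le\delta_+$ -- the wrong direction -- and there is no ``slightly larger error'' fix: the map can compress arbitrarily fast beyond $\alpha_+$ (think of $m$ close to a truncated Gaussian, where $T(s)-s\to-\infty$), so no pointwise upper bound $\phi\le\phi_\gamma+o(1)$ can be established on an interval fixed in advance reaching out to $a_\eta$ by this mechanism. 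The upper bound on $\phi-\phi_\gamma$ outside $[a_-,a_+]$ is precisely the hard part, and the paper obtains it only \emph{indirectly and on an a posteriori interval}: it first extends the \emph{lower} bound, using Markov's inequality on $[a_-,a_+]$ to find $\hat a_+$ with $|\phi'(\hat a_+)-\phi_\gamma'(\hat a_+)|\le C\eta^{1/4}$ and then the $1$-convexity of $\phi$ (support parabola) to get $\phi\ge\phi_\gamma-C\eta^{1/5}$ on $[-a_\eta,a_\eta]$; it then compares masses, showing $\bigl|\int_{[-a_\eta,a_\eta]\setminus[a_-,a_+]}(e^{-\phi}-e^{-\phi_\gamma})\bigr|\le C\sqrt\eta$, and applies Markov again: the set where $\phi-\phi_\gamma>\eta^{1/10}$ has $\gamma$-measure at most $C\eta^{1/10}$. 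Convexity of $\phi-\phi_\gamma$ makes the sublevel set $\{\phi-\phi_\gamma\le\eta^{1/10}\}$ an interval containing $[a_-,a_+]$, and $b_\eta$ is \emph{defined} as its edge inside $[-a_\eta,a_\eta]$; it is an output of the argument, not a parameter you choose. (Incidentally, the lower bound you propose to get from Thales/convexity is the easy direction -- plain convexity plus $|\phi-\phi_\gamma|\le C\sqrt\eta$ at the two points $a_\pm$ already gives $\phi-\phi_\gamma\ge-C\sqrt{\eta}\,a_\eta$ on $[-a_\eta,a_\eta]$ -- so your division of labor between the two mechanisms is inverted.)

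A second concrete problem is the tail estimate. If you fix $b_\eta$ a priori as ``a small fractional power of $a_\eta$'', i.e. $b_\eta\asymp(\log(1/\eta))^{c}$ with $c<1/2$, then $\gamma((-\infty,-b_\eta])\asymp e^{-b_\eta^2/2}$ is \emph{not} $O(\eta^{1/10})$ (it is larger than any power of $\eta$), so the stated bound $\gamma((-\infty,-b_\eta])\le C\eta^{1/10}$ would fail. In the paper this bound works precisely because $b_\eta$ is the edge of the sublevel set: the strip $(-a_\eta,-b_\eta]$ lies in the exceptional set of the Markov step, so $\gamma((-a_\eta,-b_\eta])\le C\eta^{1/10}$, while $\gamma((-\infty,-a_\eta])=\sqrt\eta$ by the definition of $a_\eta$; the comparison $m((-\infty,-b_\eta])\le\gamma((-\infty,-b_\eta])$ then follows from the integrated isoperimetric inequality with the median at the origin, as you correctly note. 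So the skeleton (bootstrap from Proposition \ref{UnifEsti}, convexity of $\phi-\phi_\gamma$, Gaussian tails) is right, but the key quantitative ingredient -- the mass-comparison-plus-Markov argument that simultaneously produces the upper bound and the definition of $b_\eta$ -- is missing, and the transport-map substitute you offer for it does not work beyond $\alpha_+$.
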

 
 \begin{proof}
First of all, by definition of $a_\eta$, $\gamma((-\infty, -\aet]) = \sqrt{\eta}$, and since the median of $m$ is at the origin, the Gaussian concentration inequality (which is the integrated form of the isoperimetric inequality for RCD$(1, \infty)$ spaces states that $m((-\infty, a_\eta]) \leq \gamma((-\infty, -a_\eta]) $. 

 Set $h  :=\phi -\phi_{\gamma}$.  Recall that $|h|\leq C\eta^{1/2}$ on $[a_-,a_+]$ and that $h$ is convex. Markov's inequality yields the existence of $\hat{a}_+ \in [a_-,a_+] $ such that
$$ |\phi'(\hat{a}_+) -\phi_{\gamma}'(\hat{a}_+) | \leq C \eta^{1/4}   \mbox{  and  } |a_+ -\hat{a}_+| \leq \eta^{1/4}.$$
For $s \geq \hat{a}_{+}$, the $1$-convexity of $\phi$ yields
$$ \phi(s) \geq \phi(\hat{a}_{+}) + \phi'(\hat{a}_{+}) (s-\hat{a}_{+}) +  1/2(s-\hat{a}_+)^2.$$
Noticing that $\phi_{\gamma}(\hat{a}_+)+ \phi_{\gamma}'(\hat{a}_+) (s-\hat{a}_+)  +  1/2(s-\hat{a}+)^2 = \phi_{\gamma}(x),$ we infer from the previous inequality:
$$ \phi(s) \geq \phi_{\gamma}(s) + E(s),$$
where $E(s) = -C {\eta}^{1/4} (1 + (s-\hat{a}_+)).$

\noindent Recall that $\aet = |\sigma^{-1}(\sqrt{\eta})| \sim \sqrt{\ln (1/\eta)}$ for $\eta \sim 0$ thus, for $\eta$ small enough, there exists a constant $C$ such that 
\begin{equation}\label{EtEsti}
\aet \leq C\sqrt{\ln (1/\eta)},
\end{equation}
and there exists a constant $C$ such that
 $$E(s) \geq -C {\eta}^{1/4} (1 + a_\eta) \geq -C \eta^{1/5}$$
on $[\hat{a}_{+}, \aet]$. Consequently, 
 
 \begin{equation}\label{UppEstEphi}
 \phi \geq \phi_{\gamma} - C \eta^{1/5 }
\end{equation}  
on $[{a}_{+}, \aet] \subset [\hat{a}_{+}, \aet]$.
 %  \int_{[\hat{a}_+, \aet]} e^{-\phi(s)}\, ds \leq e^{C\eta^{1/5}}\, \int_{[\hat{a}_+, \aet]} e^{-\phi_{\gamma} (s)} \, ds.

The same method applies on $[-\aet, \hat{a}_-]$ (where the definition of $\hat{a}_-$ is clear from the context) %defined in a similar fashion as $\hat{a}_+$)
  and leads to the same estimate $ \phi \geq \phi_{\gamma} - C \eta^{1/5}$.

We then have, using the estimates on the interval $[a_-, a_+]$ and that $|a_+ - \hat{a}_+| \leq  {\eta}^{1/4}$, 
\begin{align*}
\left| \int_{[-\aet,\aet]\setminus [{a}_-,{a}_+]} e^{-\phi(s)} -e^{-\phi_{\gamma}(s)} \, ds\right|&= \left|\gamma([{a}_-, {a}_+]) - m([{a}_-, {a}_+]) \right.\\ 
& \hspace{2cm} \left. + \gamma([-\aet, \aet]^c) - m([-\aet, \aet]^c)\right| \\
&\leq 2\gamma([\aet, + \infty)) + m((-\infty, -\aet]) \\
& \hspace{2cm} + m([\aet, + \infty)) + C\sqrt{\eta} \\ 
&\leq C\sqrt{\eta} .
\end{align*}

As in the proof of Proposition \ref{UnifEsti}, Markov's inequality and the estimates above imply that
$$S_{\eta}:=\{s\in  [-\aet,\aet]\setminus [{a}_-,{a}_+]| \,(\phi-\phi_{\gamma})(s) \leq \eta^{1/10}\}$$
satisfies
$$\gamma (S_{\eta}^c) \leq C \eta^{1/10}.$$

Finally, the convexity of $\phi-\phi_{\gamma}$ implies that $S_{\eta}$ is the union of two intervals; besides, thanks to Proposition \ref{UnifEsti}, $S_{\eta}$ contains a neighborhood of ${a}_-$ and ${a}_+$. Therefore $S_{\eta}^c$ is the union of two intervals $[-\aet,-\bet] \cup [\bet,\aet]$. %of length of order (at most) $\eta^{1/10}$.
We also have
$$m((-\infty, -b_\eta]) \leq \gamma((-\infty, -b_\eta]) \leq \gamma((-\infty,-\aet]) + \gamma ((-\aet,-\bet]) \leq C\eta^{1/10}.$$

 \end{proof}
 
 We are left with estimating the variance of the identity map relative to the measure $m$.

 \begin{lem}\label{VarEsti1D} Let $(\R, |\cdot|, m=e^{-\phi(s)}ds)$ be a CD$(1,\infty)$ space such that for $\theta>0$ and $0<\eta <\eta_0(\theta)$,
$$  Sep((\R,|\cdot|,m);\theta/2) \geq Sep((\R,|\cdot|,\gamma);\theta/2) -2\eta>0.$$
 Then, up to translating $m$ so that its median is at the origin, the following estimates hold
 
 $$ \left| \int_{\R} s \, dm(s)\right| \leq C \eta^{1/11},$$
 and
 $$ \int_{\R} s^2 \, dm(s) \geq 1 -C \eta^{1/11}.$$ 
 In particular, 
 $$ Var_m(s) \geq 1 -C \eta^{1/11}.$$
 
 \end{lem}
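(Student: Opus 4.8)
The goal is to transfer the pointwise control of $\phi$ on the interval $[-b_\eta, b_\eta]$ provided by Lemma \ref{EstPhiII} into control of the first and second moments of $m$. The strategy is to split each moment integral into the ``bulk'' part over $[-b_\eta, b_\eta]$ and a ``tail'' part over $\{|s| > b_\eta\}$, estimate the bulk part by comparison with the Gaussian, and control the tails using both the concentration estimate $m((-\infty,-b_\eta]) \leq \gamma((-\infty,-b_\eta]) \leq C\eta^{1/10}$ from Lemma \ref{EstPhiII} and the $1$-convexity of $\phi$.

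\textbf{Step 1: bulk comparison.} On $[-b_\eta, b_\eta]$ we have $|\phi - \phi_\gamma| \leq C\eta^{1/10}$, hence $e^{-\phi(s)} = e^{-\phi_\gamma(s)}(1 + O(\eta^{1/10}))$ uniformly on this interval. Therefore
\begin{align*}
\left|\int_{-b_\eta}^{b_\eta} s\, e^{-\phi(s)}\, ds - \int_{-b_\eta}^{b_\eta} s\, e^{-\phi_\gamma(s)}\, ds\right| &\leq C\eta^{1/10}\int_{\R} |s|\, d\gamma(s) \leq C\eta^{1/10},
\end{align*}
and the corresponding estimate with $s$ replaced by $s^2$. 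Since $\int_{-b_\eta}^{b_\eta} s\, d\gamma(s) = 0$ by symmetry, the first bulk integral is $O(\eta^{1/10})$; since $\int_{\R} s^2\, d\gamma = 1$ and the Gaussian tails beyond $\pm b_\eta$ are negligible (indeed $b_\eta$ is of order at least a fixed constant, in fact $b_\eta \to \infty$), the second bulk integral is $1 - O(\eta^{1/10})$, actually $1$ up to an exponentially small correction plus the $O(\eta^{1/10})$ term.

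\textbf{Step 2: tail control.} The delicate point is that $s^2 e^{-\phi(s)}$ could in principle carry non-negligible mass at infinity even though $m$ of the tail is small, so a bound on $m(\{|s|>b_\eta\})$ alone does not bound $\int_{|s|>b_\eta} s^2\, dm$. Here the $1$-convexity of $\phi$ saves us: since $\phi(s) - s^2/2$ is convex and $\phi$ is minimized at (essentially) the origin, for $s$ large we have $\phi(s) \geq \phi_\gamma(s) + \text{(linear term with controlled slope)}$, exactly as in the derivation of \eqref{UppEstEphi} in the proof of Lemma \ref{EstPhiII}; more simply, $1$-convexity gives $\phi(s) \geq \phi(b_\eta) + \phi'(b_\eta^+)(s - b_\eta) + \tfrac12(s-b_\eta)^2$ for $s \geq b_\eta$, which forces a Gaussian-type upper bound on $e^{-\phi(s)}$ with the right quadratic rate. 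Consequently $\int_{|s| > b_\eta} (1 + s^2) e^{-\phi(s)}\, ds$ is bounded by a quantity of the same order as the Gaussian tail $\int_{|s|>b_\eta}(1+s^2)\, d\gamma(s)$, up to the controlled perturbation from the linear term; using $b_\eta \sim \sqrt{\ln(1/\eta)}$ (analogously to \eqref{EtEsti}) makes this tail of order at most a small power of $\eta$. Balancing the exponents $1/10$ from the bulk against the tail exponent yields the stated $\eta^{1/11}$.

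\textbf{Step 3: assembling the moment bounds.} Combining Steps 1 and 2 gives $\left|\int_\R s\, dm\right| \leq C\eta^{1/11}$ and $\int_\R s^2\, dm \geq 1 - C\eta^{1/11}$. The variance bound follows immediately:
$$\Var_m(s) = \int_\R s^2\, dm - \left(\int_\R s\, dm\right)^2 \geq 1 - C\eta^{1/11} - C\eta^{2/11} \geq 1 - C\eta^{1/11}.$$

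\textbf{Main obstacle.} The crux is the tail estimate in Step 2: one must leverage the $1$-convexity of $\phi$ (not merely the smallness of the measure of the tail) to bound $\int_{|s|>b_\eta} s^2\, dm$, and one must keep careful track of how the linear correction term in the convexity inequality — whose slope is only controlled up to $O(\eta^{1/4})$ or so — interacts with the size $b_\eta \sim \sqrt{\ln(1/\eta)}$ of the interval, since a mismatch here is precisely what degrades the exponent from $1/10$ to $1/11$.
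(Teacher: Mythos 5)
Your proof is correct in outline and follows the same skeleton as the paper's (split each moment into the bulk $[-b_\eta,b_\eta]$, where Lemma \ref{EstPhiII} gives $e^{-\phi}=(1+O(\eta^{1/10}))e^{-\phi_\gamma}$, plus tails; note that for the second moment only the bulk lower bound is needed, so the $m$-tail really only matters for the first moment). The genuine difference is how the tail of $m$ is controlled. The paper does it with the Caffarelli contraction map $T$ already introduced before Proposition \ref{UnifEsti}: since $T$ is monotone, $1$-Lipschitz and normalized by $T(0)=0$, one has $0\leq T(s)\leq s$ on $[b_\eta,+\infty)$, and the change of variables $s=T(u)$ dominates $\int_{b_\eta}^{\infty} s\,dm$ by the Gaussian tail moment $\int_{b_\eta}^{\infty}u\,d\gamma(u)=e^{-\phi_\gamma(b_\eta)}\leq Cb_\eta\eta^{1/10}\leq C\eta^{1/11}$. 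You instead dominate the density pointwise in the tail via $1$-convexity, which is a legitimate and arguably more elementary alternative (it avoids invoking Caffarelli at this step); but as written your key display is vacuous without a lower bound on $\phi'(b_\eta^+)$, so that slope control must be made explicit. Two ways to do it: either re-run the $\hat a_+$ argument from the proof of Lemma \ref{EstPhiII} (the bound $\phi(s)\geq\phi_\gamma(s)-C\eta^{1/4}(1+s-\hat a_+)$ holds for \emph{all} $s\geq\hat a_+$, and the linear error is absorbed by shifting the Gaussian by $O(\eta^{1/4})$), or, more directly, use convexity of $h=\phi-\phi_\gamma$ together with $|h|\leq C\eta^{1/10}$ on $[-b_\eta,b_\eta]$ to get $h'(b_\eta^+)\geq (h(b_\eta)-h(0))/b_\eta\geq -2C\eta^{1/10}/b_\eta$, hence $\phi(s)\geq\phi_\gamma(s)-C\eta^{1/10}-2C\eta^{1/10}(s-b_\eta)/b_\eta$ for $s\geq b_\eta$; either version gives $\int_{|s|>b_\eta}|s|\,dm\leq C\eta^{1/11}$, matching the paper.

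One small correction: the Gaussian tails beyond $\pm b_\eta$ are \emph{not} exponentially small in $\eta$; the only available information is $\gamma(\{|s|>b_\eta\})\leq C\eta^{1/10}$, so the tail moments are of order $b_\eta\eta^{1/10}$ and $b_\eta^2\eta^{1/10}$ with $b_\eta\asymp\sqrt{\ln(1/\eta)}$, i.e. $O(\eta^{1/11})$ but not $O(\eta^{1/10})$. This is exactly the logarithmic loss that produces the exponent $1/11$ (there is no free parameter to ``balance''), and your concluding remarks show you understood this mechanism, so the slip does not affect the result.
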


 \begin{proof}
Let us start with some estimates relative to the Gaussian measure. Using $$\frac{e^{-\phi_{\gamma}}(\bet)}{\bet}\sim \int_{\bet}^{+ \infty} e^{-\phi_{\gamma}(s)} \, ds \leq C\eta^{1/10},$$
we infer, for $\eta$ small enough,

$$ \int_{\bet}^{+ \infty} s\, e^{-\phi_{\gamma}(s)} \, ds = e^{-\phi_{\gamma} (\bet)} \leq  C \bet \eta^{1/10} \leq C\eta^{1/10} \sigma^{-1}(\eta^{1/10}) \leq C\eta^{1/11}.$$
Moreover, thanks to 
$ \int_{\bet}^{+ \infty} s^2\, e^{-\phi_{\gamma}(s)} \, ds = \bet e^{-\phi_{\gamma}(\bet)} + \int_{\bet}^{+ \infty} \, e^{-\phi_{\gamma}(s)} \, ds$, we get
$$ \int_{\bet}^{+ \infty} s^2\, e^{-\phi_{\gamma}(s)} \, ds \leq  C\eta^{1/11}. $$

Now, we use Lemma \ref{EstPhiII} to infer similar bounds for $m$. More precisely,

\begin{eqnarray*} \left|\int_{[-\bet,\bet]} s\, e^{-\phi(s)} \, ds - \int_{[-\bet,\bet]} s\, e^{-\phi_{\gamma}(s)} \, ds \right| & \leq & \int_{[-\bet,\bet]} |s| \big(e^{|\phi(s)-\phi_{\gamma}(s)|}-1\big) e^{-\phi_{\gamma} (s)} \, ds \\
& \leq & \int_{[-\bet,\bet]}|s| \big(e^{C \eta^{1/10}}-1 \big)e^{-\phi_{\gamma} (s)} \, ds \\
  & \leq & 2 (1 -e^{-\phi_{\gamma}(\bet)}) {C \eta^{1/10}} \leq C \eta^{1/10}.
 \end{eqnarray*}   
We also have
 \begin{eqnarray*} \left|\int_{[-\bet,\bet]}s^2 e^{-\phi(s)} \, ds - \int_{[-\bet,\bet]} s^2 e^{-\phi_{\gamma}(s)} \, ds \right| & \leq & \int_{[-\bet,\bet]} s^2\big( e^{C \eta^{1/10}}-1\big) e^{-\phi_{\gamma} (s)} \, ds \\
   & \leq & C \eta^{1/10}.
 \end{eqnarray*}

Last, we claim that $0 \leq T(s) \leq s $ (resp. $ 0 \geq T(s)\geq s $) on $[\bet,+\infty) \subset [\alpha_+,+\infty)$ (resp. on $(-\infty,-\bet]$), since $T$ is monotone and $1$-Lipschitz and $T(0) = 0$.

With this property, we can estimate the remaining part of the expectation of $m$:
\begin{eqnarray*} \int_{[\bet, +\infty]} s\, e^{-\phi(s)} \, ds &\leq &
  \int_{[T^{-1}(\bet), +\infty)}  T(u) \, e^{-\phi_{\gamma}(u)} \, du \\
 & \leq & \int_{[\bet, +\infty)} u e^{-\phi_{\gamma}(u)} \, du \\
 & \leq & C\eta^{1/11},
\end{eqnarray*}
where we use the upper tail bound for the Gaussian measure. The same method applies on $(-\infty,-\bet]$, and the proof is complete.
 \end{proof}
 
%%%%%%% General Case.

\subsubsection{The general case}
According to \eqref{amsd}, there exists sets $A_1$ and $A_2$ such that
$$ d(A_1,A_2) \geq Sep((\R,|\cdot|,\gamma);\kappa/2) -2\epsilon,$$
and $\min (\mu(A_1), \mu(A_2)) \geq \kappa/2$.

We now introduce a needle decomposition relative to a function $f$ defined in terms of the sets $A_1$ and $A_2$ above. Precisely, we set
\begin{equation}\label{ndf}
f= \chi_{A_1} -\chi_{A_2} - \mu(A_1) + \mu(A_2).
\end{equation}
This function satisfies

$$ \int |f(x)| d(x,x_0)\, d\mu< \infty \mbox{         and       } \int f\, d\mu =0.$$

Applying the needle decomposition associated to $f$ defined on  $(M,d,\mu)$, we get that for $\Q$ a.e $q$, $\int_{X_q} f \, dm_q =0$. This yields for a.e $q$

$$ m_q(A_1\cap X_q) - m_q (A_2 \cap X_q) = \mu(A_1) - \mu(A_2).$$

Therefore, according to Lemma \ref{closemass} (see Remark \ref{remclosemass}), we get
\begin{equation}\label{mqE}
|m_q(A_1\cap X_q) - m_q (A_2 \cap X_q)|\leq \sqrt{2/\pi}\, \epsilon.
\end{equation} 

From this estimate, we infer the following result

\begin{lem}
Let $(M,d,\mu)$ be an RCD$(1,\infty)$ space admitting CD$(1,\infty)$ disintegration and such that \eqref{amod} holds. Then, relatively to the needle decomposition associated to $f$ as in \eqref{ndf}, there exists a set of needles $Q_{\delta}$ such that $\Q(Q_{\delta}) \geq 1 -{\delta}$, and for any needle $q \in Q_{\delta}$, we have
\begin{equation}\label{amod1d}
\min\{ m_q (A_1), m_q(A_2)\} \geq \kappa/2 - \frac{\sqrt{8/\pi} \,\epsilon}{\delta}-  \sqrt{2/\pi}\, \epsilon.
\end{equation} 
\end{lem}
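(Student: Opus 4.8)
The plan is to combine the averaged mass identity provided by the disintegration with a pointwise‑in‑$q$ upper bound on $m_q(A_i\cap X_q)$, and then run a Markov argument. I take the setup already fixed before the statement: the sets $A_1,A_2$ with $\min(\mu(A_1),\mu(A_2))\ge\kappa/2$ and $d(A_1,A_2)\ge Sep((\R,|\cdot|,\gamma);\kappa/2)-2\epsilon$, the needle decomposition associated with $f=\chi_{A_1}-\chi_{A_2}-\mu(A_1)+\mu(A_2)$, and the consequences $m_q(A_1\cap X_q)-m_q(A_2\cap X_q)=\mu(A_1)-\mu(A_2)$ for $\Q$-a.e.\ $q$ together with \eqref{mqE}.

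First I would check that $\mu(A_i\cap Z)=0$, so that property (1) of Theorem \ref{thm_needle_dec} gives the exact identity $\int m_q(A_i\cap X_q)\,d\Q(q)=\mu(A_i\cap T)=\mu(A_i)\ge\kappa/2$ (recall each $m_q$ is supported on $X_q$). Indeed $f$ vanishes $\mu$-a.e.\ on $Z$; since $A_1,A_2$ are disjoint and $\min(\mu(A_1),\mu(A_2))\ge\kappa/2>0$, the constant $\mu(A_1)-\mu(A_2)$ lies strictly between $-1$ and $1$, hence cannot coincide with $\chi_{A_1}-\chi_{A_2}$ — which equals $\pm1$ on $A_1\cup A_2$ — on a subset of $A_1\cup A_2$ of positive $\mu$-measure. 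After relabelling so that $\mu(A_1)\ge\mu(A_2)$, the above identity shows that for $\Q$-a.e.\ $q$ one has $m_q(A_2\cap X_q)=\min\{m_q(A_1\cap X_q),m_q(A_2\cap X_q)\}$.

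Then I would produce the pointwise upper bound on each needle. For $\Q$-a.e.\ $q$ the needle $(X_q,d,m_q)$ is a CD$(1,\infty)$ space (property (3)), identified by Proposition \ref{prop_struct_needle} with $(\R,|\cdot|,e^{-\psi_q}dx)$, $\psi_q''\ge1$, whose intrinsic distance agrees with the ambient one, so $A_1\cap X_q$ and $A_2\cap X_q$ lie at distance at least $d(A_1,A_2)\ge-2\sigma^{-1}(\kappa/2)-2\epsilon$ inside $X_q$. The first inequality of Lemma \ref{closemass}, applied on this needle, gives $-\sigma^{-1}(m_q(A_1\cap X_q))-\sigma^{-1}(m_q(A_2\cap X_q))\ge-2\sigma^{-1}(\kappa/2)-2\epsilon$; using $m_q(A_1\cap X_q)\ge m_q(A_2\cap X_q)$ and monotonicity of $\sigma^{-1}$, the left-hand side is at most $-2\sigma^{-1}(m_q(A_2\cap X_q))$, whence $\sigma^{-1}(m_q(A_2\cap X_q))\le\sigma^{-1}(\kappa/2)+\epsilon$, and since $\sigma'\le(2\pi)^{-1/2}$ this yields $m_q(A_2\cap X_q)\le\kappa/2+\sqrt{2/\pi}\,\epsilon$ for $\epsilon$ small (consistent with \eqref{mqE}). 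Writing $u:=\kappa/2+\sqrt{2/\pi}\,\epsilon$, the function $q\mapsto u-m_q(A_2\cap X_q)$ is nonnegative with $\int(u-m_q(A_2\cap X_q))\,d\Q(q)=u-\mu(A_2)\le\sqrt{2/\pi}\,\epsilon$, so Markov's inequality produces a set $Q_\delta$ with $\Q(Q_\delta)\ge1-\delta$ on which $u-m_q(A_2\cap X_q)$ is at most of order $\epsilon/\delta$; recalling that $m_q(A_2\cap X_q)$ is the smaller of the two masses then gives \eqref{amod1d} (I would not bother optimizing the constants).

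The main obstacle — really the only nontrivial point — is that the averaged identity $\int m_q(A_i\cap X_q)\,d\Q=\mu(A_i)$ is by itself useless for a lower bound: nothing prevents $m_q$ from charging $A_i$ with mass close to $1$ along a tiny set of needles and almost nothing along the rest. It is precisely the pointwise upper bound of the previous paragraph, coming from the integrated isoperimetric inequality on each needle via Lemma \ref{closemass} and Remark \ref{remclosemass}, that rules this out and lets Markov convert an average lower bound into a pointwise one on all but a $\delta$-fraction of needles. The verification that $Z$ contributes nothing is elementary but cannot be skipped.
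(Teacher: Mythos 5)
Your proof is correct and follows essentially the same route as the paper: the exact identity $\int m_q(A_i)\,d\Q=\mu(A_i)$ after checking that $Z$ carries no mass of $A_1\cup A_2$, a pointwise upper bound of the form $\kappa/2+C\epsilon$ on the needle masses obtained from the one-dimensional isoperimetric/separation estimates, and then Markov's inequality. The only (harmless) cosmetic difference is that you order the sets so that $m_q(A_2\cap X_q)$ is always the minimum and invoke the first inequality of Lemma \ref{closemass} directly on each needle, whereas the paper gets the same pointwise bound via Theorem \ref{ObsCompa} together with \eqref{mqE} and runs Markov on $m_q(A_1)$; your constants even come out slightly better than those in \eqref{amod1d}.
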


\begin{proof}
Since we assume there is a needle decomposition, there exists a partition of $M$: $M= \Tau \sqcup Z$ such that $\mu|_{\Tau}$ admits a needle decomposition and $f=0$ $\mu$-a.e. on $Z$. Now, by the very definition of $f$, $Z \subset (A_1 \cup A_2)^c$ (actually $Z=\emptyset$ whenever $\mu(A_1)\neq \mu(A_2)$). Therefore, we obtain
\begin{equation}\label{mma} \mu(A_i) = \int_{\Tau} \chi_{A_i} \, d\mu = \int_Q m_q(A_i) \, d\Q.
\end{equation}
Recall that each needle is in particular a geodesic, thus the distance between $A_1\cap X_q$ and $A_2 \cap X_q$ along $X_q$ is greater or equal to $d(A_1,A_2)$, where 
$$d(A_1,A_2) \geq Sep((\R,|\cdot|,\gamma);\kappa/2) -2\epsilon> Sep((\R,|\cdot|,\gamma);\kappa/2 + \sqrt{2/\pi} \epsilon)$$
according to \eqref{sigEst} and Remark \ref{EstSep}. Using Theorem \ref{ObsCompa}, we infer
$$ d(A_1,A_2) > Sep((X_q,|\cdot|,m_q);\kappa/2 +\sqrt{2/\pi} \epsilon).$$
This inequality yields 
$$ \min\{ m_q (A_1), m_q(A_2)\} \leq \kappa/2 + \sqrt{2/\pi} \epsilon,$$
and we can replace $\min$ by $\max$ in the above equation thanks to \eqref{mqE}, provided that $\sqrt{2/\pi} \epsilon$ is replaced by $2\sqrt{2/\pi} \epsilon$.

The end of the proof consists in applying Markov's inequality in \eqref{mma} combined with $\max\{ m_q (A_1), m_q(A_2)\} \leq \kappa/2 + \sqrt{8/\pi} \epsilon$.

\noindent Defining $Q_{\delta}:=\{ q \in Q|\, m_q (A_1) \geq \kappa/2 -\frac{\sqrt{8/\pi} \,\epsilon}{\delta}\}$, we get 
\begin{eqnarray*}
\mu(A_1)& \leq & \int_{Q_{\delta}^c} m_q(A_1)\, d\Q + \int_{Q_{\delta}} m_q(A_1)\, d\Q \\
          & < & (\kappa/2 -\frac{\sqrt{8/\pi} \epsilon}{\delta}) \Q(Q_{\delta}^c) + (\kappa/2 + \sqrt{8/\pi} \epsilon) ( 1 - \Q(Q_{\delta}^c)),
\end{eqnarray*}
from which we infer (since $\mu(A_1)\geq \kappa/2$)
$$ \Q(Q_{\delta}^c ) \leq \delta.$$
Applying \eqref{mqE} yields the result.

\end{proof}

 For a.e. $q \in Q_{\sqrt{\epsilon}}$, there exists a constant $\widehat{C}>0$ such that  $$\min\{m_q(A_1), m_q(A_2)\} \geq \kappa/2 -\widehat{C}\,\sqrt{\epsilon},$$
while, as explained in the proof above, for almost every $q$, 
$$d(A_1\cap X_q, A_2\cap X_q) \geq d(A_1,A_2) \geq  Sep((X_q,|\cdot|,m_q);\kappa/2) -2\epsilon.$$
Besides, using  \eqref{sigEst} and Remark \ref{EstSep}, we obtain
$$  Sep((X_q,|\cdot|,m_q);\kappa/2) -2\epsilon  \geq  Sep((X_q,|\cdot|,m_q);\kappa/2 -\widehat{C}\sqrt{\epsilon}) - C \sqrt{\epsilon}.$$ 

According to Lemma \ref{VarEsti1D}, this implies for a.e $q \in Q_{\sqrt{\epsilon}} $
the existence of a constant $C=C(\kappa)>0$ such that
\begin{eqnarray}\label{VarEstII}
Var_{m_q} (s) &\geq & 1 -C\epsilon^{1/22}, \nonumber \\
\mbox{and } \hspace{0,8cm} \Q(Q_{\sqrt{\epsilon}}) &\geq & 1 -\sqrt{\epsilon}.
\end{eqnarray}
%& \mbox{and} & \nonumber \\

\subsubsection{Proof of Theorem \ref{thm_dobs_to_sg}}

 The proof builds upon the decomposition relative to $f$ as in \eqref{ndf}. According to Theorem \ref{thm_needle_dec}, there exists a guiding function $u$ which is $1$-Lipschitz, and such that for a.e. $q \in Q$, $u(s)=s +c_q$ on $X_q$ (with $c_q \in \R)$. As a consequence, we have 
 $$ \int |\nabla u|^2 \, d\mu \leq 1,$$
 and
 \begin{eqnarray*}
  Var_{\mu} (u) & = & \int_X u^2 -\left(\int_X u \,d\mu\right)^2 \, d\mu \\
  				& \geq & \int_Q \int_{X_q} u^2 \, dm_q \,d\Q(q) - \int_Q \left(\int u\, dm_q\right)^2 \,d\Q(q) \\
  				& = & \int_Q Var_{m_q} (u)\, d\Q(q),
 \end{eqnarray*}
where the inequality follows from Cauchy-Schwarz. Using \eqref{VarEstII}, we then get
$$  Var_{\mu} (u) \geq \Q(Q_{\sqrt{\epsilon}}) \big(1 - C\epsilon^{1/22}\big) \geq 1 - C\epsilon^{1/22} \geq (1 - C\epsilon^{1/22})\int{|\nabla u|^2d\mu},$$
which gives the upper bound on the spectral gap.

\vspace{3mm}

\underline{\textbf{Acknowledgments}}: We thank Fabio Cavalletti, Thomas Courtade, Guido De Philippis, Bo'az Klartag, Michel Ledoux and Shin-Ichi Ohta for valuable discussions. M.F. was supported by ANR-11-LABX-0040-CIMI within the program ANR-11-IDEX-0002-02, as well as ANR projects EFI (ANR-17-CE40-0030) and MESA (ANR-18-CE40-006). J.B. was supported by ANR-17-CE40-0034. 

\bibliographystyle{plain}

\end{document}